\def\A{\mathcal{A}}
\def\M{\mathcal {M}}
\def\H{\mathcal {H}}
\def\G{\mathcal G}
\def\D{\mathbb D}
\def\F{\mathscr{F}}
\def\v{\overset\vee}
\newtheorem{theorem}{Theorem}[section]
\newtheorem{lemma}[theorem]{Lemma}
\newtheorem{proposition}[theorem]{Proposition}
\newtheorem{remark}[theorem]{Remark}
\newtheorem{example}[theorem]{Example}
\begin{document}

\title{A fibered description of the vector-valued spectrum}

\author[V. Dimant]{Ver\'onica Dimant}

\author[J. Singer]{Joaqu\'{\i}n Singer}

\thanks{Partially supported by Conicet PIP 11220130100483  and ANPCyT PICT 2015-2299 }

\subjclass[2010]{46J15, 46E50,  47B48, 32A38}
\keywords{spectrum, algebras of holomorphic functions, homomorphisms of algebras}

\address{Departamento de Matem\'{a}tica y Ciencias, Universidad de San
	Andr\'{e}s, Vito Dumas 284, (B1644BID) Victoria, Buenos Aires,
	Argentina and CONICET} \email{vero@udesa.edu.ar}

\address{Departamento de Matem\'{a}tica, Facultad de Ciencias Exactas y Naturales, Universidad de Buenos Aires, (1428) Buenos Aires,
	Argentina and IMAS-CONICET} \email{jsinger@dm.uba.ar}

\begin{abstract}
For Banach spaces $X$ and $Y$ we study the vector-valued spectrum $\M_\infty(B_X,B_Y)$, that is the set of non null algebra homomorphisms from $\H^\infty(B_X)$ to $\H^\infty(B_Y)$, which is naturally projected onto the closed unit ball of $\H^\infty(B_Y, X^{**})$. The aim of this article is to describe the fibers defined by this projection, searching for analytic balls and considering Gleason parts.
\end{abstract}

\maketitle

\section{Introduction}
Let  $\H^\infty(B_X)$ be the space of bounded holomorphic functions on the open unit ball of a complex Banach space $X$. The study of the spectrum of this uniform algebra $\M(\H^\infty(B_X))$ began with the seminal work of Aron, Cole and Gamelin \cite{AronColeGamelin} where this set was fibered over $\overline B_{X^{**}}$, the closed unit ball of the bidual of $X$. For $X$  infinite dimensional, unlike what happens in the one dimensional case,  it was proved in the same article that each fiber is quite large. Following this way the description of the fibers and the study of conditions that assure the existence of analytic balls inside the fibers was addressed in several articles, as \cite{ColeGamelinJohnson, Farmer, AronFalcoGarciaMaestre}.

Inspired by what is known about the spectrum $\M(\H^\infty(B_X))$ our aim here is to study, for Banach spaces $X$ and $Y$, the \textit{vector-valued spectrum} $\M_\infty(B_X,B_Y)$ defined by
\[
\M_\infty(B_X,B_Y)=\{ \Phi: \H^\infty(B_X)\to \H^\infty(B_Y) \textrm{ algebra homomorphisms}\}\setminus \{0\}.
\]

Even if  homomorphisms between uniform algebras are a typical object of study (see, for instance, \cite{Gamelin, GalindoGamelinLindstrom, GalindoLindstrom, GorkinMortini, AronGalindoLindstrom,MacCluerOhnoZhao}), the treatment of this set as a whole just started in \cite{DiGaMaSe}. Now, we continue that work with a slight change of perspective and focus but maintaining a  structure modeled on the ideas of \cite{AronColeGamelin}.

As was noticed in \cite{AronColeGamelin}, in order to obtain information about the spectrum of $\H^\infty(B_X)$ it is useful  to first study the spectrum of $\H_b(X)$, the Fr\'{e}chet algebra of holomorphic functions of bounded type on $X$ (that is, holomorphic functions which are bounded on bounded sets, with the topology of uniform convergence on bounded sets). The same idea leads our work here: with the goal of describing the vector-valued spectrum  $\M_\infty(B_X,B_Y)$ we begin by focusing on the set $\M_{b,\infty}(X,B_Y)$ given by
\[
\M_{b,\infty}(X,B_Y)=\{ \Phi: \H_b(X)\to \H^\infty(B_Y) \textrm{ continuous algebra homomorphisms}\}\setminus \{0\}.
\]

As in the scalar-valued case, this set has a rich analytic structure in the form of a Riemann domain over the space  $\mathcal{H}^{\infty}(B_Y,X^{**}) $, when $X$ is symmetrically regular (see  Section \ref{Section-Riemann domain}). The study of the fibers of $\M_{b,\infty}(X,B_Y)$ over $\mathcal{H}^{\infty}(B_Y,X^{**}) $ is developed in Section \ref{Section-Fiber Mb}. Following the Aron-Cole-Gamelin program, a radius function can be defined for the homomorphisms in $\M_{b,\infty}(X,B_Y)$ and then extended to $\M_\infty(B_X,B_Y)$ giving a way to relate both spectra. This is presented in  Section \ref{Section-Radius function}.

Each function $g\in \mathcal{H}^{\infty}(B_Y,X^{**}) $ with $g(B_Y)\subset B_{X^{**}}$ naturally produces a composition homomorphism $ C_g\in \M_\infty(B_X,B_Y)$ given by
\[
 C_g(f)=\widetilde f\circ g,\quad\textrm{ for all }f\in\H^\infty(B_X),
\] where $\widetilde f\in \H^\infty(B_{X^{**}})$ is the canonical extension  of $f$ (see reference below). Conversely, as in \cite{DiGaMaSe}, we can define a projection
\begin{align*}
			\xi \colon \M_\infty(B_X,B_Y) &\to \mathcal{H}^\infty(B_Y,X^{**}), \\
			\Phi & \mapsto \left[ y \mapsto [x^* \mapsto \Phi(x^*)(y)] \right].
		\end{align*}
The image of this projection is the closed unit ball of $\mathcal{H}^\infty(B_Y,X^{**})$ (see explanation in Section \ref{Section-Minf}).
One of the goals of this article is to describe the fibers over this closed ball, that is, for each $g\in \overline B_{\H^\infty(B_Y,X^{**})} $, the set of homomorphisms $\Phi\in\M_\infty(B_X,B_Y)$ such that $\xi(\Phi)=g$.

Finally, Section \ref{Section-Gleason parts} looks into the notion of Gleason parts for the vector-valued spectrum $\M_\infty(B_X,B_Y)$.

We begin by recalling some usual definitions and properties about polynomials and holomorphic functions in Banach spaces. For general theory on the topic we refer the reader to the books of Dineen \cite{Dineen}, Mujica \cite{MujicaLibro} and Chae \cite{Chae}.

Given Banach spaces $X$ and $Y$ we say that a function $P:X\to Y$ is a {\em continuous $m$-homogeneous polynomial} if there exists a unique continuous symmetric $m$-linear mapping $\v{P}$ such that $P(x) = \v{P}(x,\dots, x)$. If $U\subset X$ is an open set, a mapping $f:U\to Y$ is said to be {\em holomorphic} if for every $x_0\in U$ there exists a sequence $(P_mf(x_0))$, with each $P_mf(x_0)$ a continuous $m$-homogeneous polynomial, such that the series
\[
f(x)=\sum_{m=0}^\infty P_mf(x_0)(x-x_0)
\] converges uniformly in some neighborhood of $x_0$ contained in $U$.

We say that an $m$-homogeneous polynomial $P:X\to\mathbb C$ is {\em of finite type} if there are linear forms $x_1^*, \dots, x_n^*$ in $X^*$ such that
\[
P(x)=\sum_{k=1}^n (x_k^*(x))^m.
\]

The set
\[
\H^\infty(B_X)=\{f:B_X\to\mathbb C:\, f \textrm{ is holomorphic and bounded}\}
\] is a Banach algebra (endowed with the supremum norm). Analogously, the notation $\H^\infty(B_Y,X^{**})$ refers to the Banach space of bounded holomorphic functions from $B_Y$ to $X^{**}$ (endowed with the supremum norm).

A holomorphic function $f:X\to \mathbb C$ is said to be {\em of bounded type} if it maps bounded subsets of $X$ into bounded subsets of $\mathbb C$. The set
\[
\H_b(X)=\{f:X\to\mathbb C:\, f \textrm{ is a holomorphic function of bounded type}\}
\] is a Fr\'{e}chet algebra if we endow it with the family of (semi)norms $\{\sup_{\|x\|<R}|f(x)|\}_{R>0}$.

By \cite{AronBerner, DavieGamelin}, there is a canonical extension $[f\mapsto \widetilde f]$  from $\H^\infty(B_X)$ to $\H^\infty(B_{X^{**}})$ which is an isometry and  a homomorphism of Banach algebras. The extension is also defined from $\H_b(X)$ to $\H_b(X^{**})$  and satisfies, for any $f\in\H_b(X)$ and  $R>0$,
\[
\|f\|_{RB_X} = \|\widetilde f\|_{RB_{X^{**}}}.
\]

Recall that a Banach space $X$ is said to be {\em symmetrically regular} if  every continuous  linear mapping $T:X\to X^*$ which is symmetric (i. e. $T(x_1)(x_2)=T(x_2)(x_1)$ for all $x_1, x_2\in X$) turns out to be weakly compact.

The (scalar-valued) spectrum of a Banach or Fr\'{e}chet algebra $\mathcal A$ is the set
\[
\M(\A)=\{\varphi:\A\to\mathbb C \textrm{ algebra homomorphisms}\}\setminus \{0\}.
\]

As in \cite{AronColeGamelin}, we will denote the scalar-valued spectrum of $\H_b(X)$ by $\M_b(X)$. Analogously, $\M_\infty(B_X)$ will denote $\M(\H^\infty(B_X))$.

\textbf{Duality and compactness.} We quote some observations about duality and compactness for future reference.
\begin{enumerate}[(1)]
	\item As mentioned in \cite{GGL}, the algebra $\H^\infty (B_Y)$ is a weak-star closed subalgebra of $\ell_\infty(B_Y)$ and hence it is the dual of the quotient Banach space $\ell_1(B_Y)/\H^\infty(B_Y)^\perp$. We chose to denote this predual space by $\mathcal G^\infty (B_Y)$, as in \cite{Mujica}, for simplicity but we recall its description as a quotient of $\ell_1(B_Y)$.
	\item \label{compact} The vector-valued spectrum $\M_\infty(B_X,B_Y)$ is a subset of the unit sphere of
	\[\mathcal L(\H^\infty(B_X), \H^\infty(B_Y)) = \left(\H^\infty(B_X)\widehat\otimes_\pi \mathcal G^\infty (B_Y)\right)^*.\] As the closed unit ball of a dual Banach space is weak-star compact and $\M_\infty(B_X,B_Y)$ is closed with respect to this topology, it follows that $\M_\infty(B_X,B_Y)$ is a weak-star compact set (i. e. a compact set with respect to the topology given by $\H^\infty(B_X)\widehat\otimes_\pi \mathcal G^\infty (B_Y)$). Note also that, by the definition of $\mathcal G^\infty (B_Y)$,  a bounded net $(\Phi_\alpha)$  is weak-star convergent to $\Phi$ in $\left(\H^\infty(B_X)\widehat\otimes_\pi \mathcal G^\infty (B_Y)\right)^*$ if and only if $\Phi_\alpha (f)(y)\to \Phi(f)(y)$, for all $f\in \H^\infty(B_X)$ and $y\in B_Y$. The weak-star compactness of $\M_\infty(B_X,B_Y)$ is  shown with a different argument in \cite[Theorem 11]{DiGaMaSe}.
	\item \label{comment3} For each $R>0$, the norm $\|f\|_R=\sup\{|f(x)|:\ x\in RB_X\}$ gives to $\H_b(X)$ the structure of a normed space (not complete). In this way the set
	\[
	\M_{b,\infty}(X,B_Y)_R =\{\Phi\in \M_{b,\infty}(X,B_Y):\ \Phi\textrm{ is continuous with respect to the norm } \|\cdot\|_R\}
	\]
	is contained in the unit sphere of
	\[\mathcal L((\H_b(X),\|\cdot\|_R), \H^\infty(B_Y)) = \left((\H_b(X),\|\cdot\|_R)\otimes_\pi \mathcal G^\infty (B_Y)\right)^*.\]
	Arguing as in the previous item, for a given $R>0$, the set $\M_{b,\infty}(X,B_Y)_R$ is weak-star compact; or equivalently, it is compact with respect to the topology given by
	\[
	\Phi_\alpha\to \Phi \quad\textrm{ whenever } \quad \Phi_\alpha (f)(y)\to \Phi(f)(y), \textrm{  for all } f\in \H_b(X) \textrm{ and } y\in B_Y.
	\]
\end{enumerate}

\section{Riemann domain over $ \mathcal{H}^{\infty}(B_Y,X^{**}) $}\label{Section-Riemann domain}

For a symmetrically regular Banach space $X$ it is shown in \cite{DiGaMaSe} that $\mathcal{M}_{b,\infty}(X,B_Y)$ can be endowed with a structure of a Riemann domain over $\mathcal L(X^*, Y^*)$. Now, to achieve a fibered description of $\M_\infty(B_X,B_Y)$ we find it more suitable to define a Riemann domain structure over $ \mathcal{H}^{\infty}(B_Y,X^{**}) $ as opposed to $\mathcal L(X^*, Y^*)$. In this way we are choosing  a more complex underlying space but we are dealing with a simpler projection  and the behaviour of the fibers shows more akin to what happens in the scalar-valued case.

As in \cite[Equation (81)]{DiGaMaSe}, for each $g \in \mathcal{H}^{\infty}(B_Y,X^{**}) $ there is an associated composition homomorphism $  C_g \in \mathcal{M}_{b,\infty}(X,B_Y) $ given by
	\[  C_g (f) (y) = \widetilde{f} \circ g(y),\]
	where $\widetilde{f}$ denotes the canonical extension of $f$. It is easily verified that $ C_g$ is well defined and that gives an inclusion
		\begin{align*}
			j:\mathcal{H}^{\infty}(B_Y,X^{**}) &\to \mathcal{M}_{b,\infty}(X,B_Y), \\
			g &\mapsto  C_g.
		\end{align*}
	
Also, as in \cite[Equation (83)]{DiGaMaSe}, there is a projection
		\begin{align*}
			\xi \colon \mathcal{M}_{b,\infty}(X,B_Y) &\to \mathcal{H}^\infty(B_Y,X^{**}), \\
			\Phi & \mapsto \left[ y \mapsto [x^* \mapsto \Phi(x^*)(y)] \right].
		\end{align*}
The fact that this mapping is well-defined can be seen as follows: in order to prove that $\xi(\Phi):B_Y \to X^{**}$ is holomorphic,  it is enough to check that it is weak-star holomorphic \cite[Exercise 8.D]{MujicaLibro}. This is true since, for each $x^*\in X^*$,  we have that
	\begin{align*}
		 \widetilde{x^*}(\xi(\Phi)) = \Phi(x^*) \in \mathcal{H}^\infty(B_Y).
	\end{align*}
 To see that  it is bounded recall that as $\Phi$ belongs to $\mathcal{M}_{b,\infty}(X,B_Y)$, there exists $r>0$ such that $\|\Phi(h)\|\le \|h\|_{rB_X}$, for all $h\in\H_b(X)$. Therefore,
		\begin{align*}
			\sup_{y\in B_Y} \|\xi(\Phi)(y)\| &= \sup_{y\in B_Y} \sup_{\|x^*\| \le 1} |\xi(\Phi)(y)(x^*)| \\
			&=\sup_{\|x^*\| \le 1} \sup_{y\in B_Y} |\Phi(x^*)(y)|\\
			 &\leq \sup_{\|x^*\| \le 1}\|x^*\|_{rB_X} \leq r.
		\end{align*}
Thus, we conclude that $\xi(\Phi)$ belongs to $\mathcal{H}^\infty(B_Y,X^{**})$ (and $\Phi$ is well-defined). Note also that $\xi(j(g))=g$, for all $g \in \mathcal{H}^{\infty}(B_Y,X^{**}) $.

Now, the construction of the Riemann domain structure is analogous to  what was done in the scalar-valued case \cite{AronColeGamelin, AronGalindoGarciaMaestre, Dineen} and in the already mentioned vector-valued case \cite{DiGaMaSe}. Anyway, we present it adapted to our particular setting for future reference throughout the article.
	
For $x^{**} \in X^{**}$,  $\tau_{x^{**}}$ is the translation mapping given by $\tau_{x^{**}} (x) = J_Xx +x^{**}$. This induces, as usual, a mapping $\tau^*_{x^{**}} \colon \mathcal{H}_b(X) \to \mathcal{H}_b(X)$ where $\tau^*_{x^{**}} (f) (x) = \widetilde{f}(J_Xx +x^{**})$. By \cite[Proposition 6.30]{Dineen} for any fixed $f \in \mathcal{H}_b(X)$, we have that the mapping $[x^{**}\mapsto \tau_{x^{**}}^*(f)]$ is a holomorphic function of bounded type from $X^{**}$ into $\mathcal{H}_b(X)$. Then, we can define for each $g \in \mathcal{H}^\infty(B_Y,X^{**})$ and $\Phi \in \mathcal{M}_{b,\infty}(X,B_Y)$ the homomorphism $\Phi^{g}$ in $\mathcal{M}_{b,\infty}(X,B_Y)$ by
		\begin{align*}
			\Phi^g(f)(y) &= \Phi(\tau^*_{g(y)}(f)) (y) \\
			&=\Phi [x \mapsto \widetilde{f}(J_Xx+g(y)) ] (y),
		\end{align*}
	for all $f \in \mathcal{H}_b(X)$ and $y \in B_Y$. In order to see that $\Phi^g$ is well-defined we need  to check that $\Phi^g(f)$ belongs to $\H^\infty (B_Y)$, for every $f \in \mathcal{H}_b(X)$.
%that the function
%	\begin{equation} \label{funcion H infinito}
% y \mapsto \Phi(\tau^*_{g(y)}(f)) (y),
% \end{equation}
	 To derive that $\Phi^g(f)$ is holomorphic, we consider the following function of two variables
		\begin{align*}
			B_Y \times B_Y &\to \mathbb{C} \\
			(y,z) &\mapsto \Phi (\tau^*_{g(z)}(f)) (y).
		\end{align*}
Clearly, the mapping is holomorphic in the first variable. The same is true for the second one, as it is the result of applying $\delta_y \circ \Phi$ to the composition of $[x^{**}\mapsto \tau_{x^{**}}^*(f)]$ with $[y \mapsto g(y)]$. By Hartogs' theorem, this mapping is holomorphic when considering both variables simultaneously and thus it remains so when restricted to the set $\{(y,y):\, y\in B_Y\}$. This gives that the mapping $\Phi^g(f)$ is holomorphic. As before, recall that there exists $r>0$ such that $\|\Phi(h)\|\le \|h\|_{rB_X}$, for all $h\in\H_b(X)$, implying that $\Phi^g(f)$ is bounded:	
		\begin{align}
			\sup_{y\in B_Y} |\Phi^g(f)(y)| &= \sup_{y\in B_Y} |\Phi(\tau^*_{g(y)}(f)) (y)|  \leq \sup_{ y,z\in B_Y  } |\Phi(\tau^*_{g(z)}(f))(y)| \nonumber \\
			 & = \sup_{z\in B_Y} \|\Phi(\tau^*_{g(z)}(f))\|_{B_Y}\leq \sup_{z\in B_Y}\|\tau^*_{g(z)}(f)\|_{rB_X} \label{bounded} \\
			&= \sup_{\substack{z\in B_Y \\ x\in rB_X}} |\widetilde{f}(g(z)+J_Xx)| \leq \|f\|_{(\|g\|+r)B_X} < \infty. \nonumber
		\end{align}
This shows that $\Phi^g$ is well defined. An easy computation shows that $\Phi^g$ is an algebra homomorphism from which we obtain that $\Phi^g \in \mathcal{M}_{b,\infty}(X,B_Y)$.
	
It is also worth noting that the projection $\xi$ satisfies
\begin{equation} \label{xi-phig} \xi(\Phi^g) = \xi(\Phi) + g.
\end{equation}
Indeed, this is clear since for all $y\in B_Y$ and all $x^*\in X^*$, $\tau_{g(y)}^*(x^*)=x^*+g(y)(x^*)$.

We now arrive at the statement of the Riemann domain structure on $\mathcal{M}_{b,\infty}(X,B_Y)$.

		\begin{proposition}
			If $X$ is a symmetrically regular Banach space and $Y$ is any Banach space, $(\mathcal{M}_{b,\infty}(X,B_Y),\xi)$ is a Riemann domain over $\mathcal{H}^\infty(B_Y,X^{**})$ with each connected component homeomorphic to $\mathcal{H}^\infty(B_Y,X^{**})$.
		\end{proposition}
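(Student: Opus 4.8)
The plan is to realize $\mathcal{H}^\infty(B_Y,X^{**})$, viewed as an additive group, as acting freely on $\mathcal{M}_{b,\infty}(X,B_Y)$ through the assignment $(g,\Phi)\mapsto\Phi^g$ constructed above, and to extract the Riemann domain structure from this action together with the equivariance identity $\xi(\Phi^g)=\xi(\Phi)+g$ recorded in \eqref{xi-phig}. Well-definedness of the action into $\mathcal{M}_{b,\infty}(X,B_Y)$ has already been established, so the first task is to verify the group axioms: $\Phi^0=\Phi$, which is immediate since $\tau^*_0$ is the identity on $\mathcal{H}_b(X)$, and the composition law $(\Phi^g)^h=\Phi^{g+h}$. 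Unwinding the definitions pointwise in $y$, the latter reduces to the operator identity $\tau^*_a\circ\tau^*_b=\tau^*_{a+b}$ on $\mathcal{H}_b(X)$ for $a,b\in X^{**}$, which in turn rests on the extension identity $\widetilde{\tau^*_b(f)}(w)=\widetilde f(w+b)$ for all $f\in\mathcal{H}_b(X)$ and $w\in X^{**}$. This is precisely where the symmetric regularity of $X$ is indispensable, since it guarantees that the canonical extension commutes with translations in the bidual; I expect this to be the main obstacle, and I would deduce it from the scalar-valued theory of \cite{Dineen, AronColeGamelin}, applied to each $\tau^*_{g(y)}(f)$ and then composed with $\delta_y\circ\Phi$.

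Granting the action, the identity \eqref{xi-phig} does the rest of the work. It shows the action is free (from $\Phi^g=\Phi$ one gets $g=0$) and that for each $\Phi$ the orbit map $g\mapsto\Phi^g$ is injective with $\xi(\Phi^g)=\xi(\Phi)+g$ sweeping out all of $\mathcal{H}^\infty(B_Y,X^{**})$. Consequently the map $\sigma_\Phi(h):=\Phi^{\,h-\xi(\Phi)}$ is a global section of $\xi$, satisfying $\xi\circ\sigma_\Phi=\mathrm{id}$ and $\sigma_\Phi(\xi(\Phi))=\Phi$. A short computation with the action law shows that $\Phi'=\Phi^{g_0}$ yields $\sigma_{\Phi'}=\sigma_\Phi$; hence the images $\sigma_\Phi\big(\mathcal{H}^\infty(B_Y,X^{**})\big)$ are exactly the orbits of the action, and any two of them are either equal or disjoint.

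I would then topologize $\mathcal{M}_{b,\infty}(X,B_Y)$ by taking the sets $\sigma_\Phi(V)$, with $V$ open in $\mathcal{H}^\infty(B_Y,X^{**})$, as a base, which makes each $\sigma_\Phi$ a homeomorphism onto its (open) orbit. With this topology $\xi$ restricts to a homeomorphism on every orbit, so it is a local homeomorphism; the space is Hausdorff because two points in a common orbit are separated by $\xi$ (the base being Hausdorff), while points in distinct orbits lie in disjoint basic neighborhoods. This is the asserted Riemann domain structure. Finally, since $\mathcal{H}^\infty(B_Y,X^{**})$ is a Banach space and hence connected, each orbit is connected (being a homeomorphic image of it) and simultaneously open and closed (its complement being a union of the remaining orbits); therefore the orbits are precisely the connected components, each homeomorphic to $\mathcal{H}^\infty(B_Y,X^{**})$ via $\sigma_\Phi$, as claimed. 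As a final observation I would relate this chart topology to the pointwise weak-star convergence $\Phi_\alpha(f)(y)\to\Phi(f)(y)$ discussed above: since $g_\alpha\to g$ in $\mathcal{H}^\infty(B_Y,X^{**})$ forces $\Phi^{g_\alpha}(f)(y)\to\Phi^g(f)(y)$ for every $f$ and $y$, the orbit maps are weak-star continuous and the chart topology is at least as fine as the weak-star one.
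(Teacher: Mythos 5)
Your proposal is correct and takes essentially the same route as the paper: both arguments rest on the composition law $(\Phi^g)^h=\Phi^{g+h}$, which is exactly where symmetric regularity enters via \cite[Lemma 6.28]{Dineen}, together with the identity $\xi(\Phi^g)=\xi(\Phi)+g$, and both build the topology from the sets $\{\Phi^g:\ \|g\|<\varepsilon\}$ (your basic sets $\sigma_\Phi(V)$ are the same charts), identifying the orbits $V_\Phi=\{\Phi^g:\ g\in\mathcal{H}^\infty(B_Y,X^{**})\}$ as the connected components. The group-action and global-section language is a tidy but cosmetic repackaging of the paper's neighborhood-basis construction, not a different proof.
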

		\begin{proof}
			For $\Phi \in \M_{b,\infty}(X,B_Y)$ and $\varepsilon > 0$, consider the sets
				\[ V_{\Phi,\varepsilon} = \{ \Phi^g \colon g\in \mathcal{H}^\infty(B_Y,X^{**}),\ \|g\| < \varepsilon \}. \]
		These sets form a neighborhood basis for a Hausdorff topology in $\mathcal{M}_{b,\infty}(X,B_Y)$.
		First of all, given $\Psi \in V_{\Phi,\varepsilon}$ we have that $\Psi = \Phi^g$ for a certain $g$ with  $\|g\| < \varepsilon$. Since $X$ is symmetrically regular, by \cite[Lemma 6.28]{Dineen}, we have that $\tau^*_{g(y)} \circ \tau^*_{h(y)}=\tau^*_{(g+h)(y)}$ for all $y\in B_Y$, $h \in \mathcal{H}^\infty (B_Y,X^{**})$. It follows that
			\begin{align*}
				\Psi^h(f)(y) &= (\Phi^g)^h(f)(y) = \Phi^g(\tau^*_{h(y)} (f)) (y)= \Phi (\tau^*_{g(y)} \circ \tau^*_{h(y)} (f)) (y) \\
				& = \Phi(\tau^*_{(g+h)(y)} (f)) (y)= \Phi^{g+h}(f) (y).
			\end{align*}
		Therefore, for $\delta = \varepsilon - \|g\|$ we have that $V_{\Psi,\delta} \subset V_{\Phi, \varepsilon}$.
		That this is in fact  a Hausdorff topology follows as usual: given $\Psi \not= \Phi \in \mathcal{M}_{b,\infty}(B_Y,X^{**})$ there are two possibilities, either $\xi(\Psi) = \xi(\Phi)$ or $\xi(\Psi) \not = \xi(\Phi)$. In the former case, a simple argument using \eqref{xi-phig} implies that $V_{\Phi,\varepsilon} \cap V_{\Psi,\varepsilon'} = \varnothing$ for every $\varepsilon, \varepsilon' > 0$. If, otherwise, $\xi(\Phi) \not = \xi (\Psi)$, the easily obtained conclusion is that $V_{\Phi,\varepsilon} \cap V_{\Psi,\varepsilon} = \varnothing$ for $ \varepsilon = \frac{\|\xi(\Phi) - \xi(\Psi)\|}{2}$.
%In the former case, we will see that $V_{\Phi,\varepsilon} \cap V_{\Psi,\varepsilon'} = \varnothing$ for every $\varepsilon, \varepsilon' > 0$. Assume that this is not the case and take $\eta$ in the intersection $V_{\Phi,\varepsilon}\cap V_{\Psi,\varepsilon'}$. We then have $g,h \in \mathcal{H}^\infty(B_Y,X^{**})$ such that
%			\[ \eta = \Phi^g = \Psi^h. \]
%		In particular we get that
%			\begin{align*}
%				\xi(\eta) = \xi(\Phi^g) = \xi(\Psi^h),
%				\intertext{therefore, by \eqref{xi-phig},}
%				\xi(\eta) = \xi(\Phi) + g = \xi(\Psi) + h.				
%			\end{align*}
%		Now since $\xi(\Phi) = \xi(\Psi)$ we get that $g = h$ and by taking
%			 $\Phi = \eta^{-g}=\eta^{-h} = \Psi,  $
%		we reach a contradiction, from which we can conclude that $V_{\Phi,\varepsilon}\cap V_{\Psi,\varepsilon'}=\varnothing$.
%		In the case $\xi(\Phi) \not = \xi (\Psi)$ we consider
%			$ \varepsilon = \frac{\|\xi(\Phi) - \xi(\Psi)\|}{2}. $
%		If $\eta = \Phi^g = \Psi^h$ then $ \xi(\eta) = \xi(\Phi) + g = \xi(\Psi) + h$, from which we get that
%			\[  \|h - g \| = \| \xi(\Phi) - \xi(\Phi)\| = 2 \varepsilon. \]
%		So we conclude that $V_{\Phi,\varepsilon} \cap V_{\Psi,\varepsilon} = \varnothing$.  Therefore, these sets indeed define a Hausdorff topology.
		Additionally, note that if we consider, for $\Phi \in \mathcal{M}_{b,\infty}(X,B_Y)$, the set
			\[ V_\Phi = \cup_{\varepsilon > 0} V_{\Phi,\varepsilon}=\{\Phi^g \colon g\in \mathcal{H}^\infty(B_Y,X^{**})\}, \]
		we obtain exactly the connected component of $\Phi$, which is homeomorphic to $\H^\infty(B_Y,X^{**})$ as stated.
		\end{proof}
		
		As in \cite[Proposition 10]{DiGaMaSe} each function $f \in \mathcal{H}_b(X)$ can be extended to a function on $\mathcal{M}_{b,\infty}(X,B_Y)$ by means of a sort of Gelfand transform:
			\begin{align*}
				\widehat{f} \colon \mathcal{M}_{b,\infty}(X,B_Y) &\to \mathcal{H}^\infty(B_Y) \\
				\Phi &\mapsto \Phi (f),
			\end{align*}
		and  this function, when restricted to each connected component is a holomorphic function of bounded type. Even though the connected components here are not the same as those in \cite{DiGaMaSe}, the proof developed in that paper works in our context with slight modifications. However, we choose to present here another simpler argument.
		
		\begin{proposition} \label{gelfand-transform}
			Let $X$ be a symmetrically regular Banach space and let $Y$ be any Banach space. Given a function $f \in \mathcal{H}_b(X)$ we have that the extension $\widehat{f}\colon \mathcal{M}_{b,\infty}(X,B_Y) \to \mathcal{H}^\infty(B_Y)$ is a holomorphic function of bounded type, when restricted to each connected component of the spectrum. That is, $\widehat{f} \circ (\xi|_{V_\Phi})^{-1} \in \mathcal{H}_b(\mathcal{H}^\infty(B_Y,X^{**}),\mathcal{H}^\infty(B_Y))$ for every $\Phi\in\M_{b,\infty}(X, B_Y)$.
		\end{proposition}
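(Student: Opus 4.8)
The plan is to transport the whole problem to the model space $\mathcal{H}^\infty(B_Y,X^{**})$ via the chart $\xi|_{V_\Phi}$, whose inverse sends $g \mapsto \Phi^g$ by the Riemann domain proposition. Thus the map to analyze is $F \colon \mathcal{H}^\infty(B_Y,X^{**}) \to \mathcal{H}^\infty(B_Y)$ given by $F(g) = \widehat{f}(\Phi^g) = \Phi^g(f)$, and I want to show $F \in \mathcal{H}_b(\mathcal{H}^\infty(B_Y,X^{**}),\mathcal{H}^\infty(B_Y))$. Unwinding the definition of $\Phi^g$, for each fixed $g$ we have $F(g)(y) = \Phi(\tau^*_{g(y)}(f))(y) = \Phi[x \mapsto \widetilde{f}(J_Xx + g(y))](y)$. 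The first thing I would record is boundedness of $F$ on bounded sets: the estimate \eqref{bounded} already shows $\|F(g)\|_{B_Y} = \|\Phi^g(f)\|_{B_Y} \le \|f\|_{(\|g\|+r)B_X}$, and since $f \in \mathcal{H}_b(X)$ is bounded on every ball, this is finite and uniformly bounded as $g$ ranges over any bounded subset of $\mathcal{H}^\infty(B_Y,X^{**})$. So bounded type will follow once holomorphy is established.

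For holomorphy I would exploit the Aron–Berner/translation machinery already in play. The key fact, cited from \cite[Proposition 6.30]{Dineen}, is that $[x^{**} \mapsto \tau^*_{x^{**}}(f)]$ is a holomorphic map of bounded type from $X^{**}$ into $\mathcal{H}_b(X)$; write its monomial (Taylor) expansion at the origin as $\tau^*_{x^{**}}(f) = \sum_{m=0}^\infty Q_m(x^{**})$ with $Q_m \colon X^{**} \to \mathcal{H}_b(X)$ a continuous $m$-homogeneous polynomial. Substituting $x^{**} = g(y)$ and applying $\Phi$ pointwise, I expect the Taylor series of $F$ at $g=0$ to be built from the maps $g \mapsto [y \mapsto \Phi(Q_m(g(y)))(y)]$. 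To make this rigorous I would proceed directly: fix $g_0$ and $g$, and study $\lambda \mapsto F(g_0 + \lambda g)$ for $\lambda$ in a disc of $\mathbb{C}$. For each fixed $y$, the scalar function $\lambda \mapsto F(g_0+\lambda g)(y) = \Phi(\tau^*_{(g_0+\lambda g)(y)}(f))(y)$ is holomorphic in $\lambda$, being the composition of the entire $\mathcal{H}_b(X)$-valued map $\lambda \mapsto \tau^*_{(g_0+\lambda g)(y)}(f)$ with the continuous linear functional $\delta_y \circ \Phi$. Combined with the local uniform boundedness from the previous paragraph, this Gâteaux-holomorphy plus local boundedness gives Fréchet holomorphy of $F$ as a map into $\mathcal{H}^\infty(B_Y)$, by the standard criterion \cite[Exercise 8.D]{MujicaLibro} or the analogous result that a locally bounded Gâteaux-holomorphic map between Banach spaces is holomorphic.

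The one subtlety I expect to be the main obstacle is verifying that $F$ genuinely takes values in $\mathcal{H}^\infty(B_Y)$ rather than merely in $\ell_\infty(B_Y)$, i.e.\ that each $F(g)$ is holomorphic in $y$; but this is exactly what was checked in the well-definedness of $\Phi^g$ above using Hartogs' theorem on $B_Y \times B_Y$, so I can quote that verbatim. The remaining delicate point is that the Gâteaux derivatives must be shown to depend holomorphically as maps into the \emph{Banach space} $\mathcal{H}^\infty(B_Y)$, not just pointwise in $y$; here the uniform estimate \eqref{bounded}, applied to the translated function and to $g_0 + \lambda g$ with $\|\lambda\|$ bounded, controls the $\mathcal{H}^\infty(B_Y)$-norm of the difference quotients uniformly, which is precisely the ingredient that upgrades pointwise holomorphy to norm-holomorphy. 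Once $F$ is holomorphic and bounded on bounded sets, I conclude $F = \widehat{f}\circ(\xi|_{V_\Phi})^{-1} \in \mathcal{H}_b(\mathcal{H}^\infty(B_Y,X^{**}),\mathcal{H}^\infty(B_Y))$, which is the assertion.
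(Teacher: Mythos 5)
Your proposal is correct, and its overall decomposition coincides with the paper's: bounded type comes from the estimate \eqref{bounded}, pointwise-in-$y$ holomorphy comes from writing $g\mapsto \Phi^g(f)(y)$ as the composition of the continuous linear evaluation $g\mapsto g(y)$ with $x^{**}\mapsto \Phi(\tau^*_{x^{**}}(f))(y)$ (holomorphic by \cite[Proposition 6.30]{Dineen} followed by $\delta_y\circ\Phi$), and the remaining work is to upgrade pointwise holomorphy to norm holomorphy using local boundedness. Where you genuinely diverge is in that upgrade. The paper isolates it as Lemma \ref{holomorphic mapping}: a locally bounded map into $\H^\infty(B_Y)$ is holomorphic as soon as all the compositions with evaluations $\delta_y$ are, proved via duality --- every $v$ in the predual $\G^\infty(B_Y)=\ell_1(B_Y)/\H^\infty(B_Y)^\perp$ is an $\ell_1$-sum $\sum_k\lambda_k\delta_{y_k}$, the partial sums $x\mapsto \sum_{k\le n}\lambda_k F(x)(y_k)$ are holomorphic and locally uniformly bounded, and \cite[Theorem 14.16]{Chae} passes to the limit. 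You instead argue directly: Gâteaux holomorphy of $F$ into the Banach space $\H^\infty(B_Y)$ is extracted from the pointwise holomorphy of $\lambda\mapsto F(g_0+\lambda g)(y)$ together with a Cauchy-integral estimate that controls the difference quotients uniformly in $y$, and then the standard fact that locally bounded Gâteaux-holomorphic maps are holomorphic finishes. This works and is in effect a self-contained reproof of the lemma in the special case needed, avoiding the predual machinery entirely; the one point you should state explicitly is that the norm-limits of the difference quotients remain in $\H^\infty(B_Y)$, which holds because each difference quotient lies in $\H^\infty(B_Y)$ (by the Hartogs verification you quote) and $\H^\infty(B_Y)$ is sup-norm closed in $\ell_\infty(B_Y)$. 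What the paper's packaging buys is reusability --- the same lemma is invoked again in Theorem \ref{Prop:constant_function} and in the Gleason-part section --- while your route is more elementary at this single spot; your opening digression on the Taylor expansion via the polynomials $Q_m$ is harmless but ultimately unused.
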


The proof follows readily by using the following lemma, which is surely known. We include the proof as we could not find a proper reference.

\begin{lemma} \label{holomorphic mapping}
Given Banach spaces $X$ and $Y$ and an open set $U\subset X$, let $F:U\to \mathcal{H}^\infty(B_Y)$ be a locally bounded mapping. Then, $F$ is holomorphic if and only if $\delta_y\circ F$ is holomorphic, for all $y\in B_Y$.

\end{lemma}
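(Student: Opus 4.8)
The plan is to prove the nontrivial implication, since the forward direction is immediate: if $F$ is holomorphic, then $\delta_y\circ F$ is the composition of $F$ with the continuous linear functional $\delta_y\in\H^\infty(B_Y)^*$ (note $\|\delta_y\|\le 1$), hence holomorphic. For the converse I would reduce the problem to one complex variable and then invoke the standard fact that a locally bounded, G\^ateaux-holomorphic mapping is holomorphic \cite{MujicaLibro}. Concretely, fix $x_0\in U$ and $x\in X$, and choose $s>0$ so small that the segment $\{x_0+\lambda x:\ |\lambda|\le s\}$ lies in $U$ and $F$ is bounded there, say $\|F(x_0+\lambda x)\|\le M$ for $|\lambda|\le s$ (possible by local boundedness). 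Writing $g(\lambda)=F(x_0+\lambda x)$, it suffices to show that $g\colon\{|\lambda|<s\}\to\H^\infty(B_Y)$ is holomorphic, since this yields the G\^ateaux holomorphy of $F$.

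The core of the argument is to reconstruct $g$ from its scalar slices. For each fixed $y\in B_Y$ the function $\lambda\mapsto g(\lambda)(y)=(\delta_y\circ F)(x_0+\lambda x)$ is holomorphic and bounded by $M$ on $|\lambda|\le s$, so it admits a Taylor expansion $g(\lambda)(y)=\sum_{n\ge 0}c_n(y)\lambda^n$, where
\[
c_n(y)=\frac{1}{2\pi i}\int_{|\zeta|=s}\frac{g(\zeta)(y)}{\zeta^{n+1}}\,d\zeta,
\]
and the Cauchy estimate gives $|c_n(y)|\le M/s^{n}$ uniformly in $y$. The idea is then to check that each $c_n$ is itself an element of $\H^\infty(B_Y)$, so that $\sum_n c_n\lambda^n$ is a norm-convergent power series in $\H^\infty(B_Y)$ for $|\lambda|<s$ (its radius of convergence is at least $s$ because $\|c_n\|_\infty\le M/s^n$). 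Finally, since evaluations separate points of $\H^\infty(B_Y)$ and norm convergence implies pointwise convergence, the sum $\sum_n c_n\lambda^n$ must coincide with $g(\lambda)$; this exhibits $g$ as a norm-convergent power series, that is, as a holomorphic $\H^\infty(B_Y)$-valued function.

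The main obstacle is precisely the verification that $c_n\in\H^\infty(B_Y)$, i.e.\ that $y\mapsto c_n(y)$ is holomorphic, its boundedness being clear from the estimate above. For fixed $\zeta$ the integrand $y\mapsto g(\zeta)(y)$ is holomorphic, being the evaluation of $g(\zeta)\in\H^\infty(B_Y)$, and it is bounded by $M$ uniformly in $\zeta$; holomorphy of the integral in $y$ then follows by a routine Fubini--Morera argument along complex lines in $Y$, using that $(\zeta,y)\mapsto g(\zeta)(y)$ is separately holomorphic and bounded. Once all the $c_n$ are known to lie in $\H^\infty(B_Y)$, the assembly described above shows $g$ is holomorphic, hence $F$ is G\^ateaux holomorphic; being also locally bounded, $F$ is holomorphic by the standard criterion \cite{MujicaLibro}. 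Alternatively, one may phrase the same argument structurally: since $\H^\infty(B_Y)=\G^\infty(B_Y)^*$ and the evaluations $\{\delta_y\}$ span a dense subspace of the predual $\G^\infty(B_Y)$, local boundedness upgrades the hypothesis to weak-star holomorphy against all of $\G^\infty(B_Y)$, and the weak-star Cauchy integrals then produce the coefficients $c_n$ directly in $\H^\infty(B_Y)$.
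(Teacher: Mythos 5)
Your proof is correct, but it takes a genuinely different route from the paper's. The paper works directly with the predual: it represents an arbitrary $v\in\mathcal{G}^\infty(B_Y)=\ell_1(B_Y)/\mathcal{H}^\infty(B_Y)^\perp$ as an $\ell_1$-sum $\sum_k \lambda_k\delta_{y_k}$, observes that the partial sums $x\mapsto\sum_{k=1}^n\lambda_k F(x)(y_k)$ are holomorphic by hypothesis and locally uniformly bounded by $C\|(\lambda_k)\|_{\ell_1}$, and invokes a Weierstrass-type convergence theorem (\cite[Theorem 14.16]{Chae}) to conclude that $x\mapsto F(x)(v)$ is holomorphic for every $v$ in the predual, from which holomorphy of the dual-valued map $F$ follows. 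You instead build the analyticity by hand on each complex line: you extract the Taylor coefficients $c_n$ of the scalar slices via Cauchy's formula, verify that $c_n\in\mathcal{H}^\infty(B_Y)$ (your Fubini--Morera step, which incidentally can be streamlined by applying Hartogs' theorem to the separately holomorphic function $(\zeta,\mu)\mapsto F(x_0+\zeta x)(y_0+\mu w)$ --- the same two-variable trick the paper uses in Section 2), and use the uniform Cauchy estimates $\|c_n\|\le M/s^n$ to obtain a norm-convergent power series that the point evaluations identify with $g$; G\^ateaux holomorphy plus local boundedness then finishes via the standard criterion in \cite{MujicaLibro}. What each approach buys: the paper's argument is shorter but leans on the quotient description of $\mathcal{G}^\infty(B_Y)$ and the cited convergence theorem; yours is more self-contained, needing only classical one-variable Cauchy theory and the locally-bounded-plus-G\^ateaux criterion, and it never has to test holomorphy against the full predual. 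Two small caveats: choose $s$ so that $F$ is bounded on a neighborhood of the closed disk $\{x_0+\lambda x:\ |\lambda|\le s\}$, so that Cauchy's formula on $|\zeta|=s$ applies to a function holomorphic past the contour (immediate from local boundedness, as you indicate); and note that your closing ``structural'' paragraph is essentially a compressed version of the paper's proof which, as stated, assumes the very point at issue --- upgrading holomorphy from the evaluations to all of $\mathcal{G}^\infty(B_Y)$ is exactly where the paper needs the locally bounded limit theorem --- so it should be read as a remark, with your Cauchy-coefficient argument standing as the actual proof.
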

		
		\begin{proof}
Since $ \mathcal{H}^\infty(B_Y)$ is the dual of $\mathcal{G}^\infty(B_Y)$,  we have that $F$ is holomorphic if and only if it is so when applied to any element of  $\mathcal{G}^\infty(B_Y)$. To derive the conclusion we have to prove that it is enough to consider just the evaluations $\delta_y\in \mathcal{G}^\infty(B_Y)$. Any $v\in \mathcal{G}^\infty(B_Y)$ can be represented in the quotient $\ell_1(B_Y)/\H^\infty(B_Y)^\perp$ by a sum $\sum_k \lambda_k \delta_{y_k}$, where $(\lambda_k)\in \ell_1$ and $y_k\in B_Y$ for all $k$. So we need to prove that the mapping $[x\mapsto F(x)(v)=\sum_k \lambda_k F(x)(y_k)]$ is holomorphic. Since, by hypothesis, the mappings $[x\mapsto \sum_{k=1}^n \lambda_k F(x)(y_k)]$ are holomorphic for all $n$, to derive the result by means of \cite[Theorem 14.16]{Chae}, we only need to check that these mappings are locally uniformly bounded. Given $x_0\in U$ take $r>0$ and $C>0$ such that $B_X(x_0,r)\subset U$ and $F$ is bounded by $C$ in $B_X(x_0,r)$. Then, for every $x\in B_X(x_0,r)$,
\[
\left| \sum_{k=1}^n \lambda_k F(x)(y_k)\right|\le \sum_{k=1}^n |\lambda_k| |F(x)(y_k)|\le C \|(\lambda_k)\|_{\ell_1}.
\]
\end{proof}

Now we proceed with the proof of the holomorphic property of the Gelfand transform.

\begin{proof} {\it (of Proposition \ref{gelfand-transform})}
We want to prove that the function
				\begin{align*}
					\mathcal{H}^\infty(B_Y,X^{**}) &\to \mathcal{H}^\infty(B_Y) \\
					g &\mapsto \Phi^g (f)
				\end{align*}
			is holomorphic of bounded type. As in equation \eqref{bounded} we obtain that there exists $r>0$ such that
\[
\|\Phi^g (f)\| \leq \|f\|_{(\|g\|+r)B_X}
\] and hence our target function is bounded on bounded sets.
Hence, it is locally bounded. Now, appealing to the previous lemma, it remains to prove that, for all $y\in B_Y$, the mapping $[g\mapsto \Phi^g (f)(y)]$ is holomorphic. This is true since it is the composition of the following two holomorphic mappings:
\[\begin{array}{rclrcl}
\H^\infty(B_Y, X^{**}) &\to & X^{**} &\qquad\qquad X^{**} &\to &\mathbb C\\
g &\mapsto & g(y) &\qquad\qquad x^{**} &\mapsto & \Phi(\tau^*_{x^{**}}(f))(y),
\end{array}
\] and the proof is finished.
\end{proof}

\section{The fibering of $\mathcal{M}_{b,\infty}(X,B_Y)$ over $\mathcal{H}^\infty(B_Y,X^{**})$ } \label{Section-Fiber Mb}
	
	We now focus on the set of elements in $\mathcal{M}_{b,\infty}(X,B_Y)$ that are projected to the same function $g$ of $\mathcal{H}^\infty(B_Y,X^{**})$. This is called the {\em fiber} over $g$ and is defined by
		\[ \mathscr{F}(g) = \{\Phi \in \mathcal{M}_{b,\infty}(X,B_Y) \colon \xi(\Phi) = g \}. \]

	Our aim in this section is to study the size of these sets.

In the scalar-valued spectrum the usual projection is $\pi:\M_b(X)\to X^{**}$, given by $\pi(\varphi)(x^*)=\varphi(x^*)$, for all $x^*\in X^*$. The fiber over each $z\in X^{**}$ is the set of all $\varphi$ such that $\pi(\varphi)=z$. Clearly, the fiber over $z$ contains at least the evaluation homomorphism $\delta_z$.  When finite type polynomials are dense in $\mathcal{H}_b(X)$, the fiber over $z$ is just $\{\delta_z\}$ \cite[Theorem 3.3]{AronColeGamelin}.

Analogously, for every $g \in \mathcal{H}^\infty(B_Y,X^{**})$ we can define the corresponding composition homomorphism $ C_g$. Since $ C_g$ verifies that $\xi( C_g) = g$ we have that the sets $\mathscr{F}(g)$ are non-empty. Moreover, as in the scalar-valued spectrum, the density of finite type polynomials on $X$ implies that the homomorphisms $ C_g$ should be all we find in each fiber. This similarity between scalar and vector-valued spectra is made clear through the following remark.

\begin{remark}\rm \label{Composition_morphism}
For each $\Phi \in \mathcal{M}_{b,\infty}(X,B_Y)$ and each $y\in B_Y$, we denote by $\delta_y\circ \Phi\in\M_b(X)$ the mapping given by $[f\in\H_b(X)\mapsto \Phi(f)(y)]$. Then, it is clear that $\Phi$ is the  composition homomorphism $ C_g$ if and only if for every $y\in B_Y$, $\delta_y\circ \Phi$ is the evaluation homomorphism $\delta_{g(y)}$. Also, $\Phi\in\F(g)$ if and only if for every $y\in B_Y$, $\delta_y\circ \Phi$ is in the fiber (relative to the spectrum $\M_b(X)$) over $g(y)\in X^{**}$.
\end{remark}

Now, we easily obtain the following which was previously observed in \cite[page 10]{DiGaMaSe}.
	
		\begin{proposition}
			Let $X$ and $Y$ be Banach spaces. If finite type polynomials are dense in $\mathcal{H}_b(X)$ then for each $g \in \mathcal{H}^\infty(B_Y,X^{**})$ we have that $\mathscr{F}(g)$ consists solely of the corresponding $ C_g$.
		\end{proposition}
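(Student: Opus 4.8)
The plan is to reduce the vector-valued statement to the known scalar-valued result via Remark~\ref{Composition_morphism}. Fix $g\in\H^\infty(B_Y,X^{**})$ and take any $\Phi\in\F(g)$; I must show $\Phi=C_g$. By the remark, it suffices to prove that for every fixed $y\in B_Y$ the scalar homomorphism $\delta_y\circ\Phi\in\M_b(X)$ equals the evaluation $\delta_{g(y)}$. So the whole problem localizes, one point $y$ at a time, to a statement purely about the scalar spectrum $\M_b(X)$.

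First I would record that $\delta_y\circ\Phi$ is a continuous algebra homomorphism from $\H_b(X)$ to $\mathbb C$, hence genuinely an element of $\M_b(X)$ (continuity is inherited from the $\|\cdot\|_{rB_X}$-continuity of $\Phi$ noted earlier in the excerpt). Next, since $\xi(\Phi)=g$, by the very definition of $\xi$ we have $(\delta_y\circ\Phi)(x^*)=\Phi(x^*)(y)=\xi(\Phi)(y)(x^*)=g(y)(x^*)$ for all $x^*\in X^*$. In the language of the scalar projection $\pi:\M_b(X)\to X^{**}$ this says precisely $\pi(\delta_y\circ\Phi)=g(y)$, i.e. $\delta_y\circ\Phi$ lies in the scalar fiber over the point $g(y)\in X^{**}$.

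Now I invoke the hypothesis. Since finite type polynomials are dense in $\H_b(X)$, \cite[Theorem 3.3]{AronColeGamelin} tells us that the scalar fiber over any point $z\in X^{**}$ is the singleton $\{\delta_z\}$. Applying this with $z=g(y)$ forces $\delta_y\circ\Phi=\delta_{g(y)}$. As $y\in B_Y$ was arbitrary, Remark~\ref{Composition_morphism} yields $\Phi=C_g$, and since $C_g$ always satisfies $\xi(C_g)=g$ we conclude $\F(g)=\{C_g\}$.

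I do not expect a serious obstacle here: the argument is essentially a dictionary translation, and every ingredient is already available. The one point that deserves care is confirming that $\delta_y\circ\Phi$ really is a (nonzero, continuous) scalar homomorphism so that the Aron--Cole--Gamelin theorem applies verbatim; nonzeroness follows because $\Phi\ne 0$ together with the multiplicative structure, and continuity from the $\|\cdot\|_{rB_X}$-bound on $\Phi$. The only genuine input is the cited density-of-finite-type-polynomials theorem for the scalar case, which is precisely what makes the fiber collapse; the vector-valued wrapper around it is purely formal.
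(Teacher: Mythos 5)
Your argument is correct and is exactly the proof the paper intends: the proposition is stated right after Remark~\ref{Composition_morphism} precisely because the reduction to the scalar fibers via $\delta_y\circ\Phi$ and \cite[Theorem 3.3]{AronColeGamelin} makes it immediate, which is why the paper labels it as ``easily obtained'' and gives no further details. Your added care about $\delta_y\circ\Phi$ being a nonzero continuous homomorphism is a sensible check but raises no issue beyond what the paper already records.
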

		%\begin{proof}
%When finite type polynomials are dense in $\mathcal{H}_b(X)$ it is known \cite[Theorem 3.3]{AronColeGamelin} that the only homomorphisms in $\M_b(X)$ are the evaluations $\delta_z$, for  $z\in X^{**}$. By the previous remark, given $\Phi\in\F(g)$, we have in this case that $\delta_y\circ \Phi=\delta_{g(y)}$, for all $y\in B_Y$. Hence, $\Phi= C_g$.
%			 %Since finite type polynomials are dense in $\mathcal{H}_b(X)$, each $\Phi$ in the generalized spectrum is uniquely determined by its values on $X^*$. Moreover, for every $x^* \in X^*$ we have that $\Phi(x^*)(y) = \xi (\Phi) (y) (x^*)$ so the values of $\Phi$ on $X^*$ are already determined by $\xi(\Phi)$ and thus every homomorphism $\Phi$ coincides with $\Phi_{\xi(\Phi)}$.
%		\end{proof}
	Whenever finite type polynomials are not dense in $\mathcal{H}_b(X)$ we might find more elements in the fibers over elements in $\mathcal{H}^\infty(B_Y,X^{**})$. For instance, the following theorem shows that if there is a polynomial in $X$ which is not weakly continuous on bounded sets there is a {\em disk} of  homomorphisms in each fiber. The proof is inspired by an analogous result for the scalar-valued spectrum \cite[Theorem 3.1]{AronFalcoGarciaMaestre}.

		\begin{theorem} \label{inyectar disco}
			If $X$ is a Banach space such that there exists a polynomial on $X$ which is not weakly continuous on bounded sets, then for each $g \in \mathcal{H}^\infty(B_Y,X^{**})$ we can inject the complex disk $\mathbb{D}$ analytically into the fiber $\mathscr{F}(g)$.
		\end{theorem}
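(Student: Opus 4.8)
The plan is to manufacture, out of a badly behaved polynomial, a one-parameter analytic family of homomorphisms lying over $g$, each obtained as an ultrafilter limit of composition homomorphisms. First I would record the standard consequence of the failure of weak continuity: there exist a sequence $(x_n)$ in $B_X$ that is weakly null together with a $k$-homogeneous polynomial $P$ on $X$ (necessarily $k\ge 2$) such that $P(x_n)\to 1$. Indeed, if some polynomial is not weakly continuous on bounded sets there is a bounded sequence $z_n\rightharpoonup z$ whose values do not converge; writing $z_n=z+u_n$ with $u_n$ weakly null and expanding through $\v P$, some homogeneous block $u\mapsto \v P(z^{k-j},u^j)$ of degree $j\ge 2$ (degree $1$ is excluded, since linear forms are weakly continuous) fails to vanish along $(u_n)$, and passing to a subsequence and normalizing yields the claimed $(x_n)$ and $P$. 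I would cite this as standard, following the argument in \cite[Theorem 3.1]{AronFalcoGarciaMaestre}.

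Next, fix a free ultrafilter $\mathcal U$ on $\mathbb N$ and set $R=\|g\|+1$. For $\lambda\in\D$ and $n\in\mathbb N$ I would consider the composition homomorphism $C_{g_{n,\lambda}}$ attached to $g_{n,\lambda}:=g+\lambda\, J_Xx_n$, that is, $g$ translated by the constant function $y\mapsto \lambda J_Xx_n\in X^{**}$. Each $C_{g_{n,\lambda}}$ lies in $\M_{b,\infty}(X,B_Y)_R$, because $\|g(y)+\lambda J_Xx_n\|\le R$. Since $\M_{b,\infty}(X,B_Y)_R$ is weak-star compact by item \eqref{comment3} of the Duality and compactness observations, I would define
\[
\Phi_\lambda=\lim_{\mathcal U}C_{g_{n,\lambda}}\in\M_{b,\infty}(X,B_Y)_R,
\]
so that $\Phi_\lambda(f)(y)=\lim_{\mathcal U}\widetilde f(g(y)+\lambda J_Xx_n)$ for all $f\in\H_b(X)$ and $y\in B_Y$. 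Being a limit inside the spectrum, $\Phi_\lambda$ is automatically a nonzero continuous homomorphism. Fiber membership is then immediate: testing $\xi(\Phi_\lambda)$ against $x^*\in X^*$ gives $\Phi_\lambda(x^*)(y)=\lim_{\mathcal U}\bigl(g(y)(x^*)+\lambda\, x^*(x_n)\bigr)=g(y)(x^*)$, because $(x_n)$ is weakly null; hence $\xi(\Phi_\lambda)=g$ and $\Phi_\lambda\in\F(g)$ for every $\lambda$.

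To see that $\lambda\mapsto\Phi_\lambda$ is analytic I would invoke Lemma~\ref{holomorphic mapping} with the one-dimensional domain $\D\subset\mathbb C$. For each $f$ the map $\lambda\mapsto\Phi_\lambda(f)$ is uniformly bounded by $\|f\|_{RB_X}$, hence locally bounded, so by the Lemma it suffices to check that $\lambda\mapsto\delta_y(\Phi_\lambda(f))=\Phi_\lambda(f)(y)$ is holomorphic for each fixed $y$. Now for fixed $f,y$ the functions $h_n(\lambda)=\widetilde f(g(y)+\lambda J_Xx_n)$ are holomorphic on $\D$ and uniformly bounded on compact subsets, hence form a normal family; therefore $\mathcal U$ converges in the compact-open topology to a holomorphic function whose value at $\lambda$ is precisely $\lim_{\mathcal U}h_n(\lambda)=\Phi_\lambda(f)(y)$. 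Thus $\lambda\mapsto\Phi_\lambda(f)(y)$ is holomorphic, and Lemma~\ref{holomorphic mapping} upgrades this to holomorphy of $\lambda\mapsto\Phi_\lambda(f)\in\H^\infty(B_Y)$ for every $f$, which is the required analytic dependence.

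The genuine obstacle is injectivity, and this is where $P$ enters. Fix any $y_0\in B_Y$ and set $\psi(\lambda)=\Phi_\lambda(P)(y_0)=\lim_{\mathcal U}\widetilde P(g(y_0)+\lambda J_Xx_n)$. Each $\widetilde P(g(y_0)+\lambda J_Xx_n)$ is a polynomial in $\lambda$ of degree $\le k$ whose coefficient of $\lambda^k$ equals $\widetilde P(J_Xx_n)=P(x_n)$; taking the ultralimit of the bounded coefficients shows that $\psi$ is a polynomial with leading coefficient $\lim_{\mathcal U}P(x_n)=1$, hence nonconstant of degree $k$. Because of the homogeneity of $P$ the crude assignment $\lambda\mapsto\Phi_\lambda$ need not be injective on all of $\D$ (for instance $\psi$ could be $\lambda\mapsto\lambda^k$), so the delicate step is a local reparametrization: I would choose $\lambda_0\in\D$ with $\psi'(\lambda_0)\ne 0$, then $\rho>0$ with $\overline{D(\lambda_0,\rho)}\subset\D$ and $\psi$ injective on $D(\lambda_0,\rho)$, and pass to the affine biholomorphism $\beta(\zeta)=\lambda_0+\rho\zeta$ of $\D$ onto $D(\lambda_0,\rho)$. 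The map $\zeta\mapsto\Phi_{\beta(\zeta)}$ is then analytic, takes values in $\F(g)$, and is injective, since $\Phi_{\beta(\zeta_1)}=\Phi_{\beta(\zeta_2)}$ forces $\psi(\beta(\zeta_1))=\psi(\beta(\zeta_2))$ and hence $\zeta_1=\zeta_2$. This yields the desired analytic injection of $\D$ into $\F(g)$; I expect the extraction of $(x_n),P$ and the weak-star/normal-family steps to be routine, with the $k$-homogeneity forcing exactly this reparametrization rather than a direct injection of the whole disk.
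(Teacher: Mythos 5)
Your overall architecture---an ultrafilter limit of perturbed composition homomorphisms, fiber membership because the perturbing direction is weak-star null, analyticity via local boundedness plus Lemma~\ref{holomorphic mapping}, and a final affine reparametrization onto a small disk where the test polynomial is injective---is exactly the paper's plan; even your normal-families justification of analyticity is a legitimate (arguably more elementary) substitute for the paper's use of Mujica's linearization $\mathcal{H}^\infty(\mathbb{D},\mathcal{H}^\infty(B_Y)) = (\mathcal{G}^\infty(\mathbb{D})\widehat{\otimes}_\pi \mathcal{G}^\infty(B_Y))^*$ and weak-star compactness. The genuine gap is your very first step. From ``there exists a polynomial not weakly continuous on bounded sets'' you extract a bounded \emph{sequence} $z_n\rightharpoonup z$ witnessing the discontinuity, and from it a weakly null sequence $(x_n)\subset B_X$ and a $k$-homogeneous $P$ with $P(x_n)\to 1$. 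That deduction is invalid: failure of weak continuity on bounded sets is a net phenomenon (the weak topology on bounded sets is metrizable only when $X^*$ is separable), and it does not imply failure of weak \emph{sequential} continuity. The standard counterexample is $X=\ell_1$: by the Schur property every polynomial on $\ell_1$ is weakly sequentially continuous, so $P(x_n)\to P(0)=0$ for every weakly null $(x_n)$ and every homogeneous $P$, yet $\ell_1$ carries polynomials not weakly continuous on bounded sets (e.g.\ $x\mapsto\sum_n x_n^2$). So the theorem's hypothesis holds for $\ell_1$ while the sequence you need does not exist; without it your leading-coefficient computation $\lim_{\mathcal U}P(x_n)=1$, hence the nonconstancy of $\psi$, hence injectivity, all collapse.

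The repair is precisely what the paper does: invoke Boyd--Ryan or Aron--Dimant to get an $m$-homogeneous $P$ whose canonical extension $\widetilde P$ is not weak-star continuous at \emph{any} point of $X^{**}$; anchor the discontinuity at $x_0^{**}=g(0)$, obtaining $\varepsilon>0$ and a bounded \emph{net} $(x_\alpha^{**})$ weak-star convergent to $x_0^{**}$ with $|\widetilde P(x_\alpha^{**})-\widetilde P(x_0^{**})|>\varepsilon$; and perturb by $t(x_\alpha^{**}-x_0^{**})$ rather than by $\lambda J_X x_n$. Then $\xi(\Phi_t)=g$ because the perturbing net is weak-star null, the map $t\mapsto\Phi_t(P)(0)$ is still a polynomial of degree at most $m$ (its coefficients are bounded ultralimits coming from the binomial expansion through $\v{\widetilde{P}}$), and its nonconstancy is obtained not from a leading coefficient equal to $1$ but from $|\Phi_1(P)(0)-\Phi_0(P)(0)|\geq\varepsilon$. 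Your remaining steps---the limit inside the weak-star compact set $\M_{b,\infty}(X,B_Y)_R$, the pointwise analyticity upgraded by Lemma~\ref{holomorphic mapping}, and the reparametrization $t\mapsto t_0+st$---go through essentially verbatim, since none of them uses that the direction lies in $X$ or that the leading coefficient is $1$; only the source of the discontinuity must be the net-based, everywhere-discontinuous extension $\widetilde P$.
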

		\begin{proof} If there exists a polynomial on $X$ which is not weakly continuous on bounded sets, then (for a certain $m$) there is an $m$-homogeneous polynomial $P$ such that its canonical extension $\widetilde{P}$ is not weak-star continuous at any $x^{**} \in X^{**}$ (see \cite[Corollary 2]{BoydRyan} or \cite[Proposition 1]{AronDimant}).
			Given $g \in \mathcal{H}^\infty(B_Y,X^{**})$, denoting $x^{**}_0 = g(0)$
			 we can find an $\varepsilon > 0$ and a bounded net $(x^{**}_{\alpha})$, weak-star convergent to $x^{**}_0$, such that $|\widetilde{P}(x^{**}_\alpha) - \widetilde{P}(x^{**}_0)| > \varepsilon$  for every $\alpha$.
			We now fix an ultrafilter $\mathscr{U}$ containing the sets $\{ \alpha \colon \alpha \geq \alpha_0 \}$ and define for $t \in \D$ the mapping $\Phi_t: \mathcal{H}_b(X) \to \mathcal{H}^\infty(B_Y)$ by
			\[
			\Phi_t(f)(y) = \lim_{\mathscr{U}} \widetilde{f}(g(y) + t(x^{**}_\alpha - x^{**}_0 )).
			\]
Note that the limit along the ultrafilter exists because for each $t$ and each $f$, if $M$ is a bound for the sequence $(x^{**}_{\alpha})$ we have
\[
\left\|\widetilde{f}(g(y) + t(x^{**}_\alpha - x^{**}_0 ))\right\|\le\|f\|_{(\|g\|+M+\|x_0^{**}\|)B_X},
\] and so the set $(\mathcal{M}_{b,\infty}(X,B_Y))_{\|g\|+M+\|x_0^{**}\|}$ is weak-star compact (see item \ref{comment3} of the comment about duality and compactness in the Introduction).
 The previous inequality also shows that, for all $\alpha$, the mappings $[y\mapsto \widetilde{f}(g(y) + t(x^{**}_\alpha - x^{**}_0 ))]$ are in a ball of $\H^\infty(B_Y)=\left(\mathcal{G}^\infty(B_Y)\right)^*$ and by weak-star compactness we obtain that $\Phi_t(f)\in\H^\infty(B_Y)$.
			Also, it is easy to see that, for each $t \in \mathbb{C}$, $\Phi_t$ is a homomorphism in  $\mathcal{M}_{b,\infty}(X,B_Y)$ and  $\xi(\Phi_t) = g$. 	 To assert that the mapping $[t\mapsto \Phi_t]$ is analytic, we need to check that for every $f \in \mathcal{H}^\infty(B_X)$ the following mapping is analytic:
						\begin{align*}
			\Phi(f): \mathbb{D} &\to \mathcal{H}^\infty(B_Y) \\
			t&\mapsto \Phi_t(f) = \lim_{\mathscr{U}} \widetilde{f}(g(y) + t(x^{**}_\alpha - x^{**}_0 )).
			\end{align*}
						Fix $f \in \mathcal{H}^\infty(B_X)$ and define $f_\alpha: \mathbb{D} \to\mathcal{H}^\infty(B_Y)$ by
			\[ f_\alpha (t) (y) = \widetilde{f}(g(y) + t(x^{**}_\alpha - x^{**}_0 )).  \]
			The set $\{f_\alpha\}_\alpha$ is contained in $\|f\|_{KB_X}\overline{B}_{\mathcal{H}^\infty(\mathbb{D},\mathcal{H}^\infty(B_Y))}$, where $K=\|g\|+(|t_0|+s)(M+\|x_0^{**}\|)$. Since, by \cite[Theorem 2.1]{Mujica},
			\[ \mathcal{H}^\infty(\mathbb{D},\mathcal{H}^\infty(B_Y)) = \mathcal{L}(\mathcal G^\infty(\mathbb{D}),\mathcal{H}^\infty(B_Y)) = (\mathcal G^\infty(\mathbb{D})\widehat{\otimes}_\pi \mathcal G^\infty(B_Y))^*, \]
			the set $\|f\|_{KB_X}\overline{B}_{\mathcal{H}^\infty(\mathbb{D},\H^\infty(B_Y))}$ is a weak-star compact set, which tells us that the limit of the $f_\alpha$'s can be taken analytically on $t$.
			This proves the analyticity of the mapping
			\begin{align*}
			t&\mapsto \Phi_t(f) = \lim_{\mathscr{U}} \widetilde{f}(g(y) + t(x^{**}_\alpha - x^{**}_0 )).
			\end{align*}
					Moreover, since $P$ is an $m$-homogeneous polynomial, we can write
			\[ \widetilde{P}(g(y) + t(x^{**}_\alpha - x^{**}_0 )) = \widetilde{P}(g(y)) + \sum_{j=1}^m t^j \binom{m}{j} \v{\widetilde{P}}(g(y)^{m-j},(x^{**}_\alpha - x^{**}_0)^j). \]
			In particular, taking $y=0$ we obtain
			\[ \Phi_t(P)(0) = \sum_{j=0}^m a_j t^j. \]
			Now, since
			\[ |\Phi_1(P)(0) - \Phi_0(P)(0)| = \lim_{\mathscr{U}}|\widetilde{P}(x^{**}_\alpha) - \widetilde{P}(x^{**}_0)| \geq \varepsilon, \]
			we have a non-constant polynomial of degree $\leq m$, so we can find $t_0 \in \D$ and $s > 0$ such that $\Phi_t(P)(0)$ is injective in $\mathbb{D}(t_0,s)$.
		Finally,  through the composition with the mapping $\gamma :\D \to \D(t_0,s)$ given by $[t\mapsto t_0+st]$ we obtain that $\Phi\circ\gamma:\D\to \F(g)$ is the desired analytic injection.
		\end{proof}

\begin{remark} \rm \textbf{Fibers over constant functions.} \label{Remark: constant} The scalar-valued spectrum $\M_b(X)$ is naturally seen inside $\M_{b,\infty}(X,B_Y)$ through the inclusion mapping $[\varphi \mapsto \varphi\cdot1_Y]$ where each element of $\M_b(X)$ lies in a fiber over a constant function. Then, for a constant function $g\in \mathcal{H}^\infty(B_Y,X^{**})$ it is natural to wonder whether there are homomorphisms in $\F(g)$ not belonging to $\M_b(X)$. It is worth noting that the previous theorem does not provide examples of that kind.  Indeed, if we take $g(y)=x_0^{**}$ for all $y$, we have that
		\begin{align*}
			\Phi_t (f) (y) = \lim_{\mathscr{U}} \widetilde{f} (x_0^{**} + t(x_\alpha^{**} - x_0^{**})),
		\end{align*}
	which is a constant function of $y$ and hence identified with an element of $\M_b(X)$. However, building on the previous result we obtain in the next theorem an analytic injection of the ball $B_{\H^\infty(B_Y)}$ in each fiber over a constant function, providing examples of non scalar-valued homomorphisms in those fibers.
\end{remark}

\begin{theorem} \label{Prop:constant_function}
			If $X$ is a Banach space such that there exists a polynomial on $X$ which is not weakly continuous on bounded sets, then for each constant function $g \in \mathcal{H}^\infty(B_Y,X^{**})$ we can inject the ball $B_{\H^\infty(B_Y)}$ analytically in the fiber $\mathscr{F}(g)$. Moreover, through this inclusion each non-constant function in $B_{\H^\infty(B_Y)}$ is mapped into a non scalar-valued homomorphism of $\mathscr{F}(g)$.
		\end{theorem}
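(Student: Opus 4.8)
The plan is to upgrade the scalar parameter $t\in\D$ of Theorem \ref{inyectar disco} to a holomorphic function $\varphi\in B_{\H^\infty(B_Y)}$, replacing the constant perturbation $t(x^{**}_\alpha-x^{**}_0)$ by the $y$-dependent one $\varphi(y)(x^{**}_\alpha-x^{**}_0)$. Keeping the data produced in the proof of Theorem \ref{inyectar disco} (the $m$-homogeneous polynomial $P$ with $\widetilde P$ not weak-star continuous, the bounded net $(x^{**}_\alpha)$ with $\|x^{**}_\alpha\|\le M$ converging weak-star to $x^{**}_0=g(0)$ and satisfying $|\widetilde P(x^{**}_\alpha)-\widetilde P(x^{**}_0)|>\eps$, and the ultrafilter $\mathscr U$), I would define, for every $\psi\in\H^\infty(B_Y)$, a map $\Phi_\psi\colon\H_b(X)\to\H^\infty(B_Y)$ by
\[
\Phi_\psi(f)(y)=\lim_{\mathscr U}\widetilde f\big(x^{**}_0+\psi(y)(x^{**}_\alpha-x^{**}_0)\big).
\]
The first block of routine checks is as before: the inequality $\|x^{**}_0+\psi(y)(x^{**}_\alpha-x^{**}_0)\|\le \|x^{**}_0\|+\|\psi\|(M+\|x^{**}_0\|)$ shows the ultrafilter limit exists and that $\Phi_\psi$ is bounded with $\|\Phi_\psi(f)\|\le\|f\|_{rB_X}$ for a suitable $r$, so $\Phi_\psi\in\M_{b,\infty}(X,B_Y)$; multiplicativity of $[f\mapsto\widetilde f]$ together with multiplicativity of ultrafilter limits makes $\Phi_\psi$ a homomorphism. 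That $\Phi_\psi\in\F(g)$ is where the constancy of $g$ enters: for $x^*\in X^*$ the extension $\widetilde{x^*}$ is linear and weak-star continuous, so $\Phi_\psi(x^*)(y)=x^{**}_0(x^*)+\psi(y)\lim_{\mathscr U}(x^{**}_\alpha-x^{**}_0)(x^*)=x^{**}_0(x^*)$, the last limit vanishing by weak-star convergence; hence $\xi(\Phi_\psi)=g$ for every $\psi$.

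The heart of the argument, and the step I expect to be the main obstacle, is the analyticity of $[\psi\mapsto\Phi_\psi(f)]$ as a map into $\H^\infty(B_Y)$, now over the infinite-dimensional parameter space $\H^\infty(B_Y)$ rather than $\D$. I would run the same compactness scheme as in Theorem \ref{inyectar disco}. For fixed $\alpha$ set $F_\alpha(\psi)(y)=\widetilde f(x^{**}_0+\psi(y)(x^{**}_\alpha-x^{**}_0))$; writing $h_\alpha(\lambda)=\widetilde f(x^{**}_0+\lambda(x^{**}_\alpha-x^{**}_0))$, which is an entire scalar function, we have $F_\alpha(\psi)=h_\alpha\circ\psi$, so $\delta_y\circ F_\alpha=h_\alpha\circ\delta_y$ is holomorphic for each $y$. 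Since $F_\alpha$ is locally bounded, Lemma \ref{holomorphic mapping} gives that each $F_\alpha$ is holomorphic from $\H^\infty(B_Y)$ into $\H^\infty(B_Y)$. The bound above makes $\{F_\alpha\}$ lie in a fixed ball of $\H^\infty(\rho B_{\H^\infty(B_Y)},\H^\infty(B_Y))$ on each ball of radius $\rho$, and by \cite[Theorem 2.1]{Mujica} this space is a dual, namely $(\mathcal G^\infty(\rho B_{\H^\infty(B_Y)})\widehat\otimes_\pi\mathcal G^\infty(B_Y))^*$; the relevant ball is therefore weak-star compact, its weak-star topology is pointwise convergence at pairs $(\psi,\delta_y)$, and so the ultrafilter limit $[\psi\mapsto\Phi_\psi(f)]$ is again an element of this holomorphic-function space. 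This yields the sought analyticity of $[\psi\mapsto\Phi_\psi(f)]$ for every $f\in\H_b(X)$.

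It remains to secure injectivity and the non-scalar claim, which I would handle exactly as in Theorem \ref{inyectar disco} by reparametrizing. Expanding the $m$-homogeneous polynomial $P$ as there gives $\Phi_\psi(P)(y)=\sum_{j=0}^m a_j\psi(y)^j=:Q(\psi(y))$, where $Q$ is a polynomial independent of $\psi$ and $|Q(1)-Q(0)|=\lim_{\mathscr U}|\widetilde P(x^{**}_\alpha)-\widetilde P(x^{**}_0)|\ge\eps$, so $Q$ is non-constant; choosing $t_0$ with $Q'(t_0)\neq0$ and $s>0$ small makes $Q$ injective on the disk $\D(t_0,s)$. I would then define the claimed map by $\Lambda(\varphi)=\Phi_{t_0+s\varphi}$ for $\varphi\in B_{\H^\infty(B_Y)}$. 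Analyticity of $\Lambda$ follows from the previous paragraph composed with the affine change $\varphi\mapsto t_0+s\varphi$. For injectivity, $\Lambda(\varphi)=\Lambda(\varphi')$ forces $Q(t_0+s\varphi(y))=Q(t_0+s\varphi'(y))$ for all $y$; since $t_0+s\varphi(y),\,t_0+s\varphi'(y)\in\D(t_0,s)$ and $Q$ is injective there, we get $\varphi=\varphi'$. Finally, if $\varphi$ is non-constant then $t_0+s\varphi$ takes two distinct values, whence $Q(t_0+s\varphi(\cdot))=\Lambda(\varphi)(P)$ is a non-constant element of $\H^\infty(B_Y)$; consequently $\Lambda(\varphi)$ cannot be of the form $[f\mapsto\varphi_0(f)\cdot 1_Y]$ and is thus a non scalar-valued homomorphism of $\F(g)$.
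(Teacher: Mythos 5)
Your proposal is correct, and in fact the map you end up with is literally the paper's map: since $g\equiv x_0^{**}$, the paper's injection $\Psi(h)(f)(y)=\Phi\circ\gamma(h(y))(f)$ unwinds to $\lim_{\mathscr U}\widetilde f\bigl(x_0^{**}+(t_0+sh(y))(x_\alpha^{**}-x_0^{**})\bigr)$, which is exactly your $\Lambda(h)(f)(y)$; the injectivity and non-scalar arguments via the polynomial $Q$ also match. Where you genuinely diverge is in the key analyticity step. The paper first observes (Remark \ref{Remark: constant}) that for constant $g$ each homomorphism $\Phi_t$ from Theorem \ref{inyectar disco} is scalar-valued, so $t\mapsto\Phi\circ\gamma(t)(f)$ is a scalar holomorphic function on $\D$ whose analyticity was already established there; analyticity of $h\mapsto\Psi(h)(f)$ then drops out of Lemma \ref{holomorphic mapping} by composing the evaluation $h\mapsto h(y)$ with that one-variable function. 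You instead prove analyticity over the infinite-dimensional parameter directly, re-running the Mujica linearization argument: each $F_\alpha=h_\alpha\circ(\cdot)$ is holomorphic by Lemma \ref{holomorphic mapping}, the family sits in a ball of $\H^\infty(\rho B_{\H^\infty(B_Y)},\H^\infty(B_Y))=(\mathcal G^\infty(\rho B_{\H^\infty(B_Y)})\widehat\otimes_\pi\mathcal G^\infty(B_Y))^*$ by \cite[Theorem 2.1]{Mujica}, and the ultrafilter limit is taken in this weak-star compact ball, where weak-star convergence of bounded nets is pointwise convergence at the pairs $(\psi,y)$. Your route is heavier but self-contained: it never needs the scalar-valuedness observation, and it makes transparent that constancy of $g$ is used only so that the coefficients of $Q$ are independent of $y$ (your fiber computation $\xi(\Phi_\psi)=g$ works verbatim for any $g$), which pinpoints exactly why this method injects only a disk, not the ball, over non-constant $g$. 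The paper's route buys brevity by reusing the analyticity already proved for the disk injection.
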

		\begin{proof} Given $g(y)=x_0^{**}$ for all $y$, from the proof of Theorem \ref{inyectar disco} we have an analytic injection $\Phi\circ\gamma:\D\to \F(g)$. Now consider $\Psi:B_{\H^\infty(B_Y)}\to\F(g)$ given by
\[
\Psi(h)(f)(y)= \Phi\circ\gamma (h(y))(f), \quad\textrm{ for all } h\in B_{\H^\infty(B_Y)},\ f\in\H_b(X),\ y\in B_Y.
\] Note that this definition makes sense because, by the previous remark, for each $h$ and $y$,  the homomorphism $\Phi\circ\gamma (h(y))$ is scalar-valued.

Let us check that $\Psi$ is well defined, analytic and injective. First, note that, for all $f\in\H_b(X)$ and $h\in B_{\H^\infty(B_Y)}$, the mapping $\Psi(h)(f)$  is analytic since it is the composition of two holomorphic mappings: $[y\mapsto h(y)]$ and $[t\mapsto \Phi\circ\gamma (t)(f)]$. As before, it is bounded:
\[
\sup_{y\in B_Y}|\Psi(h)(f)(y)|\le \|f\|_{(\|g\|+M+\|x_0^{**}\|)B_X}.
\]
Now, it is readily seen that $\Psi(h)$ belongs to $\M_{b,\infty}(X,B_Y)$ and that in fact it is in the fiber over $g$, so $\Psi$ is well defined.

Secondly, for each $f\in\H_b(X)$, the mapping from $B_{\H^\infty(B_Y)}$ to $\H^\infty(B_Y)$ given by $[h\mapsto \Psi(h)(f)]$ is analytic. Indeed, by Lemma \ref{holomorphic mapping}, it is enough to see that the mapping $[h\mapsto\Psi(h)(f)(y)]$ is analytic, which again can be done by writing it as the composition of a linear and a holomorphic mapping: $[h\mapsto h(y)]$ and $[t\mapsto \Phi\circ\gamma (t)(f)]$.

Thirdly,  $\Psi$ is injective because $\Phi\circ\gamma$ has the same property.

Finally, note that $\Psi$ maps each non-constant function in $B_{\H^\infty(B_Y)}$ into a non scalar-valued homomorphism. If $h\in B_{\H^\infty(B_Y)}$ is non-constant then there exist $y_1$ and $y_2$ in $B_Y$ such that $h(y_1)\not = h(y_2)$ and thus $\Phi\circ\gamma (h(y_1))\not = \Phi\circ\gamma (h(y_2))$. So $\Psi(h)$ cannot be of the form $\varphi\cdot1_Y$ for a scalar valued $\varphi$.
\end{proof}

	\section{The radius function} \label{Section-Radius function}

Aron, Cole and Gamelin \cite{AronColeGamelin} introduced a radius function on $\M_b(X)$ and proved several properties. Then, they extended this definition to homomorphisms in $\M_\infty(B_X)$  establishing a relationship between both spectra. We now follow the same plan in the vector-valued case.
	
	Given a homomorphism $\Phi \in \mathcal{M}_{b,\infty}(X,B_Y)$ we define its radius as
	\[ R(\Phi) = \inf \{ r > 0 \colon \|\Phi(f)\|_{B_Y} \leq \|f\|_{rB_X},  f \in \mathcal{H}_b(X) \}.\]
	It is worth noting that since the homomorphisms in $\mathcal{M}_{b,\infty}(X,B_Y)$ are continuous we have $ 0 \leq R(\Phi) < \infty$. Furthermore, the following result regarding the continuity of $\Phi$ in $R(\Phi)B_X$ holds. Note that  this is a vector-valued version of \cite[Lemma 2.1]{AronColeGamelin}. We omit the proof, as it is identical.
	
		\begin{lemma}
			For every $\Phi \in \mathcal{M}_{b,\infty}(X,B_Y)$ and $f \in \mathcal{H}_b(X)$  we have
			\[ \|\Phi(f)\|_{B_Y} \leq \|f\|_{R(\Phi)B_X}.
			 \]
		\end{lemma}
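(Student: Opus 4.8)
The plan is to exploit the fact that the set of radii appearing in
\[ R(\Phi) = \inf\{\, r>0 \colon \|\Phi(f)\|_{B_Y} \le \|f\|_{rB_X} \text{ for all } f\in\H_b(X)\,\} \]
is upward closed, and then to pass to the limit as $r$ decreases to $R(\Phi)$. First I would record that if a radius $r$ satisfies the bounding inequality for all $f$, then so does every $r'>r$, since $r\mapsto\|f\|_{rB_X}$ is non-decreasing. Hence the admissible set is a half-line with left endpoint $R(\Phi)$, and in particular for every $r>R(\Phi)$ one has $\|\Phi(f)\|_{B_Y}\le\|f\|_{rB_X}$ for all $f\in\H_b(X)$. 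Fixing $f$ and taking the infimum over these $r$ yields
\[ \|\Phi(f)\|_{B_Y} \le \inf_{r>R(\Phi)} \|f\|_{rB_X}. \]

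The lemma then reduces to the right-continuity claim $\inf_{r>R(\Phi)}\|f\|_{rB_X}=\|f\|_{R(\Phi)B_X}$. Writing $R=R(\Phi)$ and performing the linear substitution $x=cy$ with $c=r/R>1$, I would observe that $\|f\|_{rB_X}=\|f_c\|_{RB_X}$, where $f_c(y):=f(cy)$. Since $\|f_c\|_{RB_X}\ge\|f\|_{RB_X}$ always holds, it suffices to prove $\|f_c\|_{RB_X}\to\|f\|_{RB_X}$ as $c\downarrow 1$, for which it is enough to show $\|f_c-f\|_{RB_X}\to 0$.

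To establish this I would use the monomial expansion $f=\sum_m P_m$ into $m$-homogeneous polynomials, so that $f_c-f=\sum_m(c^m-1)P_m$ and
\[ \|f_c-f\|_{RB_X} \le \sum_m |c^m-1|\,\|P_m\|\,R^m. \]
The crucial input is that $f$ is of bounded type, which means $\sum_m\|P_m\|\rho^m<\infty$ for every $\rho>0$. Taking $\rho=\tfrac32 R$ as a dominating radius, for $c\in(1,\tfrac32)$ each summand is dominated by $\|P_m\|(\tfrac32 R)^m$, a summable majorant independent of $c$, while $|c^m-1|\to 0$ for each fixed $m$. Dominated convergence for series then gives $\|f_c-f\|_{RB_X}\to 0$, completing the argument.

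I expect the right-continuity step to be the main obstacle: in infinite dimensions the ball $RB_X$ is not compact, so one cannot argue continuity of the maximum modulus by extracting convergent subsequences of near-maximizers. The bounded-type hypothesis, encoded precisely in the everywhere-convergent series $\sum_m\|P_m\|\rho^m$, is what substitutes for compactness and powers the dominated-convergence estimate. By contrast, the first two steps are purely formal manipulations of the infimum defining $R(\Phi)$.
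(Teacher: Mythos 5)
Your proposal is correct and takes essentially the same route as the proof the paper omits, namely that of \cite[Lemma 2.1]{AronColeGamelin}: pass to radii $r>R(\Phi)$, where continuity is given by the definition of the infimum, and then prove the right-continuity $\|f\|_{rB_X}\downarrow\|f\|_{R(\Phi)B_X}$ by dominated convergence applied to the homogeneous Taylor expansion, which is available precisely because $f\in\H_b(X)$ satisfies $\sum_m\|P_m\|\rho^m<\infty$ for every $\rho>0$. The only point to patch is the degenerate case $R(\Phi)=0$ (which does occur, e.g.\ for the composition homomorphism $C_{\mathbf 0}$), where your dilation $c=r/R$ is undefined; there the same series estimate $\|f\|_{rB_X}\le|f(0)|+\sum_{m\ge1}\|P_m\|r^m\to|f(0)|=\|f\|_{R(\Phi)B_X}$ as $r\downarrow 0$ closes the argument directly.
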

		
		%\begin{proof}
%			Let $\Phi$ be a homomorphism in $\mathcal{M}_{b,\infty}(X,B_Y)$ and $r > 0$ such that $ r > R(\Phi)$. Since $\Phi$ is multiplicative and continuous with respect to the norm of uniform convergence on the ball $rB_X$, $\Phi$ has norm $1$. Therefore, the estimate
%			\[ \|\Phi(f)\|_{B_Y} \leq \|f\|_{rB_X}, \ f \in \mathcal{H}^{\infty}(B_X),
%			 \]
%			 holds. Since the norm of $f$ depends continuously on $r$, passing through the limit we obtain the desired estimate for $r = R(\Phi)$.
%		\end{proof}
		
For $\Phi \in \mathcal{M}_{b,\infty}(X,B_Y)$ we denote by $\Phi_m$ its restriction to $\mathcal P(^mX)$, that is $\Phi_m$ is a linear operator from $\mathcal P(^mX)$ into $\H^\infty(B_Y)$. As in the scalar-valued case we have

		\begin{proposition}
			\label{RadioLimsup}
		The radius function $R$ on $\mathcal{M}_{b,\infty}(X,B_Y)$ is given by
			\begin{equation*}
			R(\Phi) = \limsup_{m \to \infty} \|\Phi_m\|^{1/m}=\sup_{m \ge 1} \|\Phi_m\|^{1/m}.
			\end{equation*}
		\end{proposition}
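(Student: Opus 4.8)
The plan is to establish the two inequalities
\[ \sup_{m\ge 1}\|\Phi_m\|^{1/m}\le R(\Phi)\qquad\text{and}\qquad R(\Phi)\le \limsup_{m\to\infty}\|\Phi_m\|^{1/m}, \]
since, together with the trivial bound $\limsup_{m}\|\Phi_m\|^{1/m}\le \sup_{m}\|\Phi_m\|^{1/m}$, they force all three quantities to coincide. The first inequality is the soft one and the second carries the content.

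For the first inequality I would feed homogeneous polynomials into the preceding lemma. Any $P\in\mathcal P(^mX)$ satisfies $\|P\|_{rB_X}=r^m\|P\|_{B_X}$ by $m$-homogeneity, so the lemma gives $\|\Phi_m(P)\|_{B_Y}=\|\Phi(P)\|_{B_Y}\le\|P\|_{R(\Phi)B_X}=R(\Phi)^m\|P\|_{B_X}$. Taking the supremum over the unit ball of $\mathcal P(^mX)$ yields $\|\Phi_m\|\le R(\Phi)^m$, that is $\|\Phi_m\|^{1/m}\le R(\Phi)$ for every $m$, and hence the supremum is bounded by $R(\Phi)$.

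The second inequality I would prove by a Cauchy--Hadamard argument on the Taylor series. Fix $f\in\mathcal H_b(X)$ with Taylor expansion $f=\sum_m P_m$ at the origin, where $P_m=P_mf(0)$. Since $f$ is of bounded type it is bounded on every ball, and the Cauchy estimates $\|P_m\|_{sB_X}\le\|f\|_{sB_X}$ force $\|P_m\|_{B_X}^{1/m}\to 0$; hence the series converges to $f$ absolutely in each seminorm $\|\cdot\|_{sB_X}$, i.e. in $\mathcal H_b(X)$. As $\Phi$ is continuous we may apply it term by term, $\Phi(f)=\sum_m\Phi_m(P_m)$, and for any $s>\limsup_{m}\|\Phi_m\|^{1/m}$ the estimate
\[ \|\Phi(f)\|_{B_Y}\le\sum_m\|\Phi_m\|\,\|P_m\|_{B_X}\le\|f\|_{sB_X}\sum_m\Big(\tfrac{\|\Phi_m\|^{1/m}}{s}\Big)^m=:C_s\,\|f\|_{sB_X} \]
has $C_s<\infty$, the series being eventually dominated by a convergent geometric one.

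The final, and most delicate, step is to strip off the constant $C_s$. Here I would exploit that $\Phi$ is an algebra homomorphism: applying the displayed bound to $f^n$ and using $\Phi(f^n)=\Phi(f)^n$ together with $\|f^n\|_{sB_X}=\|f\|_{sB_X}^n$ gives $\|\Phi(f)\|_{B_Y}\le C_s^{1/n}\|f\|_{sB_X}$, and letting $n\to\infty$ produces $\|\Phi(f)\|_{B_Y}\le\|f\|_{sB_X}$. Thus every $s>\limsup_{m}\|\Phi_m\|^{1/m}$ is an admissible radius, so $R(\Phi)\le\limsup_{m}\|\Phi_m\|^{1/m}$, closing the chain of inequalities. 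I expect the main obstacle to be precisely this constant-removal device (the same trick as in the scalar-valued proof), together with the justification that the Taylor series converges in $\mathcal H_b(X)$ so that $\Phi$ may be passed through the sum.
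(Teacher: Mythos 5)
Your proposal is correct, and its core --- the identity $R(\Phi)=\limsup_m\|\Phi_m\|^{1/m}$ --- runs along the same lines the paper invokes by citing the scalar-valued argument of \cite[Theorem 2.3]{AronColeGamelin}: Cauchy estimates plus termwise application of the continuous homomorphism $\Phi$ to the Taylor series give $\|\Phi(f)\|_{B_Y}\le C_s\|f\|_{sB_X}$ for every $s>\limsup_m\|\Phi_m\|^{1/m}$, the constant is stripped via $\Phi(f^n)=\Phi(f)^n$ and $\|f^n\|_{sB_X}=\|f\|_{sB_X}^n$, and the reverse bound comes from homogeneity, $\|\Phi(P)\|_{B_Y}\le\|P\|_{R(\Phi)B_X}=R(\Phi)^m\|P\|_{B_X}$. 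Where you genuinely diverge is in the second equality $\limsup=\sup$, which is precisely the part the paper flags as needing ``a slight change in the argument'': there it is derived from the inequality $\|\Phi_m\|^2\le\|\Phi_{2m}\|$ (quoted from \cite[page 55]{AronColeGamelin} for scalar homomorphisms and observed to persist in the vector-valued case), which gives $\|\Phi_m\|^{1/m}\le\|\Phi_{2m}\|^{1/2m}$ and hence forces the limit superior to coincide with the supremum. You instead notice that the homogeneity computation already yields the stronger bound $\sup_{m\ge1}\|\Phi_m\|^{1/m}\le R(\Phi)$, and close the cycle $\sup\le R\le\limsup\le\sup$, so both equalities fall out of a single chain. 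Both routes are valid; yours is more economical, dispensing with the doubling inequality altogether, while the paper's retains the structural information that $m\mapsto\|\Phi_m\|^{1/m}$ is nondecreasing along doubled indices --- of independent interest in view of Deghoul's example \cite{Deghoul}, cited right after the proposition, showing the limit superior need not be a limit. One cosmetic repair: in your definition of $C_s$ the expression $\left(\|\Phi_m\|^{1/m}/s\right)^m$ is meaningless at $m=0$; write $C_s=\sum_{m\ge0}\|\Phi_m\|\,s^{-m}$ (noting $\|\Phi_0\|=1$ since $\Phi(1)$ is a nonzero idempotent) and invoke the geometric domination only for $m$ large.
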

		
		\begin{proof}

The first equality follows the lines of the proof of \cite[Theorem 2.3]{AronColeGamelin}. For the second one we need a slight change in the argument. It is observed in \cite[page 55]{AronColeGamelin} that
\[
\|\varphi_m\|^2\le\|\varphi_{2m}\|\qquad\textrm{ for all } \varphi\in\M_b(X) \textrm{ and  }m\in\mathbb N.
\]
Thus the same is true for vector-valued homomorphisms. Hence,
\[
\|\Phi_m\|^{1/m}\le\|\Phi_{2m}\|^{1/2m}\qquad\textrm{ for all } \Phi\in\M_{b,\infty}(X,B_Y) \textrm{ and  }m\in\mathbb N,
\]
which implies that the limit superior should coincide with the supremum.
		\end{proof}

Note that the limit superior above is not necessarily a limit. In \cite{Deghoul} Deghoul exhibits an example of an homomorphism $\varphi$ in $\M_b(\ell_2)$ with $R(\varphi)\not=0$ and $\|\varphi_m\|=0$ for every odd $m$.

	\begin{remark}\rm
As we have already observed,  the role played by the evaluation homomorphisms $\delta_z$ in the scalar-valued spectrum is performed here by the composition homomorphisms $ C_g$. It is easy to see that  vector-valued versions of \cite[Lemma 3.1 and Lemma 3.2]{AronColeGamelin} are valid:
				\[\|\xi(\Phi)\| \leq R(\Phi), \ \Phi \in \mathcal{M}_{b,\infty}(X,B_Y), \]
			and also
				\[R( C_g) = \|g\|, \ g \in \mathcal{H}^{\infty}(B_Y,X^{**}). \]
		\end{remark}
			
		%\begin{proof}
%			For any given $\Phi \in \mathcal{M}_{b,\infty}(X,B_Y)$ we have that
%			\begin{align*}
%			\|\xi(\Phi)\| &= \sup_{\substack{y \in B_Y \\ x^* \in S_{X^*}}} |\xi(\Phi)(y)(x^*)| =   \sup_{\substack{y \in B_Y \\ x^* \in S_{X^*}}}|\Phi(y)(x^*)| \\
%			&= \sup_{x^* \in S_{X^*}} \|\Phi(x^*)\|_{B_Y} \leq \sup_{x^* \in S_{X^*}} \|x^*\|_{R(\Phi)B_X} = R(\Phi).
%			\end{align*}
%			Which gives us the first inequality. Note that, in particular, whenever $\Phi =  C_g$ we have that
%			\[ \|g\| = \|\xi( C_g)\| \leq R( C_g).  \]
%			On the other hand, we have that
%			\[ \| C_g(f)\|_{B_Y} = \|\overline{f}\circ g\|_{B_Y} \leq \|f\|_{\|g\|B_X}, \]
%			which gives us the desired equality.
%		\end{proof}

Let us now translate the radius function to the spectrum $\mathcal{M}_\infty(B_X,B_Y)$. For that, first we consider the	 natural projection
			\[ \varrho \colon \mathcal{M}_\infty(B_X,B_Y) \to \mathcal{M}_{b,\infty}(X,B_Y), \]
		defined so that $\varrho(\Psi)$ is the restriction of $\Psi \in \mathcal{M}_\infty(B_X,B_Y)$ to $\mathcal{H}_b(X).$

		We then extend the radius function $R$ to $\Psi \in \mathcal{M}_\infty(B_X,B_Y)$ by declaring $R(\Psi)$ to be the smallest value of $r$, $0 \leq r \leq 1$ such that $\Psi$ is continuous with respect to the norm of uniform convergence on the ball $rB_X$. Applying the previous results of this section and the fact that $\mathcal{M_\infty}(B_X,B_Y)$ is weak-star compact (as noted in item \ref{compact} of the observation regarding duality and compactness in the Introduction), the proof of \cite[Theorem 10.1]{AronColeGamelin} can be easily adapted to our setting, arriving at the following result.
		
		\begin{theorem}
			The image $\varrho(\mathcal{M}_\infty(B_X,B_Y))$ of the projection $\varrho$ consists of precisely the set of $\Phi \in \mathcal{M}_{b,\infty}(X,B_Y)$ such that $R(\Phi) \leq 1$. Moreover, the projection $\varrho$ establishes a one-to-one correspondence between the set of $\Psi \in \mathcal{M}_{\infty}(B_X,B_Y)$ satisfying $R(\Psi) < 1$ and the set of $\Phi \in \mathcal{M}_{b,\infty}(X,B_Y)$ satisfying $R(\Phi) < 1$.
		\end{theorem}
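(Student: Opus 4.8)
The plan is to prove the two assertions as vector-valued analogues of \cite[Theorem 10.1]{AronColeGamelin}, the only new feature being that the target is $\H^\infty(B_Y)$ rather than $\mathbb C$; since every estimate below is phrased through sup-norms, the scalar arguments transfer almost verbatim. The tools I would rely on are the contractivity of the homomorphisms, the weak-star compactness of $\M_\infty(B_X,B_Y)$ and of the sets $\M_{b,\infty}(X,B_Y)_R$ (items \ref{compact} and \ref{comment3} of the Introduction), and the density of polynomials in $\H^\infty(B_X)$ for the norms $\|\cdot\|_{rB_X}$ with $r<1$. First I would dispose of the easy inclusion together with the well-definedness of $\varrho$: a non-null homomorphism between these uniform algebras is unital (its value at $1$ is an idempotent of $\H^\infty(B_Y)$, hence $\equiv 1$ by connectedness of $B_Y$) and therefore norm-decreasing. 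Thus for $\Psi\in\M_\infty(B_X,B_Y)$ and $f\in\H_b(X)$ one has $\|\varrho(\Psi)(f)\|_{B_Y}=\|\Psi(f|_{B_X})\|_{B_Y}\le\|f\|_{B_X}$, so $\varrho(\Psi)\in\M_{b,\infty}(X,B_Y)$ with $R(\varrho(\Psi))\le 1$; this already gives $\varrho(\M_\infty(B_X,B_Y))\subseteq\{\Phi:\,R(\Phi)\le 1\}$, and the same estimate with $rB_X$ in place of $B_X$ shows that $\varrho$ sends $\{R(\Psi)<1\}$ into $\{R(\Phi)<1\}$.

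Next I would establish the bijection on the region $R<1$. For surjectivity, given $\Phi$ with $R(\Phi)=r_0<1$, fix $r_0<r<1$ and, for $f\in\H^\infty(B_X)$, approximate $f$ in $\|\cdot\|_{rB_X}$ by the Ces\`aro means $\sigma_N f$ of its Taylor series at the origin; these are polynomials, hence lie in $\H_b(X)$, and a direct estimate on the homogeneous expansion shows $\|\sigma_N f-f\|_{rB_X}\to 0$. Since $\Phi$ is $\|\cdot\|_{r_0B_X}$-continuous and $r_0<r$, the sequence $\Phi(\sigma_N f)$ is Cauchy in $\H^\infty(B_Y)$, and I would define $\Psi(f)$ to be its limit. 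Independence of the approximating sequence, multiplicativity (using that products of the approximants converge to the product in $\|\cdot\|_{rB_X}$), and the bound $\|\Psi(f)\|_{B_Y}\le\|f\|_{r_0B_X}$ are then routine, yielding $\Psi\in\M_\infty(B_X,B_Y)$ with $R(\Psi)\le r_0<1$; restricting to $\H_b(X)$ and using the same approximation recovers $\varrho(\Psi)=\Phi$. Injectivity is immediate: two homomorphisms with $R<1$ that agree on $\H_b(X)$ agree on all polynomials, and these are dense in each $(\H^\infty(B_X),\|\cdot\|_{rB_X})$ with $r<1$.

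Finally, for the closed image I would upgrade the inclusion to an equality by a dilation-plus-compactness argument covering the boundary case $R(\Phi)=1$. Given such a $\Phi$, for $0<t<1$ I would set $\Phi_t(f)=\Phi(f_t)$ with $f_t(x)=f(tx)$; then $\Phi_t\in\M_{b,\infty}(X,B_Y)$ and $R(\Phi_t)\le t<1$, so the previous step yields $\Psi_t\in\M_\infty(B_X,B_Y)$ with $\varrho(\Psi_t)=\Phi_t$. Taking a weak-star cluster point $\Psi$ of $(\Psi_t)$ as $t\to 1^-$, which exists by the weak-star compactness of $\M_\infty(B_X,B_Y)$, I would verify $\varrho(\Psi)=\Phi$ by evaluating on $f\in\H_b(X)$: along the convergent subnet $\Psi_t(f|_{B_X})(y)=\Phi(f_t)(y)$, and since $f_t\to f$ in the Fr\'echet topology of $\H_b(X)$ (a Cauchy-estimate computation) and $\Phi$ is continuous, $\Phi(f_t)(y)\to\Phi(f)(y)$. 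Hence $\varrho(\Psi)=\Phi$, completing the equality.

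The main obstacle I anticipate is precisely the polynomial-approximation step: one must ensure that the chosen means converge in the $\|\cdot\|_{rB_X}$-norm (not merely uniformly on compact sets), so that the $\|\cdot\|_{r_0B_X}$-continuity of $\Phi$ can be invoked, and that the resulting $\Psi$ is insensitive to the approximation used. Everything else, including the passage to the vector-valued target, is carried by sup-norm estimates identical to the scalar case, which is why the adaptation of \cite[Theorem 10.1]{AronColeGamelin} is routine once this density is in hand.
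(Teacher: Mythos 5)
Your proposal is correct and follows exactly the route the paper intends: it adapts \cite[Theorem 10.1]{AronColeGamelin} using the radius lemma, polynomial approximation in $\|\cdot\|_{rB_X}$ for $r<1$ to extend homomorphisms with $R(\Phi)<1$, and dilations $f_t(x)=f(tx)$ plus the weak-star compactness of $\M_\infty(B_X,B_Y)$ to capture the boundary case $R(\Phi)=1$, which is precisely the adaptation the paper invokes without writing out. As a minor simplification, the Ces\`aro means are unnecessary: since $\|P_mf(0)\|_{B_X}\le\|f\|_{B_X}$ by the Cauchy estimates, the Taylor partial sums themselves converge to $f$ geometrically in $\|\cdot\|_{rB_X}$ for every $r<1$.
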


\section{The fibering of $\mathcal{M}_{\infty}(B_X,B_Y)$ over $\overline B_{\mathcal{H}^\infty(B_Y,X^{**})}$ }\label{Section-Minf}

As in the  case of $\mathcal{M}_{b,\infty}(X,B_Y)$,  we can define a natural projection from the vector-valued spectrum $\mathcal{M}_{\infty}(B_X,B_Y)$ into $\mathcal{H}^\infty(B_Y,X^{**})$, by composing $\xi \colon \mathcal{M}_{b,\infty}(X,B_Y) \to \mathcal{H}^\infty(B_Y,X^{**})$ with $\varrho \colon \mathcal{M}_\infty(B_X,B_Y) \to \mathcal{M}_{b,\infty}(X,B_Y)$. In order to simplify the notation we choose to denote this projection again by $\xi$ (instead of $\xi\circ \varrho$). In this setting, $\xi$ is defined by:
\begin{align*}
			\xi \colon \M_\infty(B_X,B_Y) &\to \mathcal{H}^\infty(B_Y,X^{**}), \\
			\Phi & \mapsto \left[ y \mapsto (x^* \mapsto \Phi(x^*)(y)) \right].
		\end{align*}
The image of $\xi$ is clearly contained in the closed unit ball of $\mathcal{H}^\infty(B_Y,X^{**})$. Also, for each $g\in \mathcal{H}^\infty(B_Y,X^{**})$ such that $g(B_Y)\subset B_{X^{**}}$ we can consider the composition homomorphism $ C_g\in \M_\infty(B_X,B_Y)$ given by $ C_g(f)=\widetilde f\circ g$, for all $f\in\H^\infty(B_X)$. Since $\xi( C_g)=g$ the following inclusions hold:
\[
B_{\mathcal{H}^\infty(B_Y,X^{**})}\subset \{g\in \mathcal{H}^\infty(B_Y,X^{**}):\ g(B_Y)\subset B_{X^{**}}\}\subset Im(\xi).
\]
Note that, as we have already mentioned, by \cite[Theorem 2.1]{Mujica} the space $\mathcal{H}^\infty(B_Y,X^{**})$ is isometric to $\mathcal L(\mathcal G^\infty(B_Y), X^{**})$ and so it is the dual of $\G^\infty(B_Y)\widehat\otimes_\pi X^*$.
Now, as $\mathcal{M}_{\infty}(B_X,B_Y)$ is weak-star compact and $\xi$ is weak-star to weak-star continuous, the image of $\xi$ should be weak-star compact in $\mathcal{H}^\infty(B_Y,X^{**})$. Hence
\[
Im(\xi)=\overline {B}_{\mathcal{H}^\infty(B_Y,X^{**})}.
\]

Now, we turn our attention to the fibers defined by this projection. For $g\in \overline B_{\mathcal{H}^\infty(B_Y,X^{**})}$, the fiber over $g$ is the set
\[
 \mathscr{F}(g) = \{\Phi \in \mathcal{M}_{\infty}(B_X,B_Y) \colon \xi(\Phi) = g \}.
\]

For the scalar-valued spectrum $\mathcal{M}_{\infty}(B_X)$, to study the fibers over $\overline B_{X^{**}}$, the distinction between points $z$ in the interior of the ball (for which the evaluation $\delta_z$ is in the fiber) and points $z$ in the boundary (where $\delta_z$ cannot be defined) is relevant. In the vector-valued case recall that for a holomorphic function $g\in \overline B_{\mathcal{H}^\infty(B_Y,X^{**})}$ if $\|g(y_0)\|=1$ for a certain $y_0\in B_Y$ then $g(y)$ belongs to $S_{X^{**}}$ (the unit sphere of $X^{**}$) for all $y\in B_Y$. Thus, to distinguish the fibers over $g\in \overline B_{\mathcal{H}^\infty(B_Y,X^{**})}$ in terms of whether $ C_g$ is or is not defined, we get the following two possibilities for $g$:
\begin{enumerate}
\item[(i)] $g(B_Y)\subset B_{X^{**}}$ (where $ C_g\in  \mathscr{F}(g)$).
\item[(ii)] $g(B_Y)\subset S_{X^{**}}$ (where $ C_g$ cannot be defined).
\end{enumerate}

Note that whenever $X^{**}$ is strictly convex, the only functions $g\in \overline B_{\mathcal{H}^\infty(B_Y,X^{**})}$ with $g(B_Y)\subset S_{X^{**}}$ are the constant functions. Then, in this case  the condition (ii) changes to:
\begin{enumerate}
\item[(ii')] There exists $x_0^{**}\in S_{X^{**}}$ such that $g(y)=x_0^{**}$, for all $y\in B_Y$ (where $ C_g$ cannot be defined).
\end{enumerate}

Also, as we commented for the spectrum $\M_{b,\infty}(X,B_Y)$ in Remark \ref{Remark: constant}, there are other {\em special } fibers in $\mathcal{M}_{\infty}(B_X,B_Y)$ which are  those over constant functions $g\in \overline B_{\mathcal{H}^\infty(B_Y,X^{**})}$. Each of these fibers includes the scalar-valued fiber of $\M_\infty(B_X)$ over the same constant.

Recall that it is said that an element $x_0^{**}\in S_{X^{**}}$ is {\em norm attaining} if there exists $x_0^*\in S_{X^*}$ such that $x_0^{**}(x_o^*)=1$.

Now we show that, if $x_0^{**}\in S_{X^{**}}$ is norm attaining,  the fiber over the constant function $g(y)=x_0^{**}$   contains a lot of elements that do not arise from the scalar-valued spectrum. The proof is build on the following proposition from \cite{AronFalcoGarciaMaestre}.

\begin{proposition} \cite[Proposition 2.1]{AronFalcoGarciaMaestre}
	\label{inj-falco}
			Given a Banach space $X$ and a norm attaining element $x_0^{**}\in S_{X^{**}}$, there is an analytic injection
			\[ F: \mathbb{D}  \hookrightarrow \mathcal{M}_{x_0^{**}}(\H^\infty(B_X)).  \]
\end{proposition}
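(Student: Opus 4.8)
The plan is to reduce the statement to the one–dimensional algebra $\H^\infty(\D)$ and then transport a known analytic disk back to $\H^\infty(B_X)$ by means of the norm–attaining functional. Since $x_0^{**}$ is norm attaining, fix $x_0^*\in S_{X^*}$ with $x_0^{**}(x_0^*)=1$. Because $\|x_0^*\|=1$ we have $|x_0^*(x)|<1$ for every $x\in B_X$, so $b:=x_0^*|_{B_X}$ is a holomorphic map $b:B_X\to\D$, and composition with $b$ defines a unital algebra homomorphism
\[
C_b:\H^\infty(\D)\to\H^\infty(B_X),\qquad C_b(h)=h\circ b .
\]
Its transpose $C_b^{\#}(\varphi)=\varphi\circ C_b$ sends $\M(\H^\infty(B_X))$ to $\M(\H^\infty(\D))$, and if $\varphi$ lies in the fiber $\M_{x_0^{**}}(\H^\infty(B_X))$ then $C_b^{\#}(\varphi)(\mathrm{id}_\D)=\varphi(x_0^*)=x_0^{**}(x_0^*)=1$, so $C_b^{\#}$ carries this fiber into the boundary fiber $\M_1(\H^\infty(\D))$. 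The whole point will then be to build an \emph{analytic right inverse} of $C_b^{\#}$ over these two fibers.

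For the one–dimensional side I would invoke the classical fact (Hoffman) that $\M_1(\H^\infty(\D))$ contains an analytic injection $G:\D\hookrightarrow\M_1(\H^\infty(\D))$, obtained from an interpolating sequence $z_n\to 1$ via $G(t)=\lim_{\mathscr U}\delta_{\sigma_{z_n}(t)}$, where $\sigma_{z_n}(t)=\frac{z_n+t}{1+\bar z_n t}$ and $\mathscr U$ is a free ultrafilter. To lift $G$, use that $x_0^{**}$ is a weak–star limit of a net in $B_X$ (Goldstine), along which $x_0^*$ necessarily tends to $1$; combining this with the sequence $(z_n)$ one produces, for each $t\in\D$, points $\tilde x_n(t)\in B_X$ such that $\tilde x_n(t)\to x_0^{**}$ weak–star while $b(\tilde x_n(t))$ stays pseudohyperbolically close to $\sigma_{z_n}(t)$. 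One then defines
\[
F(t)(f)=\lim_{\mathscr U}\widetilde{f}\big(\tilde x_n(t)\big),\qquad f\in\H^\infty(B_X),
\]
the limit existing by the weak–star compactness of the relevant ball, exactly as in the proof of Theorem \ref{inyectar disco}. One checks that $F(t)$ is a homomorphism, that $\tilde x_n(t)\to x_0^{**}$ forces $\xi(F(t))=x_0^{**}$ so that $F(t)\in\M_{x_0^{**}}(\H^\infty(B_X))$, and that the pseudohyperbolic control yields $C_b^{\#}(F(t))=F(t)\circ C_b=G(t)$. Analyticity of $t\mapsto F(t)(f)$ follows because it is a weak–star limit of the uniformly bounded analytic maps $t\mapsto\widetilde{f}(\tilde x_n(t))$, and injectivity of $F$ is then immediate from $C_b^{\#}\circ F=G$ together with the injectivity of $G$.

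The main obstacle is precisely the lifting step: one must produce the points $\tilde x_n(t)$ so that two competing requirements hold simultaneously — weak–star convergence to $x_0^{**}$ (needed to land in the correct fiber) and faithful tracing of the one–dimensional disk in the $x_0^*$–coordinate (needed for injectivity). These pull in opposite directions, since weak–star convergence to $x_0^{**}$ pins the value $x_0^*(\tilde x_n(t))$ to the boundary point $1$. They are reconciled by controlling the pseudohyperbolic error $\rho\big(b(\tilde x_n(t)),\sigma_{z_n}(t)\big)\to 0$ faster than $1-|z_n|$, and using the Schwarz–Pick inequality to guarantee that the ultrafilter limit recovers $G(t)$ exactly; it is here that norm attainment of $x_0^{**}$ is indispensable, as it is what makes $x_0^{**}$ accessible as a weak–star limit while keeping the $x_0^*$–coordinate anchored at $1$. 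Once this compatibility is secured, the verifications of the homomorphism property, fiber membership, analyticity, and injectivity are routine.
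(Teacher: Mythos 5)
You have the right skeleton --- transfer to $\H^\infty(\D)$ through $C_b$, Hoffman's analytic disk $G(t)=\lim_{\mathscr U}\delta_{\sigma_{z_n}(t)}$ in the fiber over $1$, and injectivity via $C_b^{\#}\circ F=G$ --- and this is indeed the spirit of \cite[Proposition 2.1]{AronFalcoGarciaMaestre}, which the paper cites without reproducing a proof. But there is a genuine gap exactly where you place the ``main obstacle'': the points $\tilde x_n(t)$ are never constructed, and as formulated the construction is impossible in general. You require a \emph{sequence} in $B_X$ (indexed by the same $n$ as the interpolating sequence $(z_n)$, with an ultrafilter on $\mathbb N$) converging weak-star to $x_0^{**}$. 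Goldstine only yields a \emph{net}, and for many spaces no sequence will do: take $X=\ell_1$ and let $x_0^{**}$ be the weak-star limit along $\mathscr U$ of the unit vector basis $(e_n)$; then $x_0^{**}\in S_{\ell_1^{**}}\setminus \ell_1$ and it attains its norm at $x_0^*=(1,1,1,\dots)\in S_{\ell_\infty}$, yet since $\ell_1$ is weakly sequentially complete and has the Schur property, any sequence in $B_{\ell_1}$ that converges weak-star in $\ell_1^{**}$ converges in norm to an element of $\ell_1$. So for such $x_0^{**}$ no sequence $\tilde x_n(t)\subset B_X$ can satisfy $\lim_{\mathscr U}x^*(\tilde x_n(t))=x_0^{**}(x^*)$ for all $x^*$, and your fiber condition $\xi(F(t))=x_0^{**}$ fails. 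Passing to doubly indexed nets would force you to re-derive analyticity and the exact identity $C_b^{\#}(F(t))=G(t)$ through an iterated-limit argument you have not supplied; likewise the requirement that $\rho\bigl(b(\tilde x_n(t)),\sigma_{z_n}(t)\bigr)$ decay ``faster than $1-|z_n|$'' is postulated, not achieved.

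The repair --- and the actual route of \cite{AronFalcoGarciaMaestre} --- is to abandon points of $B_X$ altogether and evaluate the canonical extension at bidual points on the complex line through $x_0^{**}$: set $F(t)(f)=\lim_{\mathscr U}\widetilde f\bigl(\sigma_{z_n}(t)\,x_0^{**}\bigr)$, equivalently $F(t)=G(t)\circ A$ where $A\colon\H^\infty(B_X)\to\H^\infty(\D)$ is the slice homomorphism $A(f)(\lambda)=\widetilde f(\lambda x_0^{**})$. Then fiber membership is immediate, since $F(t)(x^*)=\lim_{\mathscr U}\sigma_{z_n}(t)\,x_0^{**}(x^*)=x_0^{**}(x^*)$; analyticity follows from uniformly bounded analytic approximants and weak-star compactness exactly as in the proof of Theorem \ref{inyectar disco}; and your injectivity mechanism works verbatim because $\widetilde{h\circ x_0^*}(\lambda x_0^{**})=h\bigl(\lambda\,x_0^{**}(x_0^*)\bigr)=h(\lambda)$, i.e.\ $A\circ C_b=\mathrm{id}_{\H^\infty(\D)}$ and hence $F(t)\circ C_b=G(t)$ exactly, with no pseudohyperbolic error estimates needed. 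Note that this identity is the one and only place where norm attainment ($x_0^{**}(x_0^*)=1$) enters --- not, as you suggest, in making $x_0^{**}$ ``accessible as a weak-star limit,'' since Goldstine holds for every element of $\overline B_{X^{**}}$. Your Schwarz--Pick transfer and the verification of the homomorphism property are fine, but without the corrected construction the decisive lifting step, and with it the whole proof, does not go through.
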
			
						
Note that this clearly holds for every $x_0\in S_X$. Now, to transfer this construction to the vector-valued spectrum recall that in Theorem \ref{Prop:constant_function} we have proved a similar result regarding the fibers over constant functions in  the spectrum $\M_{b,\infty}(X,B_Y)$   with the additional hypothesis of the existence of a polynomial that is not weakly continuous on bounded sets. The proof of the following result follows the lines of the proof of Theorem \ref{Prop:constant_function}, and so we omit it.
		\begin{proposition}
			\label{FibrasS_X}
			Given  Banach spaces $X$ and $Y$ and  a norm attaining element $x_0^{**}\in S_{X^{**}}$, let $g(y)=x_0^{**}$, for all $y\in B_Y$. Then there is an analytic injection
							\begin{align*}
					\Psi: B_{\H^\infty(B_Y)} &\hookrightarrow \mathscr{F}(g) \subset \M_\infty(B_X,B_Y)\\
					\Psi(h)(f)(y) &= F(h(y)) (f),
				\end{align*}
where $F$ is the mapping of the previous proposition.
				Moreover, each non-constant function in $B_{\H^\infty(B_Y)}$ is mapped into a non scalar-valued homomorphism of $\mathscr{F}(g)$.
		\end{proposition}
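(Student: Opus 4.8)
The plan is to follow verbatim the scheme of the proof of Theorem \ref{Prop:constant_function}, the only substantive change being that the disk injection $\Phi\circ\gamma$ supplied there by Theorem \ref{inyectar disco} is replaced by the injection $F\colon\D\hookrightarrow\M_{x_0^{**}}(\H^\infty(B_X))$ of Proposition \ref{inj-falco}, and that the domain algebra is now $\H^\infty(B_X)$ rather than $\H_b(X)$. Since $h\in B_{\H^\infty(B_Y)}$ forces $h(y)\in\D$ for every $y$, and since each $F(t)$ is a scalar-valued homomorphism lying in $\M(\H^\infty(B_X))$, the formula $\Psi(h)(f)(y)=F(h(y))(f)$ is meaningful; this is the analogue of the observation in Remark \ref{Remark: constant} that $\Phi\circ\gamma(t)$ is scalar-valued.

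First I would check that $\Psi(h)\in\M_\infty(B_X,B_Y)$ and that it lies in $\F(g)$. For fixed $f\in\H^\infty(B_X)$ the function $\Psi(h)(f)=[y\mapsto F(h(y))(f)]$ is holomorphic, as the composition of $[y\mapsto h(y)]$ with $[t\mapsto F(t)(f)]$, and is bounded by $\|f\|$ because every element of $\M(\H^\infty(B_X))$ has norm at most one; hence $\Psi(h)(f)\in\H^\infty(B_Y)$. Linearity and multiplicativity of $\Psi(h)$ are inherited pointwise in $y$ from the scalar homomorphisms $F(h(y))$, and $\Psi(h)$ is nonzero since it preserves the unit, so $\Psi(h)\in\M_\infty(B_X,B_Y)$. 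To place $\Psi(h)$ in the fiber over $g$ I would compute, for $x^*\in X^*$ and $y\in B_Y$,
\[
\xi(\Psi(h))(y)(x^*)=\Psi(h)(x^*)(y)=F(h(y))(x^*)=x_0^{**}(x^*)=g(y)(x^*),
\]
where the crucial third equality is precisely the statement that $F(h(y))\in\M_{x_0^{**}}(\H^\infty(B_X))$. Thus $\xi(\Psi(h))=g$.

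Next I would establish analyticity and injectivity of $\Psi$. For analyticity I would fix $f\in\H^\infty(B_X)$ and show that $[h\mapsto\Psi(h)(f)]$ is holomorphic from $B_{\H^\infty(B_Y)}$ into $\H^\infty(B_Y)$; by Lemma \ref{holomorphic mapping} it suffices to note local boundedness (the uniform bound $\|\Psi(h)(f)\|\le\|f\|$) together with the holomorphy of each $[h\mapsto\Psi(h)(f)(y)]=[h\mapsto F(h(y))(f)]$, the composition of the continuous linear evaluation $[h\mapsto h(y)]$ with $[t\mapsto F(t)(f)]$. Injectivity follows from that of $F$: if $\Psi(h_1)=\Psi(h_2)$ then $F(h_1(y))=F(h_2(y))$, hence $h_1(y)=h_2(y)$, for every $y$. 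Finally, for non-constant $h$ there are $y_1,y_2$ with $h(y_1)\neq h(y_2)$, whence $F(h(y_1))\neq F(h(y_2))$, so $\delta_{y_1}\circ\Psi(h)\neq\delta_{y_2}\circ\Psi(h)$ and $\Psi(h)$ cannot be of the form $\varphi\cdot1_Y$; this yields the final assertion.

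The step requiring most care, exactly as in Theorem \ref{Prop:constant_function}, is the analyticity of $h\mapsto\Psi(h)(f)$: one cannot argue directly in the $\H^\infty(B_Y)$-valued setting but must reduce to scalar evaluations through Lemma \ref{holomorphic mapping}, and this reduction is legitimate only because the global bound $\|\Psi(h)(f)\|\le\|f\|$ secures local boundedness on the open ball $B_{\H^\infty(B_Y)}$. Every other verification is formally identical to its counterpart in the proof of Theorem \ref{Prop:constant_function}, which is why the statement may safely be recorded without a detailed proof.
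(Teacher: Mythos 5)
Your proposal is correct and takes exactly the route the paper intends: the paper omits the proof of Proposition \ref{FibrasS_X} precisely because it ``follows the lines of the proof of Theorem \ref{Prop:constant_function}'', and your write-up carries out that adaptation faithfully, replacing $\Phi\circ\gamma$ by the injection $F$ of Proposition \ref{inj-falco}, substituting the uniform bound $\|\Psi(h)(f)\|\le\|f\|$ (homomorphisms of a uniform algebra have norm one) for the $\H_b$-estimate, and reducing analyticity to scalar evaluations via Lemma \ref{holomorphic mapping}. Your fiber computation $\xi(\Psi(h))(y)(x^*)=F(h(y))(x^*)=x_0^{**}(x^*)$ and the injectivity and non-scalar-valuedness arguments are the same as in Theorem \ref{Prop:constant_function}, so nothing further is needed.
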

		
		%\begin{proof}
%			It is easy to see that $\Psi$ is a well defined mapping from $B_{\H^\infty(B_Y)}$ into $\M_\infty(B_X,B_Y)$ and also that its image is inside $\mathscr{F}(g)$.
%			
%			To check  that $\Psi$ is injective, take $h \not = h' \in B_{\H^\infty(B_Y)}$. Then, there exists $y_0 \in B_Y$ such that $h(y_0) \not = h'(y_0)$. By the injectivity of $F$ it holds that $F(h(y_0)) \not = F(h'(y_0))$ as elements of the scalar spectrum $\M_\infty(B_X)$ so we can find $f \in \H^\infty(B_X)$ satisfying
%				\[ \Psi(h)(f)(y_0) = F(h(y_0))(f) \not = F(h'(y_0))(f) = \Psi(h')(f)(y_0) \]
%showing that $\Psi$ is indeed injective. Now, for each fixed $f\in\H^\infty(B_X)$, the map \begin{align*}
%					 B_{\H^\infty(B_Y)} &\rightarrow  \H^\infty(B_Y)\\
%					h &\mapsto [y\mapsto F(h(y)) (f)]
%				\end{align*} is bounded. So, by Lemma \ref{holomorphic mapping}, it is holomorphic if it is so when composed with $\delta_y$, for every $y$. This is the case since $\delta_y \circ \Psi$ is the composition of a linear and a holomorphic mapping: $[h\mapsto h(y)]$ and $[\lambda\mapsto F(\lambda)(f)]$. Hence, $\Psi$ is analytic.	
%		
%			Finally it is worth noting that if $h \in \H^\infty(B_Y)$ is not a constant function we can find $y_0, y_1 \in B_Y$ such that $h(y_0) \not = h(y_1)$. Again, the injectivity of $F$ leads to
%				\[ \delta_{y_0} \circ \Psi = F(h(y_0)) \not = F(h(y_1)) = \delta_{y_1} \circ \Psi, \]
%			which shows that $\Psi(h)$ can not be of the form $\varphi \cdot 1_Y$ for a scalar-valued homomorphism $\varphi$.
%					\end{proof}

For a finite dimensional $X$ we quote a conjecture from \cite[page 88]{AronColeGamelin}: {\em ``one expects that the fiber over $x_0$ consists of only the evaluation homomorphism $\delta_{x_0}$ for $x_0\in B_{X}$''}. We do not know whether this is true  but this is certainly the case for $B_X=\D$ and for each finite dimensional ball $B_X$ such that the Gleason problem is solved for $\H^\infty(B_X)$ (see \cite[6.6]{Rudin-FunctionTheoryCn} or \cite{carlsson, lemmers} and references therein), for instance, $X=\ell_p^n$, with $1<p<\infty$. In the context of a  strictly convex finite dimensional Banach space $X$ where the Gleason problem is solved for $\H^\infty(B_X)$ we have an almost complete depiction of the fibers of $\M_\infty(B_X,B_Y)$ which resembles the description of the fibers of $\M_\infty(B_X)$. The result, stated in the next theorem, is obtained just summing up the above comments and Proposition \ref{FibrasS_X}. We point out that item $(i)$ was previously proved for the ball of $\ell_2^n$ in \cite[Theorem 6.6.5]{Rudin-FunctionTheoryCn} and for the disk $\D$ in \cite[Proposition 15]{GalindoLindstrom}.

\begin{theorem} \label{propDisk} If $X$ is a strictly convex finite dimensional Banach space such that the Gleason problem is solved for $\H^\infty(B_X)$ then for any given $g \in \overline{B}_{\H^\infty(B_Y, X)}$ there are two alternatives for the fiber $\mathscr{F}(g)$:
			\begin{enumerate}
\item[(i)] If $g(B_Y) \subset B_X$, then $\mathscr{F}(g) = \{ C_g\}.$

				\item[(ii)] If $g \equiv x_0$ with $x_0\in S_{X}$, then $B_{\H^\infty(B_Y)}$ can be analytically injected in $\mathscr{F}(g).$
				
			\end{enumerate}
		\end{theorem}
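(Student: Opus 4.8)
The plan is to treat the two alternatives separately, in both cases reducing the vector-valued fiber to the corresponding scalar-valued fibers of $\M_\infty(B_X)$ through the pointwise device already used in Remark~\ref{Composition_morphism}. Since $X$ is finite dimensional we have $X=X^{**}$, so that $\H^\infty(B_Y,X^{**})=\H^\infty(B_Y,X)$ and the canonical extension satisfies $\widetilde f=f$; in particular $C_g(f)(y)=f(g(y))$ whenever $C_g$ is defined. For any $\Phi\in\M_\infty(B_X,B_Y)$ and $y\in B_Y$ the evaluation $\delta_y\circ\Phi\colon f\mapsto\Phi(f)(y)$ is an element of $\M_\infty(B_X)$, and the identity $\xi(\Phi)(y)(x^*)=\Phi(x^*)(y)=(\delta_y\circ\Phi)(x^*)$ shows that $\Phi\in\F(g)$ if and only if, for every $y\in B_Y$, the scalar homomorphism $\delta_y\circ\Phi$ lies in the fiber of $\M_\infty(B_X)$ over $g(y)$. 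I would also record that the two cases are exhaustive: by the strict convexity remark preceding the theorem, if $\|g(y_0)\|=1$ for some $y_0$ then $g$ is constantly equal to some $x_0\in S_X$, so the dichotomy $g(B_Y)\subset B_X$ versus $g\equiv x_0\in S_X$ covers every $g\in\overline B_{\H^\infty(B_Y,X)}$.

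For case (i), suppose $g(B_Y)\subset B_X$. Fix $\Phi\in\F(g)$ and $y\in B_Y$; then $\delta_y\circ\Phi$ belongs to the scalar fiber over the interior point $g(y)\in B_X$. Here I would invoke the fact, quoted in the paragraph before the statement, that whenever the Gleason problem is solved for $\H^\infty(B_X)$ this fiber is trivial, i.e. equal to $\{\delta_{g(y)}\}$. The argument is the classical one: writing $z=g(y)$, solvability of the Gleason problem lets one express any $f$ with $f(z)=0$ inside the ideal generated by the affine coordinate functions vanishing at $z$, so a homomorphism in the fiber (which annihilates each such coordinate function) must annihilate $f-f(z)$ and hence coincide with $\delta_z$. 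Consequently $\delta_y\circ\Phi=\delta_{g(y)}=\delta_y\circ C_g$ for every $y\in B_Y$, and since a homomorphism in $\M_\infty(B_X,B_Y)$ is determined by the scalars $\Phi(f)(y)$ (item~\ref{compact} of the Introduction), we conclude $\Phi=C_g$, whence $\F(g)=\{C_g\}$.

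For case (ii), write $g\equiv x_0$ with $x_0\in S_X$. Because $X$ is finite dimensional its unit sphere consists entirely of norm-attaining points: compactness of $\overline B_{X^*}$ (equivalently Hahn--Banach) yields $x_0^*\in S_{X^*}$ with $x_0^*(x_0)=1$. Thus the hypothesis of Proposition~\ref{FibrasS_X} is met with $x_0^{**}=x_0$, and that proposition provides directly the analytic injection $\Psi\colon B_{\H^\infty(B_Y)}\hookrightarrow\F(g)$, finishing this alternative.

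The genuinely substantive input is the triviality of the scalar fiber over interior points used in case (i); everything else---the pointwise reduction, the exhaustiveness of the two cases, and the finite-dimensional norm-attainment of case (ii)---is routine bookkeeping. The point I would be most careful about is transporting the pointwise characterization of Remark~\ref{Composition_morphism} from the $\H_b(X)$ setting to the $\H^\infty(B_X)$ setting: this is harmless, since convergence and determination in $\M_\infty(B_X,B_Y)$ are exactly pointwise in the values $\Phi(f)(y)$, but it should be made explicit before it is applied.
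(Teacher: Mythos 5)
Your proposal is correct and follows essentially the same route as the paper: reduce to the scalar fibers via $\delta_y\circ\Phi$, use triviality of the scalar fiber over interior points (coming from the solved Gleason problem) to get $\mathscr{F}(g)=\{C_g\}$ in case (i), and apply Proposition~\ref{FibrasS_X} in case (ii). The details you add beyond the paper's proof --- the explicit Gleason-problem argument for trivial scalar fibers, the norm-attainment of every $x_0\in S_X$ in finite dimensions, and the pointwise determination of elements of $\M_\infty(B_X,B_Y)$ --- are exactly the steps the paper leaves implicit or delegates to the preceding discussion, and they are all correct.
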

		
		\begin{proof} First, recall that $X$ being strictly convex implies the only two possibilities for a given $g\in \overline{B}_{\H^\infty(B_Y,X)}$ are those in the previous items.
Now, if $g$ satisfies $(i)$, for every $\Phi \in \mathscr{F}(g)$ and each $y\in B_Y$,  we have that $\delta_y \circ \Phi \in \M_\infty(B_X)$ is in the fiber over $g(y)\in B_X$. Since this fiber is a singleton $\{\delta_{g(y)}\}$, we easily infer that $\mathscr{F}(g) = \{ C_g\}$.
			If, whereas, $g \equiv x_0$ with $x_0\in S_{X}$, the result follows from Proposition \ref{FibrasS_X}.
					\end{proof}

A  lot of research is available in the literature about the basic simplest case  $X=Y=\mathbb C$ (that is, homomorphisms from $\H^\infty(\D)$ into $\H^\infty(\D)$). Anyway, through our construction we can give a slightly different description of the spectrum $\M_\infty(\D, \D)$ resembling the classical scalar-valued situation. Indeed, this vector-valued spectrum is projected onto $\overline B_{\H^\infty(\D)}$, being one-to-one over the set $\{g\in \overline B_{\H^\infty(\D)}:\ g(\D)\subset \D\}$. The remaining fibers (i. e. those over constant functions $g$ of modulus 1) are large, they have plenty of non scalar-valued homomorphisms and each one contains an analytic copy of $B_{\H^\infty(\D)}$.

For any infinite dimensional Banach space $X$ we know from \cite[Theorem 11.1]{AronColeGamelin} that each fiber of the spectrum $\M_\infty(B_X)$ contains a homeomorphic copy of $\beta(\mathbb N)$. This canonically translates to fibers over  constant functions $g\in \overline{B}_{\H^\infty(B_Y,X^{**})}$ in the spectrum $\M_\infty(B_X,B_Y)$. We can extend this result to fibers over (non-constant) functions $g$ of constant norm 1. Recall that $\beta(\mathbb{N})\setminus \mathbb N$ contains a homeomorphic copy of $\beta(\mathbb N)$  so it is enough to obtain a homeomorphic copy of $\beta(\mathbb{N})\setminus \mathbb N$ inside the  fiber.

\begin{proposition}
   			If $X$ is an infinite dimensional Banach space and $g \in \overline{B}_{\H^\infty(B_Y,X^{**})}$ is a function of constant norm $1$, then the fiber in $\M_\infty(B_X,B_Y)$ over $g$  contains a homeomorphic copy of $\beta(\mathbb{N})$.
   		\end{proposition}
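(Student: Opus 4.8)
The plan is to realize the elements of the fiber as weak-star limits, along \emph{free} ultrafilters, of composition homomorphisms attached to a sequence of inward perturbations of $g$. Fix a base point $y_0\in B_Y$ and write $w_0=g(y_0)\in S_{X^{**}}$. Since $X$, hence $X^*$, is infinite dimensional, the Josefson--Nissenzweig theorem provides a normalized sequence $(u_n)\subset S_{X^{**}}$ that is weak-star null, i.e. $u_n(x^*)\to 0$ for every $x^*\in X^*$. Choosing scalars $s_n\uparrow 1$ and $r_n\downarrow 0$ with $s_n+r_n<1$, I would set
\[
h_n(y)=s_n\,g(y)+r_n\,u_n .
\]
Each $h_n$ is holomorphic with $\|h_n(y)\|\le s_n+r_n<1$, so $h_n(B_Y)\subset B_{X^{**}}$ and the composition homomorphism $C_{h_n}\in\M_\infty(B_X,B_Y)$ is well defined; moreover $h_n(y)\to g(y)$ in norm for every $y$. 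For an ultrafilter $p$ on $\mathbb N$ I then define $\Phi_p$ by $\Phi_p(f)(y)=\lim_{p}\widetilde f(h_n(y))$. By the weak-star compactness of $\M_\infty(B_X,B_Y)$ recorded in item \ref{compact} of the Introduction, this limit exists and $\Phi_p$ is again a nonzero multiplicative element of $\M_\infty(B_X,B_Y)$.

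Next I would check which $\Phi_p$ land in $\F(g)$. For every $x^*\in X^*$ we have $\xi(\Phi_p)(y)(x^*)=\Phi_p(x^*)(y)=\lim_p h_n(y)(x^*)$; since the sequence $h_n(y)(x^*)\to g(y)(x^*)$ converges, this limit equals $g(y)(x^*)$ exactly when $p$ is free, whereas a principal ultrafilter $p=n_0$ returns $\xi(C_{h_{n_0}})=h_{n_0}\neq g$. This is precisely why one only expects a copy of $\beta(\mathbb N)\setminus\mathbb N$ inside $\F(g)$, as announced. Thus $\Phi_p\in\F(g)$ for all $p\in\beta(\mathbb N)\setminus\mathbb N$, and since for fixed $f$ and $y$ the assignment $p\mapsto\Phi_p(f)(y)=\lim_p\widetilde f(h_n(y))$ is just the canonical continuous extension to $\beta(\mathbb N)$ of a bounded sequence, the map $p\mapsto\Phi_p$ is weak-star continuous.

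It then remains to see that $p\mapsto\Phi_p$ is injective on $\beta(\mathbb N)\setminus\mathbb N$; being a continuous injection from the compact space $\beta(\mathbb N)\setminus\mathbb N$ into the Hausdorff space $\F(g)$, it is automatically a homeomorphism onto its image, and since $\beta(\mathbb N)\setminus\mathbb N$ contains a homeomorphic copy of $\beta(\mathbb N)$ the statement follows. For injectivity I would read everything at the base point: composing with $\delta_{y_0}$ gives the scalar homomorphisms $\psi_p=\delta_{y_0}\circ\Phi_p\in\M_\infty(B_X)$, $\psi_p(f)=\lim_p\widetilde f(\zeta_n)$, where $\zeta_n:=h_n(y_0)=s_n w_0+r_n u_n\to w_0$ weak-star. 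If the sequence $(\zeta_n)$ is $\H^\infty(B_X)$-interpolating, that is, for every $(a_n)\in\ell_\infty$ there is $f\in\H^\infty(B_X)$ with $\widetilde f(\zeta_n)=a_n$, then for distinct free ultrafilters $p\neq q$ a set $A\in p\setminus q$ together with the function realizing $(1_A(n))_n$ separates $\psi_p$ from $\psi_q$, whence $\Phi_p\neq\Phi_q$.

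The hard part is exactly this interpolation property of $\zeta_n=s_n w_0+r_n u_n$: it is the technical core of \cite[Theorem 11.1]{AronColeGamelin}, where the weak-star null ``spreading'' direction $u_n$ and the geometric radial rate $s_n$ are used to manufacture the required peak functions in $\H^\infty(B_X)$, and I would import that construction, passing $(u_n)$ to a basic subsequence if needed. The only genuinely new point in the vector-valued setting is the tension created by $\|g\|\equiv 1$: one cannot perturb $g$ outward and remain in $B_{X^{**}}$, nor can a holomorphic $h_n$ be forced to take a prescribed near-sphere value at the interior point $y_0$ while staying inward elsewhere. Building the perturbation in the intrinsic form $h_n=s_n g+r_n u_n$ with $s_n+r_n<1$ resolves this at once: it keeps every $h_n$ inside the open ball, drives $h_n\to g$, and still delivers at $y_0$ a sequence of the very type for which the Aron--Cole--Gamelin interpolation applies.
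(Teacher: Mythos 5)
Your overall scheme coincides with the paper's: perturb $g$ inward, take weak-star limits of the associated composition homomorphisms along ultrafilters (the paper packages this as the continuous extension $\beta I$, via the universal property of $\beta(\mathbb N)$, of $m\mapsto C_{g_m}$ with $g_m=r_m g$), check that free ultrafilters land in $\F(g)$, obtain injectivity by composing with $\delta_{y_0}$ and using an interpolating sequence, and finish with the observation --- already recorded in the paper just before the statement --- that $\beta(\mathbb N)\setminus\mathbb N$ contains a homeomorphic copy of $\beta(\mathbb N)$. The one step you do not carry out is the only hard one: that $(\zeta_n)=(s_n w_0+r_n u_n)$ admits an $\H^\infty(B_X)$-interpolating subsequence. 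Announcing that you ``would import'' the construction behind \cite[Theorem 11.1]{AronColeGamelin} does not close this, and it points at the wrong tool: that construction extracts separation from a weak-star null direction of \emph{fixed} size, whereas your $r_n\downarrow 0$ forces $\zeta_n\to w_0$ in norm, so the $u_n$'s carry no separating power of their own. Nor can you treat $\zeta_n$ as a harmless perturbation of the radial sequence $(s_n w_0)$: under your only constraint $s_n+r_n<1$, the pseudo-hyperbolic distance between $\zeta_n$ and $s_n w_0$ is of order $r_n/(1-s_n)$, which need not be small, so stability of interpolation is not automatic unless you additionally coordinate $r_n$ with $1-s_n$ --- and you never do.

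The gap is easy to close precisely because the Josefson--Nissenzweig term does no work anywhere in your argument: $\xi(\Phi_p)=g$ for free $p$ needs only $s_n\to 1$ and $r_n\to 0$ (weak-star nullity of $u_n$ is irrelevant there, since $\|u_n\|=1$ and $r_n\to0$ already give $h_n(y)(x^*)\to g(y)(x^*)$), the boundedness estimates and the continuity of $p\mapsto \Phi_p$ are unaffected, and injectivity is read entirely at $y_0$. Delete $u_n$, i.e.\ take $h_n=s_n g$; then $\zeta_n=s_n g(y_0)$ is a radial sequence at the norm-one point $w_0=g(y_0)$, and \cite[Theorem 10.5]{AronColeGamelin} --- the result the paper actually invokes --- states exactly that any such sequence with $s_n\to 1$ has an interpolating subsequence (after re-indexing, the pointwise convergence $h_n\to g$ is preserved). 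With that substitution your argument becomes, step for step, the paper's proof.
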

   		
   		\begin{proof}
   			Let $g \in \overline{B}_{\H^\infty(B_Y,X^{**})}$ be a function of constant norm 1 and fix $y_0 \in B_Y$. Since $\|g(y_0)\| = 1$, by \cite[Theorem 10.5]{AronColeGamelin}, for each $(r_n)_n\in \D$ with $r_n \to 1$ the sequence $(r_ng(y_0))$ has an interpolating subsequence for $\H^\infty(B_X)$ (which we still call $(r_ng(y_0))$).
   			We now write $g_n = r_ng$ and consider the mapping
   				\begin{align*}
   					I:\mathbb{N} &\to \overline{\{C_{g_n}\}}^{w^*} \subseteq \mathcal{M}_\infty(B_X,B_Y) \\
   					m &\mapsto C_{g_m}.
   				\end{align*}
   			By the universal property of $\beta\mathbb{N}$, there is a continuous extension $\beta I: \beta \mathbb{N} \to \overline{\{C_{g_n}\}}^{w^*}$ such that $\beta I|_{\mathbb{N}} = I$. Since $(r_ng(y_0))$ is an interpolating sequence, the composition $\delta_{y_0} \circ \beta I$ is injective and so must be $\beta I$.
   			
			Finally, a straightforward computation shows that for every $\eta\in \beta(\mathbb{N})\setminus \mathbb N$, the image $\beta I (\eta)$ lies in the fiber over $g$.
   		\end{proof}

From the above, we know that, when $X$ is infinite dimensional, fibers of $\M_\infty(B_X,B_Y)$ over constant functions or over functions of constant norm 1 are large. It is thus natural to ask whether the same is true for the remaining fibers, that is,  fibers over  non-constant functions $g\in \overline{B}_{\H^\infty(B_Y,X^{**})}$ with $g(B_Y)\subset B_{X^{**}}$. We have no general answer to this question. We can only say something under the hypothesis of existence of a polynomial that is not weakly continuous on bounded sets, and even in this case we can just reach the fibers over functions $g \in B_{\H^\infty(B_Y,X^{**})}$, as we present below. We do not know whether this result can be extended to fibers over functions $g$ of norm 1 such that $g(B_Y)\subset B_{X^{**}}$ or not.

\begin{theorem} \label{Inyectar disco Minf}
			If $X$ is a Banach space such that there exists a polynomial on $X$ which is not weakly continuous on bounded sets, then for each $g \in B_{\H^\infty(B_Y,X^{**})}$ we can inject the complex disk $\mathbb{D}$ analytically in the fiber $\mathscr{F}(g)$.
	 	\end{theorem}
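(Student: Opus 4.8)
The plan is to reproduce the construction in the proof of Theorem~\ref{inyectar disco}, modifying it so that the homomorphisms produced act on all of $\H^\infty(B_X)$ instead of only on $\H_b(X)$. The one genuinely new feature is that for $f\in\H^\infty(B_X)$ the canonical extension $\widetilde f$ is defined only on the open ball $B_{X^{**}}$, so every point fed into $\widetilde f$ must be kept strictly inside $B_{X^{**}}$. This is exactly where the hypothesis $g\in B_{\H^\infty(B_Y,X^{**})}$, that is $\|g\|<1$, is used, and it explains why the statement cannot presently reach functions of norm $1$.

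First I would fix, as in Theorem~\ref{inyectar disco}, an $m$-homogeneous polynomial $P$ whose extension $\widetilde P$ is nowhere weak-star continuous, set $x_0^{**}=g(0)$ (so $\|x_0^{**}\|\le\|g\|<1$), and choose $\varepsilon>0$ together with a bounded net $(x_\alpha^{**})$ that is weak-star convergent to $x_0^{**}$ and satisfies $|\widetilde P(x_\alpha^{**})-\widetilde P(x_0^{**})|>\varepsilon$ for all $\alpha$. Writing $M=\sup_\alpha\|x_\alpha^{**}-x_0^{**}\|$, I would pick a scaling factor $\lambda>0$ with $\|g\|+\lambda M<1$, fix an ultrafilter $\mathscr U$, and define for $t\in\D$
\[
\Phi_t(f)(y)=\lim_{\mathscr U}\widetilde f\big(g(y)+t\lambda(x_\alpha^{**}-x_0^{**})\big),\qquad f\in\H^\infty(B_X),\ y\in B_Y.
\]
The choice of $\lambda$ ensures $\|g(y)+t\lambda(x_\alpha^{**}-x_0^{**})\|\le\|g\|+|t|\lambda M<1$ for every $y\in B_Y$ and every $t\in\D$, so all arguments remain in $B_{X^{**}}$ and $|\Phi_t(f)(y)|\le\|f\|_{B_X}$.

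The verifications then run parallel to Theorem~\ref{inyectar disco}, but with the cleaner uniform bound $\|f\|_{B_X}$ in place of a sup over a larger ball. Holomorphy of $\Phi_t(f)$ in $y$ and existence of the ultrafilter limit follow from the weak-star compactness of the ball of $\H^\infty(B_Y)=(\mathcal G^\infty(B_Y))^*$; the homomorphism property follows because $\widetilde{\,\cdot\,}$ is multiplicative and the ultrafilter limit of a product of scalars is the product of the limits; and $\xi(\Phi_t)=g$ because $(x_\alpha^{**}-x_0^{**})(x^*)\to0$ for each $x^*\in X^*$. Since each $\Phi_t$ is a nonzero algebra homomorphism $\H^\infty(B_X)\to\H^\infty(B_Y)$, it lies in $\M_\infty(B_X,B_Y)$ (the radius bound $R(\Phi_t)\le\|g\|+|t|\lambda M<1$ is a consistency check via Section~\ref{Section-Radius function}). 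Analyticity of $t\mapsto\Phi_t$ is obtained as before: for fixed $f$ the maps $f_\alpha(t)(y)=\widetilde f(g(y)+t\lambda(x_\alpha^{**}-x_0^{**}))$ lie in $\|f\|_{B_X}\,\overline B_{\H^\infty(\D,\H^\infty(B_Y))}$, which by the identification $\H^\infty(\D,\H^\infty(B_Y))=(\mathcal G^\infty(\D)\widehat\otimes_\pi\mathcal G^\infty(B_Y))^*$ is weak-star compact, so the limit can be taken analytically in $t$.

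Finally, to build the injection I would evaluate on $P$ at $y=0$. The map $\Phi_t(P)(0)=\lim_{\mathscr U}\widetilde P(x_0^{**}+t\lambda(x_\alpha^{**}-x_0^{**}))$ is a polynomial in $t$ of degree $\le m$; its unscaled version takes values at the parameters $0$ and $1$ differing by at least $\varepsilon$, so it is non-constant, and hence so is $\Phi_t(P)(0)$. Choosing $t_0\in\D$ and $s>0$ with $\overline{\D(t_0,s)}\subset\D$ on which this polynomial is injective, and composing with $\gamma(t)=t_0+st$, I obtain the desired analytic injection $\Phi\circ\gamma:\D\to\mathscr F(g)\subset\M_\infty(B_X,B_Y)$. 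The main obstacle throughout is precisely the one flagged at the outset: forcing every argument $g(y)+t\lambda(x_\alpha^{**}-x_0^{**})$ to stay inside $B_{X^{**}}$, which requires the scaling and confines the argument to the regime $\|g\|<1$.
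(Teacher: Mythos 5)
Your proposal is correct and is essentially the paper's own (omitted) proof: the paper runs the construction of Theorem~\ref{inyectar disco} verbatim, keeping the perturbed points inside $B_{X^{**}}$ by choosing the net $(x^{**}_\alpha)$ within $B(x_0^{**},1-\|g\|)$, while you achieve the same thing by rescaling the direction with a factor $\lambda$ satisfying $\|g\|+\lambda M<1$, correctly noting that non-constancy of the limit polynomial survives the substitution $t\mapsto\lambda t$, so the injectivity argument is unharmed. If anything, your scaling variant is marginally more self-contained, since the paper's prescription tacitly requires knowing that the weak-star discontinuity of $\widetilde{P}$ at $x_0^{**}$ can be witnessed by a net confined to an arbitrarily small ball around $x_0^{**}$, a point your version never needs.
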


The proof is quite similar to that of Theorem \ref{inyectar disco}, so we omit it.  A slight change arises while choosing the net $(x_\alpha^{**})$ which should be taken in the ball $B(x_0^{**},1-\|g\|)$.

Also, mimicking the arguments of Theorem \ref{Prop:constant_function} we have an analogous result for the fibers over constant functions of norm smaller than 1.

\begin{theorem} \label{Inyectar bola Minf}
			If $X$ is a Banach space such that there exists a polynomial on $X$ which is not weakly continuous on bounded sets, then for each constant function $g \in B_{\mathcal{H}^\infty(B_Y,X^{**})}$ we can inject the ball $B_{\H^\infty(B_Y)}$ analytically in the fiber $\mathscr{F}(g)$. Moreover, through this inclusion each non-constant function in $B_{\H^\infty(B_Y)}$ is mapped into a non scalar-valued homomorphism of $\mathscr{F}(g)$.
		\end{theorem}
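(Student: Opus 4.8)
The plan is to transcribe the proof of Theorem~\ref{Prop:constant_function} into the $\M_\infty$ setting, substituting Theorem~\ref{Inyectar disco Minf} for Theorem~\ref{inyectar disco}. Fix the constant function $g\equiv x_0^{**}$ with $\|x_0^{**}\|<1$. Since $g\in B_{\H^\infty(B_Y,X^{**})}$, Theorem~\ref{Inyectar disco Minf} furnishes an analytic injection $F\colon\D\to\F(g)\subset\M_\infty(B_X,B_Y)$. Because $g$ is constant, the ultrafilter formula behind that theorem (exactly as recorded for the bounded-type case in Remark~\ref{Remark: constant}) produces homomorphisms $F(t)$ that do not depend on the $B_Y$-variable, hence are scalar-valued; I will identify each $F(t)$ with a scalar-valued homomorphism lying in the fiber of $\M_\infty(B_X)$ over $x_0^{**}$. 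I then define
\[
\Psi\colon B_{\H^\infty(B_Y)}\to\F(g),\qquad \Psi(h)(f)(y)=F(h(y))(f),
\]
for $h\in B_{\H^\infty(B_Y)}$, $f\in\H^\infty(B_X)$ and $y\in B_Y$; this is meaningful since $\|h\|_\infty<1$ forces $h(y)\in\D$ for every $y$.

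For well-definedness I would argue as in Theorem~\ref{Prop:constant_function}: the function $\Psi(h)(f)$ is holomorphic, being the composition of the holomorphic map $[y\mapsto h(y)]$ with the analytic map $[t\mapsto F(t)(f)]$, and it is bounded because each $F(t)$ is a nonzero (hence unital, hence norm-one) character of the uniform algebra $\H^\infty(B_X)$, so that $\sup_{y\in B_Y}|\Psi(h)(f)(y)|\le\|f\|_{B_X}$. That $\Psi(h)$ lands in $\F(g)$ follows from the direct computation $\xi(\Psi(h))(y)(x^*)=F(h(y))(x^*)=x_0^{**}(x^*)=g(y)(x^*)$. To see that $\Psi$ is analytic I would fix $f$ and apply Lemma~\ref{holomorphic mapping} to the locally bounded map $[h\mapsto\Psi(h)(f)]$: it suffices to check that $[h\mapsto F(h(y))(f)]$ is holomorphic for each $y$, and this is the composition of the bounded linear evaluation $[h\mapsto h(y)]$ with the analytic map $[t\mapsto F(t)(f)]$. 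Injectivity of $\Psi$ is inherited from that of $F$, and the final assertion follows verbatim from Theorem~\ref{Prop:constant_function}: if $h$ is non-constant, choosing $y_1,y_2$ with $h(y_1)\neq h(y_2)$ gives $F(h(y_1))\neq F(h(y_2))$, so $\Psi(h)$ cannot be of the form $\varphi\cdot 1_Y$ and is therefore non scalar-valued.

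Since each ingredient reproduces a step already established, I expect no real obstacle. The only point deserving genuine care is confirming the scalar-valued nature of $F(t)$ over a constant $g$---the $\M_\infty$-analogue of Remark~\ref{Remark: constant}---because this is precisely what legitimizes evaluating $F$ at the scalar $h(y)$ and writing $\Psi$ as a composition of two holomorphic maps. This is immediate once one inspects the defining limit for $F$ with $g(y)\equiv x_0^{**}$, whence I omit it and treat the result as a routine adaptation.
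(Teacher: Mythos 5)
Your proposal is correct and matches the paper's intended argument: the paper itself omits the proof, stating only that it is obtained by ``mimicking the arguments of Theorem~\ref{Prop:constant_function}'' with the disk injection now supplied by Theorem~\ref{Inyectar disco Minf}, which is exactly the transcription you carry out, including the key observation (the $\M_\infty$-analogue of Remark~\ref{Remark: constant}) that for constant $g$ the ultrafilter formula yields scalar-valued homomorphisms $F(t)$, legitimizing the definition $\Psi(h)(f)(y)=F(h(y))(f)$. Your only deviation---bounding $\Psi(h)(f)$ by $\|f\|_{B_X}$ via the norm-one character property instead of the $\|f\|_{(\|g\|+M+\|x_0^{**}\|)B_X}$ estimate used in the $\H_b$ setting---is a harmless simplification appropriate to $\H^\infty(B_X)$.
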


A typical example of a space where all the polynomials are weakly continuous on bounded sets is the sequence space $c_0$. The study of the algebra $\H^\infty(B_{c_0})$ is interesting also because $B_{c_0}$ is the natural infinite dimensional extension of the polydisk $\D^n$. Even though the previous results do not apply to $X=c_0$, it is anyway possible in this case to insert analytic copies of balls into the fibers of the vector-valued spectrum. This result and a thorough study about $\M_\infty(B_{c_0}, B_{c_0})$ will appear in a forthcoming article \cite{DimantSinger2}.

\section{Gleason parts for $\mathcal{M}_{\infty}(B_X,B_Y)$ }\label{Section-Gleason parts}

The study of Gleason parts in the spectrum of uniform algebras was motivated by the search for analytic structure. This is justified by the fact that the image of an analytic mapping from an open convex set into the spectrum should be contained in a single Gleason part. A thorough description of Gleason parts for the spectrum $\M_\infty(\D)$ was made by K. Hoffman in \cite{Hoffman} and later on deeply studied by several authors (see, for instance, \cite{Gorkin, mortini, Suarez}). For an infinite dimensional Banach space $X$ the study of Gleason parts for $\M_\infty(B_X)$ (with special emphasis on the case $X=c_0$) was initiated in \cite{AronDimantLassalleMaestre}. Let us recall this notion.

For a uniform algebra $\A$ with spectrum $\M(\A)$, the {\em pseudo-hyperbolic distance} in the spectrum is given by $\rho(\varphi, \psi) =
\sup\{|\varphi(f)|:  \  f \in \A, \|f\| \leq 1, \psi(f) = 0 \}$. Note that  $\rho(\varphi, \psi)\le 1$ and that this notion is related to the usual metric in the spectrum by the following known equality \cite[Theorem~2.8]{Bear}:
\begin{equation}\label{Gleason-metric}
\|\varphi - \psi\| = \frac{2 - 2\sqrt{1 - \rho(\varphi, \psi)^2}}{\rho(\varphi,\psi)}.
\end{equation}

This means that $\|\varphi - \psi\| = 2 $ if and only if $\rho(\varphi, \psi) = 1$, motivating
for each $\varphi \in \mathcal M(\mathcal A)$, the definition of the {\em Gleason part} of $\varphi$  as
the set
\[\mathcal {GP}(\varphi) = \{ \psi : \ \rho(\varphi,\psi) < 1\}=\{\psi : \ \|\varphi - \psi\| < 2\}.\]
An interesting aspect here is that these sets form a partition of $\M(\A)$ into equivalence classes.

We can extend the notion of Gleason part to the vector-valued spectrum. Indeed, for $\Phi\in\M_\infty(B_X, B_Y)$ we set
\[\mathcal {GP}(\Phi) = \{\Psi : \ \|\Phi - \Psi\| < 2\}=\{ \Psi : \ \sigma(\Phi, \Psi) =\sup_{y\in B_Y}\rho(\delta_y\circ\Phi,\delta_y\circ\Psi) < 1\}.\]
The equality between the above sets is clear from its analogous statement  in the scalar-valued spectrum (recall that $\delta_y\circ\Phi$ belongs to $\M_\infty(B_X)$ for each $\Phi\in\M_\infty(B_X, B_Y)$ and $y\in B_Y$). It is also readily seen, appealing again to the scalar-valued result, that Gleason parts lead to a partition of $\M_\infty(B_X, B_Y)$ into equivalence classes. This notion, without the  specific name of {\em Gleason parts}, was previously considered in several articles (see, for instance, \cite{AronGalindoLindstrom, chu-hugli-mackey,GalindoGamelinLindstrom,GorkinMortiniSuarez, HozokawaIzuchiZheng,MacCluerOhnoZhao}).

In \cite{AronDimantLassalleMaestre} the relationship between fibers and Gleason parts for $\M_\infty(B_X)$ was addressed.
Some of the results of that article have a vector-valued counterpart that we now present. We begin by proving a version of \cite[Proposition 1.1]{AronDimantLassalleMaestre} that shows which fibers might share Gleason parts.

Here the notation $C_{\mathbf 0}$ refers to the composition homomorphism by the constant function $g\equiv 0$.

\begin{proposition}{\label{basic 1}}
Let $X$ and $Y$ be Banach spaces.
\begin{enumerate}[\upshape (a)]
\item For every $g \in B_{\mathcal{H}^\infty(B_Y,X^{**})}$, the composition homomorphism $ C_g$ is contained in the Gleason part $\mathcal{GP}(C_{\mathbf 0})$. In fact, $\sigma( C_g,C_{\mathbf 0}) = \|g\|$.
\item Let $g \in S_{\mathcal{H}^\infty(B_Y,X^{**})}$ and $h\in B_{\mathcal{H}^\infty(B_Y,X^{**})}$. For any  $\Phi \in \mathscr{F}(g)$ and $\Psi \in \mathscr{F}(h)$ we have that $\Phi$ and $\Psi$ lie in different Gleason parts.
 \item Let $g,\, h\in \mathcal{H}^\infty(B_Y,X^{**})$ with $g(B_Y)\subset B_{X^{**}}$ and $h(B_Y)\subset S_{X^{**}}$. For any   $\Phi \in \mathscr{F}(g)$ and $\Psi \in \mathscr{F}(h)$ we have that
 $\Phi$ and $\Psi$ lie in different Gleason parts.
\end{enumerate}
\end{proposition}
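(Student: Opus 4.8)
My plan is to reduce all three assertions to the scalar spectrum $\M_\infty(B_X)$ through the slices $\delta_y\circ\Phi$, using the recorded identity $\sigma(\Phi,\Psi)=\sup_{y\in B_Y}\rho(\delta_y\circ\Phi,\delta_y\circ\Psi)$ together with the fact that $\delta_y\circ\Phi$ lies in the scalar fiber over $\xi(\Phi)(y)=g(y)$ and $\delta_y\circ\Psi$ over $h(y)$ (immediate from the definition of $\xi$). The engine for the lower estimates will be a one–dimensional Blaschke construction. Fix $y\in B_Y$ and $x^*\in\overline B_{X^*}$, and set $a=g(y)(x^*)$ and $b=h(y)(x^*)$. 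If $|b|<1$, then $1-\overline b\,x^*$ is invertible in $\H^\infty(B_X)$ (it is bounded below by $1-|b|>0$ on $B_X$), so $f=(x^*-b)(1-\overline b\,x^*)^{-1}$ lies in $\overline B_{\H^\infty(B_X)}$; since $(\delta_y\circ\Psi)(x^*)=b$ one gets $(\delta_y\circ\Psi)(f)=0$ and $(\delta_y\circ\Phi)(f)=\frac{a-b}{1-\overline b\,a}$. This yields the basic bound
\[
\rho(\delta_y\circ\Phi,\delta_y\circ\Psi)\ \ge\ \left|\frac{a-b}{1-\overline b\,a}\right|,\qquad\text{for any }x^*\text{ with }|b|<1.
\]

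For (a) I would use that $C_g$ and $C_{\mathbf 0}$ are composition homomorphisms, so $\delta_y\circ C_g=\delta_{g(y)}$ and $\delta_y\circ C_{\mathbf 0}=\delta_0$, giving $\sigma(C_g,C_{\mathbf 0})=\sup_{y}\rho(\delta_{g(y)},\delta_0)$. Taking $b=0$ in the bound above produces $\rho(\delta_{g(y)},\delta_0)\ge|g(y)(x^*)|$ for every $x^*$, whence $\sigma(C_g,C_{\mathbf 0})\ge\sup_y\|g(y)\|=\|g\|$; for the reverse inequality I would apply the Schwarz lemma to the extension $\widetilde f\colon B_{X^{**}}\to\D$ with $\widetilde f(0)=0$, obtaining $|\widetilde f(g(y))|\le\|g(y)\|$ and hence $\rho(\delta_{g(y)},\delta_0)\le\|g(y)\|$. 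Together these give $\sigma(C_g,C_{\mathbf 0})=\|g\|<1$, so $C_g\in\mathcal{GP}(C_{\mathbf 0})$.

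For (b) and (c), since the Gleason parts partition $\M_\infty(B_X,B_Y)$ and $\sigma\le 1$ always, it is enough to prove $\sigma(\Phi,\Psi)=1$. By the Blaschke bound this reduces to making $\left|\frac{a-b}{1-\overline b\,a}\right|$ arbitrarily close to $1$, and here I would use
\[
1-\left|\frac{a-b}{1-\overline b\,a}\right|^2=\frac{(1-|a|^2)(1-|b|^2)}{|1-\overline b\,a|^2}
\]
together with $|1-\overline b\,a|\ge 1-\min(|a|,|b|)$: the left–hand side tends to $0$ as soon as one of $|a|,|b|$ approaches $1$ while the other stays bounded away from $1$. In case (b) one has $\sup_{y,x^*}|g(y)(x^*)|=\|g\|=1$ and $|h(y)(x^*)|\le\|h\|<1$, so choosing $(y_n,x_n^*)$ with $|g(y_n)(x_n^*)|\to1$ forces the estimate to $1$. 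In case (c), for each fixed $y$ the values $|g(y)(x^*)|\le\|g(y)\|<1$ stay uniformly away from $1$, while $\sup_{x^*}|h(y)(x^*)|=\|h(y)\|=1$; letting $|h(y)(x^*)|\to1$ through values $<1$ again drives the bound to $1$, in fact yielding $\rho(\delta_y\circ\Phi,\delta_y\circ\Psi)=1$ for every $y$. Either way $\sigma(\Phi,\Psi)=1$, so $\Phi$ and $\Psi$ lie in different Gleason parts.

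The one step needing care is the admissibility of the Blaschke factor, namely keeping $b=h(y)(x^*)$ strictly inside $\D$ so that $1-\overline b\,x^*$ remains invertible; this is automatic in (b), whereas in (c) I must approach the (possibly non-attained) value $\|h(y)\|=1$ through functionals with $|h(y)(x^*)|<1$. The only other delicacy is that the suprema defining $\|g\|$ and $\|h(y)\|$ need not be attained, so the equalities $\sigma=1$ will be obtained as limits rather than at a single extremal pair.
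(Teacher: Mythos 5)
Your proof is correct, and it diverges from the paper's in an interesting way in parts (b) and (c). In (a) you do essentially what the paper does: the lower bound comes from testing against linear functionals $x^*\in S_{X^*}$ (your Blaschke bound with $b=0$), which suffices because $S_{X^*}$ norms $X^{**}$, and the upper bound is the Schwarz lemma applied to $\widetilde f$; this matches the paper's computation $\rho(\delta_y\circ C_g,\delta_y\circ C_{\mathbf 0})=\|g(y)\|$ verbatim. In (b), however, the paper never inverts anything: it tests against the normalized power differences $f_{n,m}=\bigl((x_n^*)^m-\lambda_n^m\bigr)/\left\| (x_n^*)^m-\lambda_n^m\right\|$ with $\lambda_n=h(y_n)(x_n^*)$, and reaches $\sigma(\Phi,\Psi)=1$ only through the double limit $\sup_{n,m}\frac{|g(y_n)(x_n^*)|^m-\|h\|^m}{1+\|h\|^m}=1$ (first $n\to\infty$, then $m\to\infty$), whereas your genuine Blaschke factor $(x^*-b)(1-\overline b\,x^*)^{-1}$ gives the sharper pointwise estimate $\rho(\delta_y\circ\Phi,\delta_y\circ\Psi)\ge\left|\frac{a-b}{1-\overline b\,a}\right|$ in a single step, after which the identity $1-\left|\frac{a-b}{1-\overline b\,a}\right|^2=\frac{(1-|a|^2)(1-|b|^2)}{|1-\overline b\,a|^2}$ settles both (b) and (c) by a one-parameter limit. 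The price of your route is that you must apply the slices $\delta_y\circ\Phi$, $\delta_y\circ\Psi$ to an inverse in the Banach algebra; this is legitimate because each slice is a character in $\M_\infty(B_X)$ (as $\Phi(1)=1$, which the paper also uses implicitly), and characters respect inverses — alternatively one can expand $(1-\overline b\,x^*)^{-1}=\sum_k\overline b^k(x^*)^k$, norm-convergent since $|b|<1$, and use continuity; it would have been worth saying one of these explicitly, but it is not a gap. The contrast is sharpest in (c): the paper simply cites \cite[Proposition 1.1]{AronDimantLassalleMaestre} for the scalar fibers over $g(y)\in B_{X^{**}}$ and $h(y)\in S_{X^{**}}$, while your argument reproves that scalar fact self-containedly, with the right care taken to approach $\|h(y)\|=1$ through functionals with $|h(y)(x^*)|<1$ (e.g.\ by scaling $x^*$). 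In sum: the paper's power trick stays within polynomials in $x^*$ and avoids inversion at the cost of a two-index limit and an external citation; your M\"obius computation is more self-contained and recovers the exact pseudo-hyperbolic geometry of the values $a=g(y)(x^*)$, $b=h(y)(x^*)$.
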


\begin{proof}
(a) By the Schwarz lemma and the fact that $S_{X^*}$ is a norming set for $X^{**}$ we obtain
\begin{eqnarray*}
\rho(\delta_y\circ C_g,\delta_y\circ C_{\mathbf 0}) &= &\sup\{|\delta_y\circ C_g(f)|: f\in\H^\infty(B_X),\ \|f\| \leq 1, \delta_y\circ C_{\mathbf 0}(f) = 0 \}\\
&= & \sup\{|\overline f(g(y))|: f\in\H^\infty(B_X),\ \|f\| \leq 1, f(0) = 0 \}= \|g(y)\|.
\end{eqnarray*}
Hence,  $\sigma( C_g,C_{\mathbf 0}) = \|g\|$.

(b) Since $g \in S_{\mathcal{H}^\infty(B_Y,X^{**})}$ there exist sequences $(y_n)\subset B_Y$ and $(x_n^*)\subset S_{X^*}$ such that $g(y_n)(x_n^*)\to 1$. Let us take $\lambda_n= h(y_n)(x_n^*)$ and note that $|\lambda_n|\le \|h\|<1$, for every $n$. We consider, for each $n$ and $m$, the following function defined on $B_X$:
\[f_{n,m}(\cdot) = \frac{(x^*_n(\cdot))^m - \lambda_n^m}{\| (x^*_n)^m - \lambda_n^m  \|}.\]
It is clear that $f_{n,m}$ belongs to $\H^\infty(B_X)$, $\Psi(f_{n,m})(y_n)=0$ and $\|f_{n,m}\|=1$. Thus,
\begin{eqnarray*}
\sigma(\Phi, \Psi) & = & \sup_{y\in B_Y} \sup\{ |\Phi(f)(y)|:\ f\in \H^\infty(B_X),\, \|f\|\le 1,\, \Psi(f)(y)=0\}\\
&\ge & \sup_{n,m} |\Phi(f_{n,m})(y_n)|=\sup_{n,m} \frac{|g(y_n)(x_n^*)^m-\lambda_n^m|}{\| (x^*_n)^m - \lambda_n^m  \|}\\
&\ge & \sup_{n,m}\frac{|g(y_n)(x_n^*)|^m-\|h\|^m}{1+\|h\|^m}=1.
\end{eqnarray*}
Consequently,  $\Phi$ and $\Psi$ lie in different Gleason parts.

(c) For any $y\in B_Y$ we know that $\delta_y\circ\Phi$ and $\delta_y\circ\Psi$ are in the fibers (with respect to the scalar-valued spectrum $\M_\infty(B_X)$) over $g(y)\in B_{X^{**}}$ and $h(y)\in S_{X^{**}}$, respectively. By \cite[Proposition 1.1]{AronDimantLassalleMaestre}, $\rho(\delta_y\circ\Phi, \delta_y\circ\Psi)=1$ and hence $\sigma(\Phi, \Psi)=1$.
\end{proof}

The statement of item (a) of the previous proposition, for functions $g \in B_{\mathcal{H}^\infty(B_Y,X)}$, was proved in \cite[Proposition 3]{AronGalindoLindstrom}. Also, item (b), in the particular case where $\Phi$ and $\Psi$ are composition homomorphisms, appeared in \cite[Proposition 5]{AronGalindoLindstrom}.

Recall that in the previous section we separated the fibers over functions $g\in\overline B_{\H^\infty(B_Y,X^{**})}$ into two cases:
\begin{enumerate}
\item[(i)] $g(B_Y)\subset B_{X^{**}}$.
\item[(ii)] $g(B_Y)\subset S_{X^{**}}$.
\end{enumerate}
Now, in light of the above proposition, to study Gleason parts it is relevant to split the first condition to distinguish whether the norm of $g$ is either 1 or smaller. Hence, the possible fibers to consider (with no intersection of Gleason parts) are:
\begin{enumerate}[(i)]
\item Fibers over functions $g$ with  $\|g\|<1$. Referred to as {\em interior fibers}.
\item Fibers over functions $g$ with $g(B_Y)\subset B_{X^{**}}$ and $\|g\|=1$. Referred to as {\em middle fibers}.
\item Fibers over functions $g$ with $g(B_Y)\subset S_{X^{**}}$. Referred to as {\em edge fibers}.
\end{enumerate}

Note also that, from (a), we have  $\{ C_g:\ g\in B_{\mathcal{H}^\infty(B_Y,X^{**})}\}\subset \mathcal{GP}(C_{\mathbf 0})$. This inclusion could be strict, for instance, when there is a polynomial on $X$ which is not weakly continuous on bounded sets, as the following result shows. This is a vector-valued version of \cite[Proposition 1.2 and Corollary 1.3]{AronDimantLassalleMaestre}.

\begin{proposition}{\label{basic 2}}
Let $X$ and $Y$ be Banach spaces.
\begin{enumerate}[\upshape (a)]
\item Let $(g_\alpha)$ be a net in $B_{\mathcal{H}^\infty(B_Y,X^{**})}$ with $\|g_\alpha\|\le r<1$, for all $\alpha$. If the net of composition homomorphisms $(C_{g_\alpha})$ is weak-star convergent to an element $\Phi$ in $\M_\infty(B_X, B_Y)$ then $\Phi$ is contained in the Gleason part $\mathcal{GP}(C_{\mathbf 0})$.
\item If there exists a polynomial on $X$ which is not weakly continuous on bounded sets, then $\{ C_g:\ g\in B_{\mathcal{H}^\infty(B_Y,X^{**})}\}$ is a proper subset of  $\mathcal{GP}(C_{\mathbf 0})$.
\end{enumerate}
\end{proposition}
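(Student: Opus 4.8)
The plan is to treat the two items separately, with Proposition~\ref{basic 1}(a) and the Schwarz lemma supplying the quantitative control in both.

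\textbf{Item (a).} First I would reduce $\sigma(\Phi,C_{\mathbf 0})$ to a single supremum. Since $C_{\mathbf 0}(f)(y)=\widetilde f(0)=f(0)$, we have $\delta_y\circ C_{\mathbf 0}=\delta_0$ for every $y\in B_Y$, and therefore
\[
\sigma(\Phi,C_{\mathbf 0})=\sup_{y\in B_Y}\rho(\delta_y\circ\Phi,\delta_0)=\sup\{\|\Phi(f)\|_{B_Y}:\ f\in\H^\infty(B_X),\ \|f\|\le 1,\ f(0)=0\}.
\]
For such an $f$ the Schwarz lemma (used exactly as in the proof of Proposition~\ref{basic 1}(a)) gives, for every $\alpha$ and every $y$, the bound $|C_{g_\alpha}(f)(y)|=|\widetilde f(g_\alpha(y))|\le\|g_\alpha(y)\|\le r$. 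I would then pass to the weak-star limit: by item~\ref{compact} of the Introduction, weak-star convergence of $(C_{g_\alpha})$ to $\Phi$ means $C_{g_\alpha}(f)(y)\to\Phi(f)(y)$ for all $f$ and $y$, so $|\Phi(f)(y)|\le r$ for every admissible $f$ and every $y$. Taking the supremum over $f$ and $y$ yields $\sigma(\Phi,C_{\mathbf 0})\le r<1$, i.e. $\Phi\in\mathcal{GP}(C_{\mathbf 0})$. The one point to monitor is that the estimate is uniform in $\alpha$, which is guaranteed precisely by the hypothesis $\|g_\alpha\|\le r$.

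\textbf{Item (b).} Here the goal is to exhibit one element of $\mathcal{GP}(C_{\mathbf 0})$ that is not a composition homomorphism, and I would produce it from the construction of Theorem~\ref{Inyectar disco Minf} applied to the constant function $g\equiv0$. The hypothesis yields, by \cite[Corollary 2]{BoydRyan} or \cite[Proposition 1]{AronDimant}, an $m$-homogeneous polynomial $P$ (so $P(0)=0$) whose extension $\widetilde P$ is not weak-star continuous at $0$; this furnishes a bounded net $(x^{**}_\alpha)\subset B_{X^{**}}$, weak-star convergent to $0$, with $|\widetilde P(x^{**}_\alpha)|>\varepsilon$. Fixing an ultrafilter $\mathscr U$ as in Theorem~\ref{inyectar disco} and a scalar $t$ with $0<|t|<1$, I set
\[
\Phi_t(f)(y)=\lim_{\mathscr U}\widetilde f(t\,x^{**}_\alpha),\qquad f\in\H^\infty(B_X),\ y\in B_Y,
\]
which is a well-defined element of $\mathscr F(\mathbf 0)\subset\M_\infty(B_X,B_Y)$. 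Observe that $\Phi_t$ is a weak-star limit along $\mathscr U$ of the composition homomorphisms $C_{h_\alpha}$ attached to the constant functions $h_\alpha\equiv t\,x^{**}_\alpha$, which satisfy $\|h_\alpha\|\le|t|<1$. The Schwarz estimate of item~(a), applied along $\mathscr U$, then gives $\sigma(\Phi_t,C_{\mathbf 0})\le|t|<1$, so $\Phi_t\in\mathcal{GP}(C_{\mathbf 0})$.

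It remains to verify that $\Phi_t$ is not a composition homomorphism, and this is the main obstacle, resolved by combining the fiber constraint with the failure of weak-star continuity of $\widetilde P$. Since $\xi(\Phi_t)=\lim_{\mathscr U}t\,x^{**}_\alpha=0$, any identity $\Phi_t=C_h$ with $h\in B_{\H^\infty(B_Y,X^{**})}$ would force $h=\xi(C_h)=\xi(\Phi_t)=0$, hence $\Phi_t=C_{\mathbf 0}$ and in particular $\Phi_t(P)(0)=\widetilde P(0)=0$. On the other hand, the $m$-homogeneity of $P$ gives $\widetilde P(t\,x^{**}_\alpha)=t^m\widetilde P(x^{**}_\alpha)$, so
\[
\Phi_t(P)(0)=t^m\lim_{\mathscr U}\widetilde P(x^{**}_\alpha)=a_m\,t^m,\qquad |a_m|=\Big|\lim_{\mathscr U}\widetilde P(x^{**}_\alpha)\Big|\ge\varepsilon>0.
\]
For any $t\neq0$ this is nonzero, contradicting $\Phi_t(P)(0)=0$; hence $\Phi_t\neq C_{\mathbf 0}$, and by the fiber argument above $\Phi_t$ is not a composition homomorphism at all. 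As $\Phi_t\in\mathcal{GP}(C_{\mathbf 0})\setminus\{C_g:\ g\in B_{\H^\infty(B_Y,X^{**})}\}$, the inclusion from Proposition~\ref{basic 1}(a) is proper, which is exactly the vector-valued analogue of \cite[Corollary 1.3]{AronDimantLassalleMaestre}.
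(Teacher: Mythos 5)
Your proof is correct and follows essentially the same route as the paper: item (a) is the paper's own $\varepsilon$-argument via $\sigma(C_{g_\alpha},C_{\mathbf 0})=\|g_\alpha\|$ (Proposition \ref{basic 1}(a)) written as a direct passage to the weak-star limit, and item (b) is exactly the construction the paper invokes by citing Theorems \ref{Inyectar disco Minf} and \ref{inyectar disco} at $g\equiv 0$, with the same fiber argument ($\xi(\Phi_t)=0$ plus $\Phi_t(P)(0)\neq 0$) ruling out composition type --- indeed you supply details the paper leaves as a one-line sketch. The only point worth a word is that the discontinuity of $\widetilde P$ at $0$ a priori yields a bounded net rather than one in $B_{X^{**}}$, but rescaling by the bound $M$ and using $m$-homogeneity (replacing $\varepsilon$ by $\varepsilon/M^m$) fixes this trivially.
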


\begin{proof}
(a) Take any $f \in \H^\infty(B_X)$ such that $\|f\| = 1$ and $f(0) = 0$, and an element $y\in B_Y$.  By the weak-star convergence, for any fixed $\varepsilon>0$ such that $r + \varepsilon <1$ we can find $\alpha$ such that $|C_{g_\alpha} (f)(y) - \Phi(f)(y)| < \varepsilon.$
Then, \[|\Phi(f)(y) - C_{\mathbf 0}(f)(y)| \leq \varepsilon + |C_{\mathbf 0}(f)(y) - \Phi_{g_\alpha} (f)(y)|\le \varepsilon + \sigma(\Phi_{g_\alpha},C_{\mathbf 0})=\varepsilon + \|g_\alpha\| < \varepsilon + r.\] Thus, $\sigma(\Phi,C_{\mathbf 0}) < 1,$ which concludes the proof.

(b) Working as in Theorem \ref{Inyectar disco Minf} (which, in turn, refers to Theorem \ref{inyectar disco}) we can construct a net $(C_{g_\alpha})$, as in item (a), that is  weak-star convergent to a homomorphism $\Phi$ which is in the Gleason part of $C_{\mathbf 0}$ but it is not of composition type.
\end{proof}

Observe that the vector-valued spectrum $\M_\infty(B_X,B_Y)$ is a metric space when viewed as a subset of $\mathcal L(\H^\infty(B_X), \H^\infty(B_Y))$. Since its metric is given by $\|\Phi-\Psi\|$ we refer to it as the {\em Gleason metric}. The following proposition, which is a version of \cite[Proposition 1.6]{AronDimantLassalleMaestre}, gives conditions under which there is an isometry in the spectrum that maps each fiber onto another fiber.

\begin{proposition}
	Let $X,Y$ be Banach spaces and $\theta: B_X \to B_X$ be an automorphism. Then the mapping
	\begin{align*}
		\Lambda_{\theta}:\M_\infty(B_X,B_Y) &\to \M_\infty(B_X,B_Y)\\
		\Phi &\mapsto (f \mapsto \Phi(f\circ \theta))
	\end{align*}
	is an isometry with respect to the Gleason metric. Moreover, if $X$ is symmetrically regular and for every $x^* \in X^*$ both $x^*\circ \theta$ and $x^*\circ\theta^{-1}$ are uniform limits of finite type polynomials,  then for every $g \in \overline{B}_{\H^\infty(B_Y,X^{**})}$ we have that $\Lambda_{\theta}(\mathscr{F}(g)) = \mathscr{F}(\widetilde{\theta}\circ g).$
		\end{proposition}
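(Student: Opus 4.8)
The plan is to verify the two assertions in order: first that $\Lambda_\theta$ is a well-defined isometry for the Gleason metric, and then that under the additional hypotheses it permutes the fibers according to the formula $\Lambda_\theta(\mathscr{F}(g)) = \mathscr{F}(\widetilde{\theta}\circ g)$. First I would check that $\Lambda_\theta$ maps $\M_\infty(B_X,B_Y)$ into itself. Since $\theta$ is an automorphism of $B_X$, the composition operator $C_\theta\colon f \mapsto f\circ\theta$ is an isometric algebra automorphism of $\H^\infty(B_X)$ (it is multiplicative, and $\|f\circ\theta\| = \|f\|$ because $\theta$ is surjective onto $B_X$). Precomposing a homomorphism $\Phi$ with this automorphism yields again a nonzero algebra homomorphism, so $\Lambda_\theta(\Phi) \in \M_\infty(B_X,B_Y)$. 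The same reasoning applied to $\theta^{-1}$ shows $\Lambda_{\theta^{-1}}$ is a two-sided inverse, so $\Lambda_\theta$ is a bijection.

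For the isometry statement, I would compute directly. For any two homomorphisms $\Phi,\Psi$,
\[
\|\Lambda_\theta(\Phi) - \Lambda_\theta(\Psi)\| = \sup_{\|f\|\le 1}\|\Phi(f\circ\theta) - \Psi(f\circ\theta)\|_{B_Y}.
\]
Because $f\mapsto f\circ\theta$ is a surjective isometry of the unit ball of $\H^\infty(B_X)$, as $f$ ranges over the unit ball so does $f\circ\theta$, and hence the supremum equals $\sup_{\|h\|\le 1}\|\Phi(h)-\Psi(h)\|_{B_Y} = \|\Phi - \Psi\|$. This gives the isometry with respect to the Gleason metric immediately, and in particular $\Lambda_\theta$ preserves Gleason parts.

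The substantive part is the fiber identity. Fix $g \in \overline{B}_{\H^\infty(B_Y,X^{**})}$ and $\Phi\in\mathscr{F}(g)$; I want to show $\xi(\Lambda_\theta(\Phi)) = \widetilde\theta\circ g$. By definition $\xi(\Lambda_\theta(\Phi))(y)(x^*) = \Lambda_\theta(\Phi)(x^*)(y) = \Phi(x^*\circ\theta)(y)$. The hypothesis that $x^*\circ\theta$ is a uniform limit of finite type polynomials on $B_X$ is exactly what lets me replace $\Phi(x^*\circ\theta)(y)$ by the value of the canonical extension: since finite type polynomials are built from linear functionals, the homomorphism $\delta_y\circ\Phi$ acts on such limits through the point $g(y)=\xi(\Phi)(y)\in X^{**}$, yielding $\Phi(x^*\circ\theta)(y) = \widetilde{(x^*\circ\theta)}(g(y))$. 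It then remains to identify $\widetilde{(x^*\circ\theta)}(g(y))$ with $x^*(\widetilde\theta(g(y)))$, i.e. to check that the canonical (Aron--Berner) extension of $x^*\circ\theta$ evaluated at $g(y)$ agrees with $x^*$ applied to the extended automorphism $\widetilde\theta$ at $g(y)$; this is a compatibility property of the canonical extension with composition, valid on the relevant limits of finite type polynomials. Since $S_{X^*}$ is norming for $X^{**}$, holding for all $x^*$ gives $\xi(\Lambda_\theta(\Phi))(y) = \widetilde\theta(g(y))$, so $\Lambda_\theta(\Phi)\in\mathscr{F}(\widetilde\theta\circ g)$. The reverse inclusion $\mathscr{F}(\widetilde\theta\circ g)\subset\Lambda_\theta(\mathscr{F}(g))$ follows symmetrically by applying the same computation to $\theta^{-1}$, using the hypothesis on $x^*\circ\theta^{-1}$ and the fact that $\widetilde{\theta^{-1}} = (\widetilde\theta)^{-1}$ on $B_{X^{**}}$.

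The main obstacle I anticipate is the precise justification in the previous paragraph that $\Phi(x^*\circ\theta)(y)$ is governed solely by the point $g(y)$ and equals $x^*(\widetilde\theta(g(y)))$. This is where the finite-type approximation hypothesis, symmetric regularity of $X$, and the multiplicative, extension-compatible nature of the Aron--Berner map all genuinely enter: one must argue that on uniform limits of finite type polynomials the homomorphism $\delta_y\circ\Phi$ behaves like evaluation at $g(y)$ (which is essentially the content of the scalar-valued fiber description for finite-type-dense situations, applied to the subalgebra generated by the $x^*\circ\theta$), and that the canonical extension intertwines composition with $\theta$ in the expected way. Establishing this cleanly, rather than the formal isometry computation, is the crux of the proof.
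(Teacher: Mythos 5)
Your proposal is correct and follows essentially the same route as the paper: the isometry via the invertible change of variables $f\mapsto f\circ\theta$ in the supremum, the computation $\xi(\Lambda_\theta(\Phi))(y)(x^*)=\Phi(x^*\circ\theta)(y)=\widetilde{x^*\circ\theta}(g(y))=\widetilde{\theta}(g(y))(x^*)$ justified by finite-type approximation, and the reverse inclusion by the symmetric argument for $\theta^{-1}$ together with $\widetilde{\theta^{-1}}\circ\widetilde{\theta}=\mathrm{Id}$. The ``crux'' you flag is resolved in the paper exactly as you sketch it: the identity $\widetilde{x^*\circ\theta}(z)=\widetilde{\theta}(z)(x^*)$ is essentially the definition of the vector-valued extension (the uniform-limit hypothesis serving to extend $\widetilde{\theta}$ weak-star continuously to $\overline{B}_{X^{**}}$, where $g(y)$ may lie), while symmetric regularity enters only to guarantee $\widetilde{\theta^{-1}}\circ\widetilde{\theta}=\mathrm{Id}$, for which the paper cites \cite[Corollary 2.2]{Choi}.
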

		
		\begin{proof}
			For $\Phi$ and $\Psi$ in $\M_\infty(B_X,B_Y)$ we have that
				\begin{equation*}
					\| \Lambda_{\theta}(\Phi) - \Lambda_{\theta}(\Psi)\| = \sup_{\substack {f\in 		\H^\infty(B_X)\\\|f\| \le 1}} \|\Phi(f\circ \theta) - \Psi(f \circ \theta) \| \leq \|\Phi - \Psi\|.
				\end{equation*}
			Applying the same inequality to $\Lambda_{\theta^{-1}}$ and noting that $\Lambda_{\theta^{-1}} \circ \Lambda_{\theta} = Id$, we obtain the desired isometry.
			
			Assume now that $X$ is symmetrically regular and for every $x^* \in X^*$ we have that both $x^* \circ \theta$ and $x^* \circ \theta^{-1}$ lie in the closure of finite type polynomials. Fix $g \in \overline{B}_{\H^\infty(B_Y,X^{**})}$. For any $\Phi \in \mathscr{F}(g)$, $y \in B_Y$, $x^* \in X^*$ we have
				\begin{align*}
					\Lambda_{\theta}(\Phi)(x^*)(y) &= \Phi(x^*\circ \theta)(y).\\
					\intertext{Since $x^*\circ \theta$ is a uniform limit of finite type polynomials the same happens to $\widetilde{x^*\circ \theta}$, which implies that there is a unique extension of $\widetilde\theta$ to $\overline{B}_{X^{**}}$ through weak-star continuity. Thus, we can compute}
					\Phi(x^*\circ \theta)(y) &= \widetilde{x^* \circ \theta} (g(y))=\widetilde{\theta}(g(y))(x^*).		
				\end{align*}
			This means that  $\Lambda_{\theta}(\mathscr{F}(g))$ is contained in $\mathscr{F}(\widetilde{\theta} \circ g)$. Also, since $X$ is symmetrically regular, arguing as in the proof of \cite[Corollary 2.2]{Choi} we can see that
$\widetilde{\theta^{-1}}\circ \widetilde\theta=Id$,
 and so repeating the same argument as above for $\theta^{-1}$ instead of $\theta$ we obtain that
\[
\mathscr{F}(\widetilde{\theta} \circ g)=\Lambda_{\theta}\left[ \Lambda_{\theta^{-1}}(\mathscr{F}(\widetilde \theta\circ g))\right]\subset \Lambda_{\theta}(\mathscr{F}(\widetilde{\theta^{-1}}\circ\widetilde \theta\circ g))=\Lambda_{\theta}(\mathscr{F}( g)).
\]
Hence,  the desired equality between the fibers is proved.
		\end{proof}

Examples of automorphisms of the ball satisfying the conditions of the previous proposition are shown in \cite[Examples 1.7 and 1.8]{AronDimantLassalleMaestre}  for  $X=c_0$ and $X=\ell_2$. The conclusion about the fibers of the vector-valued spectrum then holds in these cases for any Banach space $Y$.

For the scalar-valued spectrum of a uniform algebra it is known that the image of an open convex set through an analytic injection is contained in a single Gleason part (see, for instance, \cite[Lemma 2.1]{Hoffman} or \cite[Proposition 3.4]{AronDimantLassalleMaestre}). By Lemma \ref{holomorphic mapping} the same result is valid for the vector-valued spectrum.

\begin{proposition}
Given Banach spaces $X$, $Y$ and $Z$ and an open convex set $U\subset Z$, let $F:U\to \M_\infty(B_X,B_Y)$ be an analytic injection. Then, $F(U)$ is contained in a single Gleason part.
\end{proposition}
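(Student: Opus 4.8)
The plan is to prove the statement by showing directly that every pair of points of $F(U)$ lies in a single Gleason part; since ``belonging to the same Gleason part'' is an equivalence relation on $\M_\infty(B_X,B_Y)$, it suffices to verify that $\sigma(F(u_0),F(u_1))<1$ for all $u_0,u_1\in U$. The delicate point is that $\sigma$ is a supremum over $y\in B_Y$ of the scalar pseudo-hyperbolic distances $\rho(\delta_y\circ F(u_0),\delta_y\circ F(u_1))$: applying the scalar-valued result separately to each $y$ would only give each of these quantities strictly below $1$, and a supremum of numbers below $1$ may equal $1$. So the crux is to bound $\rho(\delta_y\circ F(u_0),\delta_y\circ F(u_1))$ \emph{uniformly in $y$}, which is exactly what a Schwarz--Pick estimate along one fixed analytic disk provides, the bound depending only on $u_0$ and $u_1$.

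First I would reduce to an analytic disk joining the two points. Fixing $u_0\neq u_1$ in $U$ (the case $u_0=u_1$ being trivial), convexity makes $\Omega=\{\zeta\in\mathbb{C}:\ u_0+\zeta(u_1-u_0)\in U\}$ an open convex subset of $\mathbb{C}$ containing $0$ and $1$; composing the affine parametrization $\zeta\mapsto u_0+\zeta(u_1-u_0)$ with a Riemann map $R:\D\to\Omega$ normalized by $R(0)=0$ produces a holomorphic $\phi:\D\to U$ with $\phi(0)=u_0$ and $\phi(a)=u_1$, where $a=R^{-1}(1)\in\D$. (If $\Omega=\mathbb{C}$, Liouville's theorem forces $F$ to be constant on the complex line through $u_0,u_1$, so $F(u_0)=F(u_1)$ and there is nothing to prove.) The composition $F\circ\phi:\D\to\M_\infty(B_X,B_Y)$ is then holomorphic, and the number $a$ depends only on $u_0,u_1$ and $U$.

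Next I would run the Schwarz estimate evaluation by evaluation. Fix $y\in B_Y$ and set $\psi=\delta_y\circ F(u_1)\in\M_\infty(B_X)$. For any $g\in\H^\infty(B_X)$ with $\|g\|\le1$ and $\psi(g)=0$, consider the scalar function
\[
\eta(\zeta)=F(\phi(\zeta))(g)(y),\qquad \zeta\in\D.
\]
Since $F$ is analytic, $u\mapsto F(u)(g)$ is holomorphic into $\H^\infty(B_Y)$, and by Lemma \ref{holomorphic mapping} this makes $u\mapsto F(u)(g)(y)$ holomorphic for each $y$; composing with the holomorphic $\phi$ shows that $\eta$ is holomorphic on $\D$. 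Moreover $|\eta(\zeta)|\le\|F(\phi(\zeta))(g)\|_{B_Y}\le\|g\|\le1$, because each $F(\phi(\zeta))$ is a homomorphism in the unit sphere of $\mathcal{L}(\H^\infty(B_X),\H^\infty(B_Y))$. As $\eta(a)=\psi(g)=0$, the Schwarz--Pick lemma gives $\bigl|\tfrac{\eta(0)-\eta(a)}{1-\overline{\eta(a)}\,\eta(0)}\bigr|\le|a|$, that is $|F(u_0)(g)(y)|=|\eta(0)|\le|a|$. Taking the supremum over all such $g$ and recalling the definition of $\rho$ yields $\rho(\delta_y\circ F(u_0),\delta_y\circ F(u_1))\le|a|$.

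Finally, since this estimate does not involve $y$, taking the supremum over $y\in B_Y$ gives $\sigma(F(u_0),F(u_1))\le|a|<1$, so $F(u_0)$ and $F(u_1)$ lie in the same Gleason part; as $u_0,u_1\in U$ were arbitrary, $F(U)$ is contained in a single Gleason part. I expect the only genuinely delicate issue to be the uniformity in $y$, and the whole point of passing to one fixed analytic disk $\phi$ \emph{before} fixing $y$ is to extract a bound $|a|$ that is independent of $y$; the holomorphy bookkeeping (handled by Lemma \ref{holomorphic mapping}) and the construction of $\phi$ are routine.
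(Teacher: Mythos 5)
Your proof is correct and follows essentially the same route as the paper, which gives no written argument but simply cites the scalar-valued Schwarz--Pick proofs (Hoffman, Aron--Dimant--Lassalle--Maestre) together with Lemma \ref{holomorphic mapping}: your write-up is exactly that argument carried out evaluation by evaluation, parametrizing the segment through a Riemann map and extracting the bound $|a|$ before fixing $y$. Your explicit observation that the Schwarz--Pick bound is uniform in $y\in B_Y$ --- giving $\sigma(F(u_0),F(u_1))\le |a|<1$ rather than merely $\rho(\delta_y\circ F(u_0),\delta_y\circ F(u_1))<1$ for each separate $y$ --- is precisely the point the paper's one-line reduction to the scalar case leaves implicit.
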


This proposition combined with some of the results of Section \ref{Section-Minf} provides examples of situations where a ball is contained in the intersection of a fiber and a Gleason part. More precisely we have:

\begin{itemize}
\item If $x_0^{**}\in S_{X^{**}}$ is a norm attaining element and $g(y)=x_0^{**}$, for all $y\in B_Y$, consider the analytic injection $\Psi$ of Proposition \ref{FibrasS_X}. Then, $\Psi(B_{\H^\infty(B_Y)})$ is contained in the intersection of a Gleason part and $\mathscr{F}(g)$.
\item If there exists a polynomial on $X$ which is not weakly continuous on bounded sets, by Theorem \ref{Inyectar disco Minf}, for each $g \in B_{\H^\infty(B_Y,X^{**})}$ there is a copy of the complex disk $\mathbb{D}$ in the intersection of a Gleason part and $\mathscr{F}(g)$.
\item If there exists a polynomial on $X$ which is not weakly continuous on bounded sets, by Theorem \ref{Inyectar bola Minf}, for each constant function $g \in B_{\H^\infty(B_Y,X^{**})}$ there is a copy of the unit ball $B_{\H^\infty(B_Y)}$ in the intersection of a Gleason part and $\mathscr{F}(g)$.
\end{itemize}

The above examples and Proposition \ref{basic 1} (a) show situations in which Gleason parts contain balls; so we can say that they are {\em large} Gleason parts. Yet, in this spectrum, there also exist singleton Gleason parts. On the one hand, it is easily seen that any singleton Gleason part of the scalar-valued spectrum $\M_\infty(B_X)$ is also a singleton Gleason part of $\M_\infty(B_X, B_Y)$. On the other hand, singleton Gleason parts which are not in fibers over constant functions surely exist in $\M_\infty(B_X, B_X)$. For instance, the identity mapping is a singleton Gleason part. This was addressed in \cite{chu-hugli-mackey} showing that no composition operator might be in the same Gleason part, answering a conjecture stated in  \cite{AronGalindoLindstrom}, where a particular case was studied. A complete proof of the statement can be found in \cite{GalindoGamelinLindstrom}. Also, if $g:B_Y\to B_X$ is biholomorphic then the Gleason part containing $ C_g\in \M_\infty(B_X, B_Y)$ is a singleton. Indeed, it is readily seen that the mapping from $\mathcal{M_\infty}(B_Y) \to \mathcal{M}_\infty(B_X)$ given by $[\varphi \mapsto \varphi \circ C_g]$ maps strong boundary points to strong boundary points. The result then follows from \cite[Theorem 6.2]{GalindoGamelinLindstrom}.

\begin{example} \textbf{Relationship between fibers and Gleason parts.}\\
\rm The case $B_X=\D$ allows us to show how the relationship between fibers and Gleason parts is different whether we consider interior, middle or edge fibers.
We take into account the description of the fibers of the spectrum $\M_\infty(\D, B_Y)$ made in Theorem \ref{propDisk} along with what we know from Proposition \ref{basic 1} about Gleason parts. \\
\begin{itemize}
	\item Interior fibers of $\M_\infty(\D, B_Y)$ only contain the corresponding composition homomorphism. Then,  $\mathcal{GP}(C_{\mathbf 0})=\{ C_g:\ g\in B_{\H^\infty(B_Y)}\}$, so there is only one Gleason part through all the interior fibers.
	\item For edge fibers, take $\lambda\not=\mu$ with $|\lambda|=|\mu|=1$ and $\Phi\in\F(g)$, $\Psi\in\F(h)$, where $g(y)= \lambda$ and $h(y)= \mu$, for all $y\in B_Y$. Then, for any $y$, we have that $\delta_y\circ \Phi$ and $\delta_y\circ \Psi$ are homomorphisms in the scalar-valued spectrum $\M_\infty(\D)$ belonging to the fibers over $\lambda$ and $\mu$, respectively. Thus, it is known  that they belong to different Gleason parts. Hence, $1=\rho(\delta_y\circ \Phi, \delta_y\circ \Psi)\le \sigma(\Phi, \Psi)$ and so $\mathcal{GP}(\Phi)\not= \mathcal{GP}(\Psi)$.  Therefore,  no Gleason part could have elements from different edge fibers.
\end{itemize}
	The transition between one Gleason part containing all the fibers (interior case) and Gleason parts inside the fibers (edge case) is made by the middle fibers:
\begin{itemize}
	\item Any middle fiber only contains the corresponding composition homomorphism, yet several (but not all) middle fibers might belong to the same Gleason part.
\end{itemize}

We show this last situation with the following example, which is partially adapted from \cite[Example 2]{MacCluerOhnoZhao}.
For $y^*\in S_{X^*}$, consider the functions $g(y)=y^*(y)$, $h(y)=\frac{y^*(y)+1}{2}$ and $i(y)=\frac{y^*(y)+1}{2}+k(y^*(y)-1)^2$, for every $y\in B_Y$, where $0<k<\frac18$. They are all {\em middle} functions in $\H^\infty(B_Y)$ and the composition homomorphisms associated to them satisfy  $\mathcal{GP}(C_g)\not= \mathcal{GP}(C_h)=\mathcal{GP}(C_i)$. Indeed, take a sequence $(y_n)$ in $B_Y$ such that $y^*(y_n)\to -1$. Then
\[
\rho(\delta_{y_n}\circ C_g, \delta_{y_n}\circ C_h)= \rho(\delta_{g(y_n)},\delta_{h(y_n)})=\left|\frac{g(y_n)-h(y_n)}{1-\overline {g(y_n)} h(y_n)}\right| \to 1.
\] This implies that $\mathcal{GP}(C_g)\not= \mathcal{GP}(C_h)$. On the other hand we have
\begin{eqnarray*}
\sigma(C_h, C_i) & =&\sup_{y\in B_Y}\rho(\delta_{y}\circ C_h, \delta_{y}\circ C_i)= \sup_{y\in B_Y} \rho(\delta_{h(y)},\delta_{i(y)})\\
&=& \sup_{y\in B_Y} \left|\frac{h(y)-i(y)}{1-\overline {h(y)} i(y)}\right| = \sup_{y\in B_Y} \left|\frac{k(y^*(y)-1)^2}{1- \frac{\overline{y^*(y)}+1}{2} (\frac{y^*(y)+1}{2}+k(y^*(y)-1)^2)}\right|\\
&=& \sup_{z\in\D} \left|\frac{k(z-1)^2}{1- \frac{\overline{z}+1}{2} (\frac{z+1}{2}+k(z-1)^2)}\right| <1,
\end{eqnarray*} where the last inequality is proved in \cite[Example 2]{MacCluerOhnoZhao}.
Therefore, $\mathcal{GP}(C_h)= \mathcal{GP}(C_i)$.
\end{example}

\textbf{Acknowledgements.} We would like to thank Daniel Carando for helpful conversations.

\providecommand{\WileyBibTextsc}{}
\let\textsc\WileyBibTextsc
\providecommand{\othercit}{}
\providecommand{\jr}[1]{#1}
\providecommand{\etal}{~et~al.}


\begin{thebibliography}{[10]}

\bibitem{AronBerner}% article
\textsc{R.\,M. Aron} and  \textsc{P.\,D. Berner},
A Hahn-Banach extension theorem for analytic mappings,
\jr{Bull. Soc. Math. France} \textbf{106}, 3--24
(1978).

\bibitem{AronColeGamelin}% article
\textsc{R.\,M. Aron},  \textsc{B.\,J. Cole},  and  \textsc{T.\,W. Gamelin},
Spectra of algebras of analytic functions on a {B}anach space,
\jr{J. Reine Angew. Math.} \textbf{415}, 51--93 (1991).

\bibitem{AronDimant}% article
\textsc{R.\,M. Aron} and  \textsc{V.~Dimant},
Sets of weak sequential continuity for polynomials,
\jr{Indag. Math. (N.S.)} \textbf{13}(3), 287--299 (2002).

\bibitem{AronDimantLassalleMaestre}% article
\textsc{R.~M.~Aron},  \textsc{V.~Dimant},  \textsc{S.~Lassalle},  and
\textsc{M.~Maestre},
Gleason parts for algebras of holomorphic functions in infinite dimensions,
\jr{preprint arXiv:1902.01735} (2019).

\bibitem{AronFalcoGarciaMaestre}% article
\textsc{R.\,M. Aron},  \textsc{J.~Falc\'{o}},  \textsc{D.~Garc\'{\i}a},  and
\textsc{M.~Maestre},
Analytic structure in fibers,
\jr{Studia Math.} \textbf{240}(2), 101--121 (2018).

\bibitem{AronGalindoGarciaMaestre}% article
\textsc{R.\,M. Aron},  \textsc{P.~Galindo},  \textsc{D.~Garc\'{\i}a},  and
\textsc{M.~Maestre},
Regularity and algebras of analytic functions in infinite dimensions,
\jr{Trans. Amer. Math. Soc.} \textbf{348}(2), 543--559 (1996).

\bibitem{AronGalindoLindstrom}% article
\textsc{R.~Aron},  \textsc{P.~Galindo},  and  \textsc{M.~Lindstr\"{o}m},
Connected components in the space of composition operators in {$H^\infty$}
functions of many variables,
\jr{Integral Equations Operator Theory} \textbf{45}(1), 1--14 (2003).

\othercit
\bibitem{Bear}% book
\textsc{H.\,S. Bear},
Lectures on {G}leason parts, Lecture Notes in Mathematics, Vol. 121
(Springer-Verlag, Berlin-New York, 1970).

\bibitem{BoydRyan}% article
\textsc{C.~Boyd} and  \textsc{R.\,A. Ryan},
Bounded weak continuity of homogeneous polynomials at the origin,
\jr{Arch. Math. (Basel)} \textbf{71}(3), 211--218 (1998).

\othercit
\bibitem{carlsson}% phdthesis
\textsc{L.~Carlsson},
Ideals and boundaries in Algebras of Holomorphic functions,
Doctoral dissertation, Ume\r{a} universitet, 2006.

\othercit
\bibitem{Chae}% book
\textsc{S.\,B. Chae},
Holomorphy and calculus in normed spaces, Monographs and Textbooks in Pure and
Applied Mathematics,  Vol.\,92 (Marcel Dekker, Inc., New York, 1985),
With an appendix by Angus E. Taylor.

\bibitem{Choi}% article
\textsc{Y.\,S. Choi},  \textsc{D.~Garc\'{\i}a},  \textsc{S.\,G. Kim},  and
\textsc{M.~Maestre},
Composition, numerical range and {A}ron-{B}erner extension,
\jr{Math. Scand.} \textbf{103}(1), 97--110 (2008).

\bibitem{chu-hugli-mackey}% article
\textsc{C.\,H. Chu},  \textsc{R.\,V. H\"{u}gli},  and  \textsc{M.~Mackey},
The identity is isolated among composition operators,
\jr{Proc. Amer. Math. Soc.} \textbf{132}(11), 3305--3308 (2004).

\bibitem{ColeGamelinJohnson}% article
\textsc{B.\,J. Cole},  \textsc{T.\,W. Gamelin},  and  \textsc{W.\,B. Johnson},
Analytic disks in fibers over the unit ball of a {B}anach space,
\jr{Michigan Math. J.} \textbf{39}(3), 551--569 (1992).

\bibitem{DavieGamelin}% article
\textsc{A.\,M. Davie} and  \textsc{T.\,W. Gamelin},
A theorem on polynomial-star approximation,
\jr{Proc. Amer. Math. Soc.} \textbf{106}(2), 351--356 (1989).

\bibitem{Deghoul}% article
\textsc{D.~Deghoul},
Construction de caract\`eres exceptionnels sur une alg\`ebre de {F}r\'{e}chet,
\jr{C. R. Acad. Sci. Paris S\'{e}r. I Math.} \textbf{312}(8), 579--580 (1991).

\bibitem{DiGaMaSe}% article
\textsc{V.~Dimant},  \textsc{D.~Garc\'{\i}a},  \textsc{M.~Maestre},  and
\textsc{P.~Sevilla-Peris},
Homomorphisms between algebras of holomorphic functions,
\jr{Abstr. Appl. Anal.} pp.\,Art. ID 612304, 12 (2014).

\bibitem{DimantSinger2}% article
\textsc{V.~Dimant} and  \textsc{J.~Singer},
Homomorphisms between algebras of holomorphic functions on the infinite
polydisk,
\jr{Work in progress}.

\othercit
\bibitem{Dineen}% book
\textsc{S.~Dineen},
Complex analysis on infinite-dimensional spaces, Springer Monographs in
Mathematics (Springer-Verlag London, Ltd., London, 1999).

\bibitem{Farmer}% article
\textsc{J.\,D. Farmer},
Fibers over the sphere of a uniformly convex {B}anach space,
\jr{Michigan Math. J.} \textbf{45}(2), 211--226 (1998).

\bibitem{GalindoGamelinLindstrom}% article
\textsc{P.~Galindo},  \textsc{T.\,W. Gamelin},  and
\textsc{M.~Lindstr\"{o}m},
Composition operators on uniform algebras, essential norms, and hyperbolically
bounded sets,
\jr{Trans. Amer. Math. Soc.} \textbf{359}(5), 2109--2121 (2007).

\bibitem{GGL}% article
\textsc{P.~Galindo},  \textsc{T.\,W. Gamelin},  and
\textsc{M.~Lindstr\"{o}m},
Fredholm composition operators on algebras of analytic functions on {B}anach
spaces,
\jr{J. Funct. Anal.} \textbf{258}(5), 1504--1512 (2010).

\bibitem{GalindoLindstrom}% article
\textsc{P.~Galindo} and  \textsc{M.~Lindstr\"{o}m},
Gleason parts and weakly compact homomorphisms between uniform {B}anach
algebras,
\jr{Monatsh. Math.} \textbf{128}(2), 89--97 (1999).

\othercit
\bibitem{Gamelin}% incollection
\textsc{T.\,W. Gamelin},
Homomorphisms of uniform algebras,
in: Recent progress in functional analysis ({V}alencia, 2000),  North-Holland
Math. Stud.,  Vol.\,189 (North-Holland, Amsterdam, 2001),  pp.\,95--105.

\bibitem{Gorkin}% article
\textsc{P.~Gorkin},
Gleason parts and {COP},
\jr{J. Funct. Anal.} \textbf{83}(1), 44--49 (1989).

\bibitem{GorkinMortini}% article
\textsc{P.~Gorkin} and  \textsc{R.~Mortini},
Norms and essential norms of linear combinations of endomorphisms,
\jr{Trans. Amer. Math. Soc.} \textbf{358}(2), 553--571 (2006).

\othercit
\bibitem{GorkinMortiniSuarez}% incollection
\textsc{P.~Gorkin},  \textsc{R.~Mortini},  and  \textsc{D.~Su\'{a}rez},
Homotopic composition operators on {$H^\infty(B^n)$},
in: Function spaces ({E}dwardsville, {IL}, 2002),  Contemp. Math.,  Vol.\,328
(Amer. Math. Soc., Providence, RI, 2003),  pp.\,177--188.

\othercit
\bibitem{lemmers}% phdthesis
\textsc{O.~Lemmers},
On the Gleason problem,
Doctoral dissertation, Universiteit van Amsterdam, 2002.

\bibitem{Hoffman}% article
\textsc{K.~Hoffman},
Bounded analytic functions and {G}leason parts,
\jr{Ann. of Math. (2)} \textbf{86}, 74--111 (1967).

\bibitem{HozokawaIzuchiZheng}% article
\textsc{T.~Hosokawa},  \textsc{K.~Izuchi},  and  \textsc{D.~Zheng},
Isolated points and essential components of composition operators on
{$H^\infty$},
\jr{Proc. Amer. Math. Soc.} \textbf{130}(6), 1765--1773 (2002).

\bibitem{MacCluerOhnoZhao}% article
\textsc{B.~MacCluer},  \textsc{S.~Ohno},  and  \textsc{R.~Zhao},
Topological structure of the space of composition operators on {$H^\infty$},
\jr{Integral Equations Operator Theory} \textbf{40}(4), 481--494 (2001).

\othercit
\bibitem{mortini}% incollection
\textsc{R.~Mortini},
Gleason parts and prime ideals in $H^\infty$,
in: Linear and complex analysis. {P}roblem book 3, edited by V.\,P. Havin and
N.\,K. Nikolski, Lecture Notes in Mathematics Vol.\,1574 (Springer-Verlag,
Berlin, 1994),  pp.\,136--138.

\othercit
\bibitem{MujicaLibro}% book
\textsc{J.~Mujica},
Complex analysis in {B}anach spaces, North-Holland Mathematics Studies,
Vol.\,120 (North-Holland Publishing Co., Amsterdam, 1986),
Holomorphic functions and domains of holomorphy in finite and infinite
dimensions, Notas de Matem\'{a}tica [Mathematical Notes], 107.

\bibitem{Mujica}% article
\textsc{J.~Mujica},
Linearization of bounded holomorphic mappings on {B}anach spaces,
\jr{Trans. Amer. Math. Soc.} \textbf{324}(2), 867--887 (1991).

\othercit
\bibitem{Rudin-FunctionTheoryCn}% book
\textsc{W.~Rudin},
Function theory in the unit ball of {$\Bbb C^n$}, Classics in Mathematics
(Springer-Verlag, Berlin, 2008),
Reprint of the 1980 edition.

\bibitem{Suarez}% article
\textsc{D.~Su\'{a}rez},
Maximal {G}leason parts for {$H^\infty$},
\jr{Michigan Math. J.} \textbf{45}(1), 55--72 (1998).


\end{thebibliography}
\end{document}